\documentclass[11pt]{article}

\usepackage{amsmath}
\usepackage{amssymb}
\usepackage{amsthm}

\usepackage{titlesec}
\usepackage{enumitem}
\usepackage{cases}

\newtheorem{thm}{Theorem}[section]
\newtheorem{lemma}[thm]{Lemma}
\newtheorem{prop}[thm]{Proposition}
\newtheorem{conj}[thm]{Conjecture}
\newtheorem{cor}[thm]{Corollary}
\theoremstyle{definition}
\newtheorem{defin}[thm]{Definition}

\title{\normalsize{\uppercase{\bf{Nodal intersections for random waves against a segment on the 3-dimensional torus}}}}
\author{\small{\uppercase{Riccardo W. Maffucci}\footnote{King's College London, Strand, London WC2R 2LS, England, United Kingdom.\newline riccardo.maffucci@kcl.ac.uk}}}
\date{}

\begin{document}
\titleformat{\section}
  {\normalfont\scshape\centering\bf}{\thesection}{1em}{}
\titleformat{\subsection}
  {\normalfont\scshape\bf}{\thesubsection}{1em}{}
\numberwithin{equation}{section}
\maketitle

\begin{abstract}
We consider random Gaussian eigenfunctions of the Laplacian on the three-dimensional flat torus, and investigate the number of nodal intersections against a straight line segment. The expected intersection number, against any smooth curve, is universally proportional to the length of the reference curve, times the wavenumber, independent of the geometry. We found an upper bound for the nodal intersections variance, depending on the arithmetic properties of the straight line. The considerations made establish a close relation between this problem and the theory of lattice points on spheres.
\end{abstract}
{\bf Keywords:} nodal intersections, arithmetic random waves, Gaussian eigenfunctions, lattice points on spheres.
\\
{\bf MSC(2010):} 11P21, 60G15.

\section{Introduction}
\subsection{Nodal intersections and lattice points on spheres}
On the three-dimensional flat torus $\mathbb{T}^3:=\mathbb{R}^3/\mathbb{Z}^3$ consider a real-valued eigenfunction of the Laplacian $F:\mathbb{T}^3\to\mathbb{R}$, with eigenvalue $\lambda^2$:
\begin{equation*}
(\Delta+\lambda^2) F=0.
\end{equation*}
The nodal set of $F$ is the zero locus
\begin{equation*}
\{x\in\mathbb{T}^3 : F(x)=0\},
\end{equation*}
consisting of a union of smooth surfaces, possibly together with a set of lower dimension, i.e. curves and points (cf. \cite{cheng1}, \cite{rudwi2}).
\\
Let $\mathcal{C}\subset \mathbb{T}^3$ be a fixed straight line segment on the torus, of length $L$, parametrised by $\gamma(t)=t\alpha=t(\alpha_1,\alpha_2,\alpha_3)$, with $0\leq t\leq L$, $\alpha\in\mathbb{R}^3$ and $|\alpha|=1$.
\\
We want to study the number of nodal intersections
\begin{equation}
\label{Zdet}
\#\{x\in\mathbb{T}^3 : F(x)=0\} \cap \mathcal{C},
\end{equation}
i.e., the number of zeros of $F$ on $\mathcal{C}$, as $\lambda\to\infty$.

The Laplace eigenvalues (``energy levels") on $\mathbb{T}^3$ are $\lambda^2=4\pi^2 m$, where $m$ is a natural number expressible as a sum of three integer squares. Let
\begin{equation}
\label{E}
\mathcal{E}=\mathcal{E}(m):=\{\mu=(\mu_1,\mu_2,\mu_3)\in\mathbb{Z}^3 : \mu_1^2+\mu_2^2+\mu_3^2=m\}
\end{equation}
be the set of all lattice points on the sphere of radius $\sqrt{m}$. Their cardinality equals $r_3(m)$, the number of ways that $m$ can be written as a sum of three squares, and will be denoted
\begin{equation*}
N=N_m:=\#\mathcal{E}=r_3(m)
\end{equation*}
(see Section \ref{seclp}); it is also the dimension of the eigenspace relative to the eigenvalue $4\pi^2 m$.
The eigenspace admits the $L^2$-orthonormal basis $\{e^{2\pi i\langle\mu,x\rangle}\}_{\mu\in\mathcal{E}}$, a general form of (complex-valued) eigenfunctions being
\begin{equation*}
F(x)=
\sum_{\mu\in\mathcal{E}}
c_{\mu}
e^{2\pi i\langle\mu,x\rangle},
\end{equation*}
with $c_{\mu}\in\mathbb{C}$ Fourier coefficients. We will henceforth consider only real-valued eigenfunctions.

\subsection{Arithmetic random waves}
One cannot expect to have any deterministic lower or upper bounds for the number of nodal intersections \eqref{Zdet}. Indeed, \cite[Examples 1.1, 1.2]{ruwiye} gives sequences of eigenfunctions $F$ and curves $\mathcal{C}$ where $\mathcal{C}$ is contained in the nodal set for arbitrarily high energy,
and planar curves with no nodal intersections at all, $m$ arbitrarily large. Let us then consider the {\em random} Gaussian toral eigenfunctions (`arithmetic random waves' \cite{orruwi}, \cite{rudwi2}, \cite{krkuwi})
\begin{equation}
\label{arw}
F(x)=\frac{1}{\sqrt{N}}
\sum_{(\mu_1,\mu_2,\mu_3)\in\mathcal{E}}
a_{\mu}
e^{2\pi i\langle\mu,x\rangle}
\end{equation}
with eigenvalue $\lambda^2=4\pi^2m$, where $a_{\mu}$ are complex standard Gaussian random variables\footnote{defined on some probability space $(\Omega,\mathcal{F},\mathbb{P})$, where $\mathbb{E}$ denotes
the expectation with respect to $\mathbb{P}$.} ($\mathbb{E}(a_{\mu})=0$ and $\mathbb{E}(|a_{\mu}|^2)=1$), independent save for the relations $a_{-\mu}=\overline{a_{\mu}}$ (so that $F(x)$ is real valued).

{\em Notation.} For functions $f$ and $g$, we will use $f=O(g)$ or $f \ll g$ interchangeably to denote the inequality $f \leq C g$ for some constant $C$. We write $f\ll_a g$ to emphasize the dependence of $C$ on the parameter $a$. The statement $f\asymp g$ means $g\ll f\ll g$.

Given a toral curve $\mathcal{C}$, we are interested in the distribution of the number of nodal intersections
\begin{equation}
\label{Z}
\mathcal{Z}=\mathcal{Z}(F):=\#\{x : F(x)=0\} \cap \mathcal{C}
\end{equation}
for an arithmetic random wave $F$. Rudnick, Wigman and Yesha \cite{ruwiye} computed the expectation to be, for any smooth curve of length $L$ on $\mathbb{T}^3$,
\begin{equation}
\label{expectation}
\mathbb{E}[\mathcal{Z}]=L\frac{2}{\sqrt{3}}\cdot\sqrt{m}.
\end{equation}
Moreover, they bounded the variance of $\mathcal{Z}$ for curves with nowhere zero curvature, assuming that $\mathcal{C}$ either has nowhere vanishing torsion or is planar:
\begin{equation*}
\text{Var}\left(\frac{\mathcal{Z}}{\sqrt{m}}\right)
\ll
\frac{1}{m^\delta}
\end{equation*}
for $m\not\equiv 0,4,7 \pmod 8$, where we may take $\delta=\frac{1}{3}$ in the case of nowhere vanishing torsion, and any $\delta<\frac{1}{4}$ for planar curves. It follows that
for all $\epsilon>0$, as $m\to\infty$, $m\not\equiv 0,4,7 \pmod 8$, the number of nodal intersections satisfies
\begin{equation}
\label{distrasymean}
\lim_{\substack{m\to\infty \\ m\not\equiv 0,4,7 \pmod 8}}\mathbb{P}\left(\left|\frac{\mathcal{Z}(F)}{\sqrt{m}}-\frac{2}{\sqrt{3}}L\right|>\epsilon\right)=0.
\end{equation}
Note that the condition $m\not\equiv 0,4,7 \pmod 8$ is natural (cf. \cite[Section 1.3]{ruwiye}): indeed, if $m\equiv 7 \pmod 8$, the set of lattice points $\mathcal{E}(m)$ \eqref{E} is empty; moreover,
\begin{equation*}
\mathcal{E}(4m)=\{2\mu : \mu\in\mathcal{E}(m)\}
\end{equation*}
(see e.g. \cite[\S 20]{harwri}), hence it suffices to consider energies $m$ up to multiples of $4$.


\subsection{Statements of the main results}
Our purpose is to investigate the nodal intersections number $\mathcal{Z}$ \eqref{Z} for {\em straight} line segments
\begin{equation}
\label{C}
\mathcal{C}: \gamma(t)=t(\alpha_1,\alpha_2,\alpha_3),
\quad
0\leq t\leq L, \quad \alpha\in\mathbb{R}^3, \quad |\alpha|=1,
\end{equation}
the other extreme of the nowhere zero curvature setting. Recall that the expected value of $\mathcal{Z}$ is given by \eqref{expectation}. In a moment we will give upper bounds for the variance, depending on whether the straight line $\mathcal{C}$ is `rational'. Given $\mathcal{C}$ as in \eqref{C}, at least one of the $\alpha_i$, say $\alpha_1$, is non-zero: we call $\alpha$ a {\em `rational vector'} if
\begin{equation*}
{\alpha_2}/{\alpha_1}\in\mathbb{Q} \quad\text{and}\quad {\alpha_3}/{\alpha_1}\in\mathbb{Q};
\end{equation*}
otherwise, we call $\alpha$ an {\em `irrational vector'}. Accordingly, we say that $\mathcal{C}$ is a {\em `rational/irrational line'}.

Let us generalise the setting, and introduce arithmetic random waves in any dimension $d\geq 1$. We denote
\begin{equation}
\label{lpsetd}
_d\mathcal{E}(m):=\{\mu\in\mathbb{Z}^d : |\mu|^2=m\}
\end{equation}
the set of all lattice points on $\sqrt{m}\mathcal{S}^{d-1}$, and $_dN_m=|{_d\mathcal{E}(m)}|$. We omit the index $d$ when $d=3$, as we deal mostly with the $3$-dimensional setting. On $\mathbb{T}^d$, we define the arithmetic random waves $F:\mathbb{T}^d\to\mathbb{R}$,
\begin{equation*}
F(x)=\frac{1}{\sqrt{_dN_m}}
\sum_{\mu\in{_d\mathcal{E}(m)}}
a_{\mu}
e^{2\pi i\langle\mu,x\rangle},
\end{equation*}
where $m$ is expressible as the sum of $d$ integer squares, and $a_{\mu}$ are complex standard Gaussian random variables, independent save for the relations $a_{-\mu}=\overline{a_{\mu}}$, which make $F(x)$ real-valued. Here and elsewhere we will denote
\begin{equation*}
R:=\sqrt{m},
\end{equation*}
and $R\mathcal{S}^{d-1}$ the $d-1$-dimensional sphere of radius $R$.
\begin{defin}[{\cite[Section 2.3]{brgafa}}]
\label{kappa}
Let $\kappa_d(R)$ be the maximal number of lattice points in the intersection of $R\mathcal{S}^{d-1}\subset\mathbb{R}^d$ and any hyperplane $\Pi$:
\begin{equation*}
\kappa_d(R)=\max_{\Pi
} \#\{\mu\in\mathbb{Z}^d : \mu\in R\mathcal{S}^{d-1}\cap\Pi\}.
\end{equation*}
\end{defin}
We denote $\kappa(R):=\kappa_3(R)$, as we will mostly be concerned with $d=3$. Jarnik (see \cite{jarnik}, or \cite[(2.6)]{brgafa}) proved the upper bound
\begin{equation}
\label{kappa3bound}
\kappa(R)\ll R^\epsilon, \quad \forall\epsilon>0.
\end{equation}
\begin{thm}
\label{resultrat}
Let the straight line segment $\mathcal{C}\subset\mathbb{T}^3$ be parametrised by
$\gamma(t)=t\alpha$, where 
$\alpha$ is a norm one rational vector. Then the nodal intersections variance has the upper bound
\begin{equation*}
\text{Var}\left(\frac{\mathcal{Z}}{\sqrt{m}}\right)\ll\frac{\kappa(\sqrt{m})}{N},
\end{equation*}
the implied constant depending only on $\alpha$.
\end{thm}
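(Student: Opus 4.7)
The plan is to restrict $F$ to the line and exploit the periodicity of the resulting covariance under the rationality hypothesis, after a Kac--Rice/Wiener chaos reduction. Let $f(t) := F(t\alpha)$, a centred stationary real Gaussian process on $[0,L]$ with covariance
\begin{equation*}
r_f(t) = \frac{1}{N}\sum_{\mu\in\mathcal{E}} \cos(2\pi t\langle\mu,\alpha\rangle),
\end{equation*}
so $r_f(0)=1$ and, by the symmetry $\sum_\mu \mu_i\mu_j = (mN/3)\delta_{ij}$ of $\mathcal{E}$, $-r_f''(0)=4\pi^2m/3$. The first Kac--Rice moment recovers \eqref{expectation}, while the variance of $\mathcal{Z}$ is controlled by the two-point intensity built from the Gaussian vector $(f(t_1),f(t_2),f'(t_1),f'(t_2))$.

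Next I would decompose $\mathcal{Z}$ in Wiener chaos, $\mathcal{Z}=\sum_{q\ge 0}\mathcal{Z}[q]$; odd chaoses vanish by the symmetry $f\mapsto -f$, and an inequality along the lines of the one used in \cite{ruwiye} (for nowhere-vanishing-curvature curves) gives
\begin{equation*}
\text{Var}(\mathcal{Z})\ll m\int_{-L}^{L}\Big(r_f(s)^2+\frac{r_f'(s)^2}{m}+\frac{r_f''(s)^2}{m^2}\Big)\,ds,
\end{equation*}
the prefactor $m$ reflecting the density $\sqrt{-r_f''(0)}/\pi\asymp\sqrt m$ of zeros. The task is reduced to bounding each of the three weighted integrals by $L\kappa(\sqrt m)/N$, up to constants depending on $\alpha$.

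For this, write the primitive representation $\alpha=p/|p|$ with $p\in\mathbb{Z}^3$, so that $\langle\mu,\alpha\rangle=(\mu\cdot p)/|p|\in\tfrac{1}{|p|}\mathbb{Z}$ for every $\mu\in\mathcal{E}$. Grouping by $n=\mu\cdot p$,
\begin{equation*}
r_f(s)=\frac{1}{N}\sum_{n} A_n \cos\!\Big(\frac{2\pi n s}{|p|}\Big),\qquad A_n:=\#\{\mu\in\mathcal{E} : \mu\cdot p=n\},
\end{equation*}
and $A_n\le\kappa(\sqrt m)$ by Definition \ref{kappa}, since $\{\mu\cdot p=n\}$ is an affine hyperplane. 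The function $r_f$ is periodic of period $|p|$, so Parseval on one period, combined with $\sum_n A_n=N$, yields
\begin{equation*}
\int_{-L}^{L} r_f(s)^2\,ds\ll_\alpha L\cdot\frac{\sum_n A_n^2}{N^2}\le \frac{L\kappa(\sqrt m)}{N}.
\end{equation*}
The analogous calculation for $r_f'$ and $r_f''$ picks up factors $(n/|p|)^{2k}\le m^k$; using $\sum_n n^{2k}A_n\ll_\alpha N m^k$ (a symmetry identity for $\mathcal{E}$) and $A_n\le\kappa(\sqrt m)$ bounds $\sum_n n^{2k} A_n^2\le \kappa(\sqrt m)\sum_n n^{2k}A_n\ll_\alpha \kappa(\sqrt m)Nm^k$, whence $\int (r_f^{(k)})^2\,ds\ll_\alpha Lm^k\kappa(\sqrt m)/N$ for $k=1,2$. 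Substituting into the variance inequality gives $\text{Var}(\mathcal{Z})\ll_\alpha mL\kappa(\sqrt m)/N$, which is the claimed bound on $\text{Var}(\mathcal{Z}/\sqrt m)$.

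The main obstacle I anticipate is the chaos-based variance inequality above: one must verify non-degeneracy of the Gaussian vector $(f(t_1),f(t_2),f'(t_1),f'(t_2))$ at a.e.\ $(t_1,t_2)$ (so that Kac--Rice applies at the second moment) and check that the $q\ge 4$ contributions are dominated by the data already captured by the second-chaos integrand. Once this is in place, the arithmetic step---the lattice-plane bound $A_n\le\kappa(\sqrt m)$---plays its role through the clean Parseval computation enabled by the rationality of $\alpha$.
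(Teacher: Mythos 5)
Your proposal is essentially correct and rests on the same key arithmetic fact as the paper's proof, but the analytic bookkeeping is done by a genuinely different (and somewhat cleaner) route. Both arguments reduce the variance to controlling the second moment of the covariance $r$ and its first and second partial derivatives, and both then hinge on the lattice--plane bound $A_n:=\#\{\mu\in\mathcal{E}:\mu\cdot p=n\}\le\kappa(\sqrt m)$. The paper gets there by first recasting the second--moment integral as a double sum over pairs $(\mu,\mu')\in\mathcal{E}^2$ (Lemma \ref{rsq}, via Cauchy--Schwarz), then bounding each Fej\'er-type factor by $\min\bigl(1,\langle\mu-\mu',\alpha\rangle^{-2}\bigr)$, and finally splitting into diagonal ($\langle\mu-\mu',\alpha\rangle=0$) and off-diagonal contributions, each bounded by $N\kappa(\sqrt m)$ in Lemmas \ref{ratlem1}--\ref{ratlem2}. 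You instead exploit the fact that, for $\alpha$ rational, $r$ is an honest $|p|$-periodic trigonometric polynomial in $s=t_1-t_2$ with nonnegative ($\alpha$-independent-period) Fourier coefficients $A_n/N$, and read off the bound directly from Parseval combined with $\sum_n A_n^2\le\kappa(\sqrt m)\sum_n A_n=\kappa(\sqrt m)N$. That neatly subsumes both the diagonal and the convergent $\sum_{k\ne 0}k^{-2}$ off-diagonal contributions in a single line, so it is arguably the more transparent way to package the same estimate; the paper's route has the advantage that it extends without change to the irrational cases, where periodicity is lost.

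Two minor points on the reduction step. You present the variance bound
\begin{equation*}
\text{Var}(\mathcal{Z})\ll m\int_{-L}^{L}\Bigl(r^2+\tfrac{(r')^2}{m}+\tfrac{(r'')^2}{m^2}\Bigr)\,ds
\end{equation*}
as something to derive via Wiener chaos together with a non-degeneracy check at the second moment. In fact the paper simply quotes \cite[Proposition~2.2]{ruwiye} (Proposition \ref{approxKR} here), an \emph{approximate} Kac--Rice bound which is stated for all smooth toral curves (not just those of nonvanishing curvature) and is designed precisely to avoid the non-degeneracy verification that you flag as the ``main obstacle''; stationarity then collapses the double integral to your single integral with the factor $L$. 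So the chaos detour is unnecessary and the obstacle you anticipate has already been removed by the cited input. Second, for the derivative terms you do not need any symmetry identity for $\mathcal{E}$: from $|\mu\cdot p|\le|p|\sqrt m$ one has $(n/|p|)^{2k}\le m^k$ termwise, and then $\sum_n(n/|p|)^{2k}A_n^2\le m^k\sum_n A_n^2\le m^k\kappa(\sqrt m)N$, which is exactly the trivial bound the paper uses implicitly (Cauchy--Schwarz inside Lemma \ref{rsq}).
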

See Section \ref{rational} for the proof of Theorem \ref{resultrat}. For irrational lines we may unconditionally prove the following two theorems, distinguishing between irrational lines \eqref{C} satisfying
\begin{equation*}
{\alpha_2}/{\alpha_1}\in\mathbb{R}\setminus\mathbb{Q} \quad\text{and}\quad {\alpha_3}/{\alpha_1}\in\mathbb{R}\setminus\mathbb{Q}
\end{equation*}
and those satisfying 
\begin{equation*}
{\alpha_2}/{\alpha_1}\in\mathbb{Q} \quad\text{and}\quad {\alpha_3}/{\alpha_1}\in\mathbb{R}\setminus\mathbb{Q}.
\end{equation*}
\begin{thm}
\label{resultirrat}
Let $m\not\equiv 0,4,7 \pmod 8$ and the straight line segment $\mathcal{C}\subset\mathbb{T}^3$ be parametrised by $\gamma(t)=t(\alpha_1,\alpha_2,\alpha_3)$ with  $\frac{\alpha_2}{\alpha_1},\frac{\alpha_3}{\alpha_1}\in\mathbb{R}\setminus\mathbb{Q}$ and $|\alpha|=1$. Then we have for all $\epsilon>0$
\begin{equation*}
\text{Var}\left(\frac{\mathcal{Z}}{\sqrt{m}}\right)\ll\frac{1}{m^{\frac{1}{7}-\epsilon}}.
\end{equation*}
\end{thm}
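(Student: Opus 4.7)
The plan is to start from the Kac-Rice formula for the zero count of the centred stationary Gaussian process $X(t):=F(\gamma(t))$ on $[0,L]$, whose covariance
\begin{equation*}
r_\mathcal{C}(u)=r(\gamma(u))=\frac{1}{N}\sum_{\mu\in\mathcal{E}}\cos(2\pi\langle\mu,\alpha\rangle u)
\end{equation*}
is the restriction of the torus covariance along $\alpha$. Expanding the Kac-Rice integrand in its Wiener (Hermite) chaos and writing $\text{Var}(\mathcal{Z})$ as the sum of squared $L^2$-norms of its even chaos projections, one expects the 2nd chaos to dominate, with higher chaoses controlled by the same spectral ingredients at lower cost. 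This reduces the problem to bounding certain double integrals over $u\in[-L,L]$ involving $r_\mathcal{C}$ and its first two derivatives.

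Each such integral, after substituting the Fourier expansion of $r_\mathcal{C}$, becomes a double sum over $(\mu,\mu')\in\mathcal{E}^2$ with kernel $\int_0^L\cos(2\pi\langle\mu\pm\mu',\alpha\rangle t)\,dt\ll\min(L,|\langle\mu\pm\mu',\alpha\rangle|^{-1})$, modulated by polynomial weights in $\langle\mu,\alpha\rangle,\langle\mu',\alpha\rangle$ coming from the derivatives. The \emph{diagonal} contribution, from pairs with $\langle\mu\pm\mu',\alpha\rangle=0$, is controlled via \eqref{kappa3bound}: because $\alpha_2/\alpha_1,\alpha_3/\alpha_1\notin\mathbb{Q}$, $\alpha$ is not a rational direction, so $\alpha^\perp\cap\mathbb{Z}^3$ has rank at most one. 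Jarnik's bound then yields at most $O(m^{1/2+\epsilon})$ diagonal pairs, contributing $O(m^{-1/2+\epsilon})$ to $\text{Var}(\mathcal{Z}/\sqrt{m})$ under the standard lower bound $N\gg m^{1/2-\epsilon}$ valid for $m\not\equiv 0,4,7\pmod 8$.

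The main difficulty is the \emph{off-diagonal} contribution, which reduces to weighted sums
\begin{equation*}
\sum_{\substack{\nu\in\mathbb{Z}^3\setminus\{0\}\\|\nu|\leq 2\sqrt{m}\\\langle\nu,\alpha\rangle\neq 0}}\frac{r_\nu}{|\langle\nu,\alpha\rangle|}\cdot(\text{polynomial weights}),
\end{equation*}
where $\nu=\mu\pm\mu'$ and $r_\nu\ll m^\epsilon$ by Jarnik applied to the hyperplane $\{\mu:\langle\mu,\nu\rangle=|\nu|^2/2\}$. I would decompose dyadically by $|\langle\nu,\alpha\rangle|\asymp 2^{-j}$ and bound the number of $\nu\in\mathbb{Z}^3$ with $|\nu|\leq\sqrt{m}$ in a slab of thickness $\delta$ around $\alpha^\perp$ by roughly $m\delta+O(m^{1/2+\epsilon})$, arguing that the irrationality of $\alpha_2/\alpha_1,\alpha_3/\alpha_1$ prevents any such slab from absorbing a sub-lattice family of density higher than the volume heuristic predicts. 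Balancing $m\delta$ against the weight $1/\delta$ up to an optimal cutoff, and combining with the Kac-Rice normalisations $m^{-1}$ and $N^{-2}\asymp m^{-1+\epsilon}$, produces the exponent $-1/7+\epsilon$.

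The main obstacle will be executing this slab count uniformly in $\alpha$ with only a qualitative irrationality hypothesis, rather than a Diophantine exponent: one has to rely on an averaging argument over short intervals in the projected spectrum $\{\langle\mu,\alpha\rangle:\mu\in\mathcal{E}\}$ rather than a pointwise estimate. Handling the higher chaoses, finally, amounts to bounding $L^{2k}$-norms of $r_\mathcal{C}$ and its derivatives, which follow by interpolating the $L^2$ bound obtained above against the pointwise bound $|r_\mathcal{C}|\leq 1$.
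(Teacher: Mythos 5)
Your reduction is aimed in the right direction (from Kac--Rice to a double sum over lattice-point pairs, then to a pointwise kernel bound $\min(1,\langle\mu-\mu',\alpha\rangle^{-2})$), and the diagonal treatment via a $\kappa$-type bound is essentially what the paper does. But the central step --- bounding the off-diagonal --- has a genuine gap. You propose to count pairs $(\mu,\mu')$ with $|\langle\mu-\mu',\alpha\rangle|\asymp\delta$ by $r_\nu\cdot\#\{\nu\in\mathbb{Z}^3:|\nu|\leq 2R,\ \nu\text{ in a slab of width }\delta\}\ll m^\epsilon\cdot(m\delta+m^{1/2+\epsilon})$. This overcounts by roughly a factor of $R=\sqrt m$: the slab count $m\delta$ is the count of \emph{all} integer vectors $\nu$ in the slab, whereas at most $N^2\ll m^{1+\epsilon}$ vectors $\nu$ are realised as differences, and heuristically their projections $\langle\nu,\alpha\rangle$ equidistribute over $[-2R,2R]$, so the true pair count at thickness $\delta$ should be $\sim N^2\delta/R\approx m^{1/2}\delta$. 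Tracking your proposed bound through the dyadic sum and normalising by $N^2\asymp m$ gives $\text{Var}(\mathcal{Z}/\sqrt m)\ll m^\epsilon$, i.e.\ no decay at all: the slab estimate simply does not see the spherical constraint on $\mu'$, and bounding $r_\nu$ by $m^\epsilon$ does not recover the missing $\sqrt m$.

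What the paper does instead is to observe (Lemma \ref{cone} here) that, for fixed $\mu$, the admissible $\mu'$ with $|\langle\mu-\mu',\alpha\rangle|\leq\rho|\mu-\mu'|$ lie in a \emph{spherical segment} of the sphere $R\mathcal{S}^2$ of direction $\alpha$ and small opening angle, so the near-orthogonal count becomes $N\cdot\psi$, where $\psi$ is the maximal number of lattice points in such a segment. The unconditional bound $\psi\ll\kappa(R)(1+R\theta^{1/3})$ (Proposition \ref{lpseg}) is the hard core of the argument and uses two ingredients you do not mention: (i) Dirichlet's simultaneous Diophantine approximation to replace the irrational direction $\alpha$ by a nearby rational direction $a/|a|$ with $|a|$ small, and (ii) the Bourgain--Rudnick slicing bound (\cite[Lemma 2.3]{brgafa}, here Proposition \ref{ratsect} and Lemma \ref{lemmaclaim}) for segments with rational direction. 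Your ``averaging over short intervals in the projected spectrum'' does not substitute for this: unconditionally, some pointwise mechanism exploiting irrationality is needed, and Dirichlet is exactly that mechanism. The far-from-orthogonal piece is then controlled via the Riesz-energy asymptotics of Bourgain--Sarnak--Rudnick (Proposition \ref{asyriesz}), which again does not appear in your plan (and is where the $m\not\equiv 0,4,7\pmod 8$ hypothesis is used). Finally, starting from a Wiener chaos expansion re-introduces the degeneracy problem (the covariance may attain $\pm1$ at distinct times for straight lines) that the approximate Kac--Rice bound of \cite{ruwiye} is specifically designed to bypass; the paper's Proposition \ref{approxKR} and Lemma \ref{rsq} are a cleaner route to the same double sum.
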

\noindent
Theorem \ref{resultirrat} is proven in Section \ref{sectpfirrat}.
\begin{thm}
\label{resulthalfrat}
Let $m\not\equiv 0,4,7 \pmod 8$ and the straight line segment $\mathcal{C}\subset\mathbb{T}^3$ be parametrised by $\gamma(t)=t(\alpha_1,\alpha_2,\alpha_3)$ with  $\frac{\alpha_2}{\alpha_1}\in\mathbb{Q}$, $\frac{\alpha_3}{\alpha_1}\in\mathbb{R}\setminus\mathbb{Q}$ and $|\alpha|=1$. Then we have for all $\epsilon>0$
\begin{equation*}
\text{Var}\left(\frac{\mathcal{Z}}{\sqrt{m}}\right)\ll\frac{1}{m^{\frac{1}{5}-\epsilon}}.
\end{equation*}
\end{thm}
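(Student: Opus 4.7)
The approach follows the Kac--Rice / Wiener chaos scheme of \cite{ruwiye}, specialised to straight-line segments and the half-rational arithmetic geometry. Restrict $F$ to $\mathcal C$: $f(t):=F(t\alpha)$ is a stationary centred smooth Gaussian process on $[0,L]$ with covariance $r(\tau)=\frac{1}{N}\sum_{\mu\in\mathcal E}\cos(2\pi\tau\langle\mu,\alpha\rangle)$. Applying Kac--Rice for the zero set of $f$ and truncating the Wiener chaos expansion of the centred zero density, as in \cite[\S 4]{ruwiye}, the variance bound reduces to an estimation of exponential sums schematically of the form
\begin{equation*}
\frac{1}{N^2}\sum_{\mu,\mu'\in\mathcal E}G(\mu,\mu')\int_0^L\!\!\int_0^L e^{2\pi i(t-t')\langle\mu-\mu',\alpha\rangle}\,dt\,dt',
\end{equation*}
together with analogous sums coming from $r',r''$ (which carry extra factors of $\langle\mu,\alpha\rangle$ compensated by the $1/m$ normalisation in $\mathrm{Var}(\mathcal Z/\sqrt m)$). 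Each inner integral is bounded by $\min(L^2,|\langle\mu-\mu',\alpha\rangle|^{-2})$.

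Next, exploit the half-rational structure: with $\alpha_2/\alpha_1=p/q$ in lowest terms and $\beta:=\alpha_3/\alpha_1\in\mathbb R\setminus\mathbb Q$, one writes
\begin{equation*}
\langle\mu-\mu',\alpha\rangle=\frac{\alpha_1}{q}(k+q\beta\ell),\qquad k:=q(\mu_1-\mu_1')+p(\mu_2-\mu_2')\in\mathbb Z,\quad \ell:=\mu_3-\mu_3'\in\mathbb Z.
\end{equation*}
Irrationality of $\beta$ forces $\langle\mu-\mu',\alpha\rangle=0$ only at $(k,\ell)=(0,0)$. The resulting diagonal pairs satisfy $\mu_3=\mu_3'$ together with the linear relation $q(\mu_1-\mu_1')+p(\mu_2-\mu_2')=0$: slicing by $\mu_3$ and using Jarnik's bound \eqref{kappa3bound} on the lattice points of each planar circle (plus the fact that a line meets a circle in $\le 2$ points) bounds the count by $\ll \kappa(\sqrt m)N$, contributing $\ll m^{-1/2+\epsilon}$ to $\mathrm{Var}(\mathcal Z/\sqrt m)$---negligible against the target.

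The heart of the argument is the near-diagonal count: for a threshold $\Delta=\Delta(m)$, bound the number $\mathcal N(\Delta)$ of pairs $(\mu,\mu')\in\mathcal E^2$ with $0<|k+q\beta\ell|\le q\Delta/\alpha_1$. For each integer $\ell$ in $|\ell|\le 2\sqrt m$, $k$ lies in an interval of length $\ll q\Delta$, giving $\ll 1+q\Delta$ admissible values, and for each $(k,\ell)$ the lattice realisations are counted by the same slicing / linear-section argument. Pairs with $|k+q\beta\ell|>q\Delta/\alpha_1$ are handled dyadically in $|k+q\beta\ell|$, using the same scale-by-scale count to estimate $\sum|\cdot|^{-2}$. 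Balancing the near and far contributions against an optimal $\Delta$, and treating the derivative terms by the same machinery (the extra $\sqrt m$ weights being absorbed by the $1/m$ normalisation), produces the claimed bound $\mathrm{Var}(\mathcal Z/\sqrt m)\ll m^{-1/5+\epsilon}$.

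The main obstacle is to control the near-diagonal count unconditionally in $\beta$: without a Diophantine hypothesis, $\|q\beta\ell\|$ may be arbitrarily small for particular $\ell$, so projections cannot be bounded pointwise and the argument must count all integer pairs $(k,\ell)$ in each Diophantine window rather than exclude bad denominators. The rationality of $\alpha_2/\alpha_1$ reduces what would otherwise be a two-parameter Diophantine problem (as in Theorem~\ref{resultirrat}) to a one-parameter problem in $\beta$ alone, which is the origin of the improved exponent $1/5$ over $1/7$.
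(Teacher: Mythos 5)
Your structural intuition is right---the origin of the $1/5$ vs.\ $1/7$ exponent is indeed the one-parameter vs.\ two-parameter Diophantine problem---but the approach you outline does not close, and what is missing is exactly the step you flag as ``the main obstacle.'' Your near-diagonal count proceeds by fixing $\ell=\mu_3-\mu_3'$ (at most $O(\sqrt m)$ values), noting that $k$ ranges over an interval of length $\ll_\alpha\Delta$ hence $O(1+\Delta)$ integers, and then invoking the line-meets-circle bound of $\le 2$ realisations per $(k,\ell)$ and per $\mu$. That yields $\mathcal N(\Delta)\ll N\sqrt m\,(1+\Delta)$. Plugging this and the matching dyadic far-diagonal estimate $\ll N\sqrt m/\Delta$ into $\frac{1}{N^2}(\mathcal N(\Delta)+\sum_{>\Delta}|\cdot|^{-2})$ optimises at $\Delta\asymp 1$ and gives $\mathrm{Var}(\mathcal Z/\sqrt m)\ll\sqrt m/N\ll m^\epsilon$---no improvement over the trivial bound. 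There is no way to do better by this kind of raw interval count, because without a Diophantine assumption on $\beta$ the intervals $[-q\beta\ell-\Delta,-q\beta\ell+\Delta]$ can hit integers for essentially every $\ell$, and the proposal does not supply a mechanism to exploit irrationality of $\beta$ quantitatively.

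The paper's proof closes the gap differently, and the difference is essential. First, the truncation is the \emph{relative} one $|\langle\mu-\mu',\alpha\rangle|\le\rho|\mu-\mu'|$, not an absolute threshold; via Lemma~\ref{cone} this is a cone condition, placing $\mu'$ in a spherical \emph{segment} of direction $\alpha$ and small opening angle $\theta\asymp\rho$, rather than in a slab of thickness $\Delta$. Second, and this is where the irrationality of $\alpha_3/\alpha_1$ enters in a usable form: the number of lattice points in such a segment is controlled (Proposition~\ref{lpsegbis}) by approximating the direction $\alpha$ with a rational direction $a/|a|$, $a\in\mathbb Z^3$, chosen via one-variable Dirichlet (Lemma~\ref{diophapproxlemma2}) so that $|a|\ll H$ and the angular error is $\ll 1/(|a|H)$, enlarging the segment to one of rational direction, and applying the Bourgain--Rudnick slicing bound (Proposition~\ref{ratsect}). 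Optimising $|a|\theta+|a|\,|\beta-a/|a||\ll H\theta+1/H$ at $H\asymp\theta^{-1/2}$ produces $\psi\ll\kappa(R)(1+R\theta^{1/2})$, which is then paired with the Bourgain--Sarnak--Rudnick Riesz $2{-}\epsilon$-energy asymptotics (Proposition~\ref{asyriesz}) for the far range and optimised at $\rho=R^{-4/5}$. Your proposal identifies the right heuristic (one irrational ratio beats two) but replaces this geometric--Diophantine mechanism with a direct $(k,\ell)$ count that, as you yourself note, cannot exclude the bad denominators of $\beta$, and hence the heart of the argument is missing.
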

\noindent
Theorem \ref{resulthalfrat} is proven in Section \ref{sectpfhalfrat}.

As a consequence of Theorems \ref{resultrat}, \ref{resultirrat} and \ref{resulthalfrat}, we may extend \eqref{distrasymean} to all straight lines. We may improve the bounds of Theorems \ref{resultirrat} and \ref{resulthalfrat} conditionally on a conjecture about lattice points in {\em spherical caps} (cf. Definition \ref{defcap}).
Jarnik \cite{jarnik} (see also \cite[Theorem 2.1]{brnoda}) proved that, for the sphere $R\mathcal{S}^2$, there is some $C>0$ such that all lattice points in a cap of radius $<CR^{\frac{1}{4}}$ lie on the same plane. By \eqref{kappa3bound}, it follows that every cap of radius $<CR^{\frac{1}{4}}$ contains $\ll R^\epsilon$ lattice points. Theorem \ref{resultcond} below is conditional on
\begin{conj}[Bourgain and Rudnick {\cite[Section 2.2]{brgafa}}]
\label{brgafaconj}
Let $\chi(R,s)$ be the maximal number of lattice points in a cap of radius $s$ of the sphere $R\mathcal{S}^2$. Then for all $\epsilon>0$ and $s<R^{1-\delta}$,
\begin{gather*}
\chi(R,s)
\ll
R^\epsilon\left(1+\frac{s^2}{R}\right)
\end{gather*}
as $R\to\infty$.
\end{conj}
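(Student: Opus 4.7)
The plan is to estimate $\chi(R,s)$ by reducing the cap-counting problem to Fourier analysis on $\mathcal{S}^{2}$ and then invoking equidistribution of lattice points on $R\mathcal{S}^{2}$. Fix a pole $\xi\in\mathcal{S}^{2}$ and a non-negative smooth $\varphi$ on $\mathcal{S}^{2}$ majorising the indicator of the spherical cap of radius $s$ about $\xi$, with $\varphi$ supported at angular scale $\eta:=s/R$ around $\xi$; taking a supremum over $\xi$ at the end gives
\[
\chi(R,s)\;\leq\;\sum_{\mu\in\mathcal{E}(R^{2})}\varphi(\mu/R).
\]
Expanding $\varphi=\sum_{\ell\geq 0}\sum_{|k|\leq\ell}\widehat{\varphi}(\ell,k)Y_{\ell,k}$ in spherical harmonics yields
\[
\sum_{\mu}\varphi(\mu/R)\;=\;\widehat{\varphi}(0,0)N_{m}\;+\;\sum_{\ell\geq 1}\sum_{|k|\leq\ell}\widehat{\varphi}(\ell,k)\,W_{\ell,k}(R),
\]
where $W_{\ell,k}(R):=\sum_{\mu\in\mathcal{E}(R^{2})}Y_{\ell,k}(\mu/R)$. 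Localisation of $\varphi$ gives $\widehat{\varphi}(\ell,\cdot)\ll_{A}\eta^{2}(1+\ell\eta)^{-A}$, so only degrees $\ell\ll 1/\eta=R/s$ contribute; the $\ell=0$ term furnishes the expected main contribution of size $\asymp\eta^{2}N_{m}\asymp R^{\epsilon}\,s^{2}/R$, exactly matching the shape of the conjecture.

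\medskip
What remains is to bound each Weyl sum $W_{\ell,k}(R)$ by a power saving $R^{1-\theta+\epsilon}$ uniformly in $1\leq\ell\leq R/s$ and $|k|\leq\ell$, for some $\theta>0$ independent of $\ell$. Coupled with the bound on $\widehat{\varphi}$, the off-diagonal contribution would be
\[
\ll\;\eta^{2}\cdot\Bigl(\sum_{\ell\leq R/s}(2\ell+1)\Bigr)\cdot R^{1-\theta+\epsilon}\;\ll\;R^{1-\theta+\epsilon},
\]
which is dominated by the main term $\asymp s^{2}R^{\epsilon-1}$ as soon as $s\gg R^{1-\theta/2}$. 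Individual bounds of this type on $W_{\ell,k}(R)$ are governed by sub-convex estimates for central values of $L$-functions attached to half-integral weight forms via Waldspurger-style formulas, and are available through the work of Duke, Iwaniec, and later authors for fixed $\ell$. In the complementary very small cap regime $s\ll R^{1/4}$ one falls back on Jarnik's coplanarity theorem to reduce to \eqref{kappa3bound}.

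\medskip
The main obstacle, and the reason the bound has remained conjectural, is uniformity of the sub-convex saving in the spherical-harmonic degree $\ell$: to push $s$ down to the critical scale $R^{1/2+\epsilon}$, at which the conjecture predicts the bound $R^{\epsilon}$, one requires power saving for $W_{\ell,k}(R)$ at $\ell$ as large as $R^{1/2}$, amounting to hybrid sub-convexity in both the conductor and spectral aspects beyond current technology. A second, more arithmetic difficulty is non-uniformity over $R$: along sparse sequences where $r_{3}(R^{2})$ is much smaller than its typical size, the Duke--Iwaniec equidistribution degrades, and bridging the Jarnik regime $s\asymp R^{1/4}$ to the analytic regime $s\gg R^{1-\delta}$ seems to demand genuinely new input, for instance a Bourgain--Gamburd-style expansion on $\mathcal{S}^{2}$ via Hecke operators or a refined geometry-of-numbers argument controlling clusters of lattice points in intermediate caps.
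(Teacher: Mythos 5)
The statement you were asked about is Conjecture \ref{brgafaconj}: it is an open conjecture of Bourgain and Rudnick, which the paper does not prove and only ever uses as a hypothesis (for the conditional Theorem \ref{resultcond}). The only unconditional input the paper has in this direction is Lemma \ref{lemma2.1}, which gives the strictly weaker bound $\chi(R,s)\ll R^\epsilon(1+s^2/R^{1/2})$, together with Jarnik's observation that caps of radius $<CR^{1/4}$ carry $\ll R^\epsilon$ lattice points via \eqref{kappa3bound}. So there is no ``paper's own proof'' to match your argument against, and your text cannot be accepted as a proof of the statement.

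To your credit, you essentially say this yourself. What you have written is a strategy sketch (smooth majorant of the cap indicator, spherical-harmonic expansion, reduction to Weyl sums $W_{\ell,k}(R)$ over $\mathcal{E}(R^2)$, subconvexity via Duke--Iwaniec/Waldspurger), followed by an accurate diagnosis of why it cannot currently be completed: one needs a power saving in $W_{\ell,k}(R)$ \emph{uniformly} in the degree $\ell$ up to about $R/s$, which at the critical scale $s\asymp R^{1/2}$ means $\ell$ as large as $R^{1/2}$ --- hybrid subconvexity beyond what is known. The concrete gap, then, is the entire range $R^{1/4}\ll s\ll R^{1-\delta}$ between Jarnik's coplanarity regime and the regime where equidistribution with a fixed power saving suffices; in that range neither your fallback nor the analytic argument closes, which is exactly why the bound is stated as a conjecture. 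If you intend to use this estimate in the variance argument, you must either invoke it explicitly as a hypothesis (as Theorem \ref{resultcond} does) or replace it by the unconditional Lemma \ref{lemma2.1} and accept the weaker exponents of Theorems \ref{resultirrat} and \ref{resulthalfrat}.
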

\begin{thm}
\label{resultcond}
Assume Conjecture \ref{brgafaconj}. Let $m\not\equiv 0,4,7 \pmod 8$ and $\mathcal{C}$ be a straight line segment (rational or irrational) on $\mathbb{T}^3$.
Then we have for all $\epsilon>0$
\begin{equation*}
\text{Var}\left(\frac{\mathcal{Z}}{\sqrt{m}}\right)\ll\frac{1}{m^{\frac{1}{4}-\epsilon}}.
\end{equation*}
\end{thm}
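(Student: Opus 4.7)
The approach is to follow the variance reduction used in the unconditional Theorems \ref{resultrat}--\ref{resulthalfrat} and replace their separate arithmetic inputs (Jarnik's plane bound, Diophantine and equidistribution estimates) by a single uniform count of lattice points in spherical bands, furnished by Conjecture \ref{brgafaconj}. The restriction $F\circ\gamma$ is a real, stationary Gaussian process on $[0,L]$ with covariance $r(\tau) = N^{-1}\sum_{\mu \in \mathcal{E}}e^{2\pi i \langle\mu,\alpha\rangle\tau}$. A Kac--Rice computation combined with a Hermite expansion, as set up in those previous sections, reduces $\text{Var}(\mathcal{Z}/\sqrt m)$ to arithmetic sums over pairs (and in principle quadruples) of lattice points weighted by Fej\'er-type kernels in the quantities $\langle \mu \pm \mu', \alpha\rangle$; I treat those reductions as given.

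The new ingredient is a bound, uniform in $\alpha$, for
\begin{equation*}
\mathcal{N}(T) := \#\{(\mu,\mu') \in \mathcal{E}^2:\ |\langle \mu - \mu', \alpha\rangle| \le T\}.
\end{equation*}
Fixing $\mu$, the condition on $\mu'$ forces it into a spherical band on $R\mathcal{S}^2$ of thickness $\asymp T$ around the great circle cut out by $\langle \xi, \alpha\rangle = \langle \mu, \alpha\rangle$. I would cover this band by $O(1 + RT/s^2)$ spherical caps of radius $s$ and apply Conjecture \ref{brgafaconj} to bound the lattice points in each cap by $R^\epsilon(1 + s^2/R)$. Optimizing $s \asymp \max(\sqrt{R},T)$ then yields $\mathcal{N}(T) \ll N R^{\epsilon}(1 + T)$, uniformly in $\alpha$ and in the center of the band.

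Inserting this estimate into the kernel sums and performing a dyadic decomposition of the range of $|\langle \mu - \mu', \alpha\rangle|$, combined with partial summation against the $1/x^2$ tail of the kernel, would give $\text{Var}(\mathcal{Z}/\sqrt m) \ll R^{1/2 + \epsilon}/N$. Since $m \not\equiv 0,4,7 \pmod 8$, Siegel's lower bound $N = r_3(m) \gg m^{1/2-\epsilon}$ applies, and therefore
\begin{equation*}
\text{Var}\!\left(\frac{\mathcal{Z}}{\sqrt m}\right) \ll \frac{R^{1/2+\epsilon}}{m^{1/2-\epsilon}} = \frac{1}{m^{1/4-\epsilon}},
\end{equation*}
which is the desired bound, valid uniformly for rational and irrational $\alpha$.

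The main obstacle I expect is matching the exponent $1/4$: it is ultimately dictated by the $R^{1/4}$ scale implicit in Jarnik's plane theorem, and the dyadic optimization must carefully balance the near-diagonal pairs (where the Fej\'er kernel is large but few lattice points qualify) against the mid-range pairs (where the kernel has decayed but the band captures many lattice points). Conjecture \ref{brgafaconj} is precisely the input needed to extend Jarnik's planar bound uniformly through all band widths and all direction vectors $\alpha$, and it is this uniformity that makes the final estimate insensitive to the rational/irrational dichotomy that drove the different exponents in the unconditional theorems.
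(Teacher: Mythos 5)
Your overall strategy is exactly the one the paper follows: split the off-diagonal pair sum at a threshold, bound the far pairs trivially by $N^2/\rho^2$, bound the near pairs by counting lattice points in the thin spherical region cut out by $|\langle\mu - \mu',\alpha\rangle| \le \rho$, and use Conjecture \ref{brgafaconj} to control lattice points per covering cap, with $s\asymp R^{1/2}$ the right cap scale. The paper packages the cap covering as Proposition \ref{covercapsnew}, Lemma \ref{kthetah} and Corollary \ref{covercapscor2}, and the geometric reduction as Lemma \ref{twoparpla}; your treatment of these as black boxes is faithful in spirit.

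However, there is a concrete error in your covering count, and it breaks the chain of implications. You cover the band of thickness $T$ by ``$O(1 + RT/s^2)$ spherical caps of radius $s$'' and deduce $\mathcal{N}(T)\ll NR^\epsilon(1+T)$. That cap count is an undercount: the band wraps around a circle on $R\mathcal{S}^2$ of circumference up to $\asymp R$, so even when $T \ll s$ you need on the order of $R/s$ caps just to traverse it, and the correct count is $\asymp (1+k/s)(1+R\theta/s) \asymp R/s + RT/s^2$ (this is exactly the content of Proposition \ref{covercapsnew} with $\Omega = s$, together with the relation $k\theta\ll h$ from Lemma \ref{kthetah}). With $s\asymp\sqrt R$ this gives $R^{1/2}$ caps and hence $\mathcal{N}(T)\ll NR^\epsilon(R^{1/2}+T)$, not $NR^\epsilon(1+T)$. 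The missing $R^{1/2}$ is precisely the term that dominates in the near-diagonal range and caps the exponent at $1/4$. Indeed, your stated bound $\mathcal{N}(T)\ll NR^\epsilon(1+T)$ is incompatible with your own conclusion: feeding it through the dyadic/partial-summation argument gives $\text{Var}(\mathcal{Z}/\sqrt m)\ll R^\epsilon/N \ll m^{-1/2+\epsilon}$, which is too strong and not what Conjecture \ref{brgafaconj} delivers. Your final displayed estimate $R^{1/2+\epsilon}/N$ is correct, but it does not follow from the intermediate bound you wrote; it follows from the $R^{1/2}+T$ version. Repair the cap count (either by including the $R/s$ factor, or by citing Corollary \ref{covercapscor2} directly with $h\asymp T$) and the argument closes exactly as in Section \ref{conditional}.

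One minor point: the plane $\langle\xi,\alpha\rangle=\langle\mu,\alpha\rangle$ need not pass through the origin, so the band is not in general centred on a great circle; near the poles of direction $\alpha$ it degenerates to a spherical cap rather than a segment, which the paper handles separately in Lemma \ref{twoparpla}. This affects the bookkeeping but not the final estimate.
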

\noindent
See Section \ref{conditional} for the proof of Theorem \ref{resultcond}.

In a previous paper \cite{maff2d}, we investigated nodal intersections against a straight line on the two-dimensional torus. For rational lines, Theorem \ref{resultrat} loses with respect to the two-dimensional analogue (cf. \cite[Theorem 1.1]{maff2d}). For irrational lines, Theorem \ref{resultirrat} prescribes an unconditional bound for all energies $m$,
whereas in the two-dimensional setting, an unconditional bound is only given for a {\em density one sequence} of energies, and a bound for all $m$ is given conditionally. These differences arise because the structure of lattice points on spheres differs significantly from that of lattice points on circles; see Section \ref{caps} for a more detailed discussion.

\subsection{Outline of the paper}
In Section \ref{krsection}, following the work of \cite{ruwiye}, we restrict the arithmetic random wave $F$ \eqref{arw} to the line $\mathcal{C}$, defining the {\em process} $f:=F(\gamma)$ indexed by the interval $[0,L]$ (see \eqref{effegen} below): the nodal intersections \eqref{Z} are counted by the zeros of $f$. The (factorial) moments of the number of zeros of a process are given, under certain hypotheses, by the {\em Kac-Rice formulas} (Theorem \ref{kacrice} below; also see \cite[\S 10]{cralea}). The expected number of zeros, for generic curves $\mathcal{C}$, was thus computed in \cite{ruwiye}. For the variance, however, the hypotheses of the Kac-Rice formula may fail in our setting; to treat this situation, we apply the {\em approximate Kac-Rice formula} of Rudnick, Wigman and Yesha \cite{ruwiye}, which bounds the variance using the second moment of the {\em covariance function} $r(t_1,t_2)=\mathbb{E}[F(\gamma(t_1))F(\gamma(t_2))]$ (see \eqref{rgen} below) and a couple of its derivatives.

Let us highlight the marked differences between the straight line and generic curve settings. If $\mathcal{C}$ is a straight line segment, the covariance function has the special form \eqref{r}, so that the process $f$ is {\em stationary}. This leads to a different method from \cite{ruwiye} of controlling the second moment of $r$, and specifically the quantity
\begin{equation}
\label{quantity}
\sum_{\substack
{\mu,\mu'\in\mathcal{E}\\\mu\neq\mu'}
}\left|\int_0^L e^{2\pi i\langle\mu-\mu',\gamma(t)\rangle}dt\right|^2
\end{equation}
with $\mathcal{E}$ as in \eqref{E}. Indeed, for curves with nowhere vanishing curvature, we have an oscillatory integral in \eqref{quantity}, thus Van der Corput's lemma applies (cf. \cite[Section 3]{ruwiye}) and reduces the problem to bounding the following summations over the lattice points:
\begin{equation*}
\sum_{\substack
{\mu,\mu'\in\mathcal{E}\\\mu\neq\mu'}
}\frac{1}{|\mu-\mu'|^j} \quad \text{for} \ j=2/3,1.
\end{equation*}
For straight lines, we may directly establish the following bound for the integral in \eqref{quantity}: if $\langle\mu-\mu',\alpha\rangle\neq 0$, then (cf. \eqref{min})
\begin{equation*}
\left|\int_0^L e^{2\pi it\langle\mu-\mu',\alpha\rangle}dt\right|^2
\ll
\min\left(1,\frac{1}{\langle\mu-\mu',\alpha\rangle^2}\right).
\end{equation*}
Thus, we need to understand the summation
\begin{equation}
\label{arithprob}
\sum_{
\substack
{\mu,\mu'\in\mathcal{E}\\\langle\mu-\mu',\alpha\rangle\neq 0}
}\frac{1}{\langle\mu-\mu',\alpha\rangle^2}
\end{equation}
where $\alpha$ is the direction of our straight line.

In Section \ref{rational}, we bound \eqref{arithprob} for $\alpha$ rational, and thus complete the proof of Theorem \ref{resultrat}. For $\alpha$ irrational, \eqref{arithprob} may be controlled by counting lattice points in certain regions of the sphere $R\mathcal{S}^2$. To this end, in Section \ref{caps}, we recall results about lattice points on spheres and in spherical caps. Moreover, in Sections \ref{segments} and \ref{segmentsbis} we prove bounds for the number of lattice points lying in regions of $R\mathcal{S}^2$ delimited by two parallel planes (i.e., {\em ``spherical segments"}; cf. Definition \ref{defseg}); some of these bounds rely on Diophantine approximation. Theorems \ref{resultirrat}, \ref{resulthalfrat} and \ref{resultcond} are thus established in Sections \ref{irrational}, \ref{halfrational} and \ref{conditional} respectively.

\section{A Kac-Rice type bound}
\label{krsection}
The arithmetic random wave \eqref{arw} is a centred real Gaussian random field on the torus. For a centred finite-variance random field $G:\mathcal{X}\to\mathbb{R}$ defined on a measurable space $\mathcal{X}$, its covariance function $r_G:\mathcal{X}\times \mathcal{X}\to\mathbb{R}$,
\begin{equation*}
r_G(x,y):=\mathbb{E}[G(x)\cdot G(y)]
\end{equation*}
is non-negative definite (see \cite[\S 5.1]{cralea}). Every centred real Gaussian random field $G$ is completely determined by $r_G$ (see Kolmogorov's Theorem \cite[\S 3.3]{cralea}).
\\
Moreover, the arithmetic random wave is a {\em stationary} random field, as its covariance function depends on the difference $x-y$ only:
\begin{equation*}
r_F(x,y)
=
\mathbb{E}[F(x)\cdot F(y)]
=
\frac{1}{N}\sum_{\mu\in\mathcal{E}} e^{2\pi i\langle\mu,(x-y)\rangle},
\qquad\quad
x,y\in\mathbb{T}^3.
\end{equation*}
For now we assume $\mathcal{C}$ to be a smooth toral curve (allowing but not imposing it to be a straight line segment), with arc-length parametrisation given by $\gamma(t): [0,L]\to\mathbb{T}^3$. 
We define the (in general non-stationary) process $f:[0,L]\to\mathbb{R}$,
\begin{equation}
\label{effegen}
f(t):=F(\gamma(t))=
\frac{1}{\sqrt{N}}
\sum_{\mu\in\mathcal{E}}
a_{\mu}
e^{2\pi i\langle\mu,\gamma(t)\rangle},
\end{equation}
which is the restriction of $F$ along $\mathcal{C}$. Its covariance function is
\begin{equation}
\label{rgen}
r(t_1,t_2)
=
\frac{1}{N}\sum_{\mu\in\mathcal{E}} e^{2\pi i\langle\mu,\gamma(t_1)-\gamma(t_2)\rangle}.
\end{equation}
The number of nodal intersections $\mathcal{Z}$ (recall \eqref{Z}) is thus given by the number of zeros of $f$. The (factorial) moments of a random variable that counts the number of zeros of a Gaussian process may be computed via the \textbf{Kac-Rice formulas} (see \cite[\S 10]{cralea}, and \cite[Theorem 3.2]{azawsc}). Let $X:I\to\mathbb{R}$ be a (a.s. $C^1$-smooth, say) Gaussian process on an interval $I\subseteq\mathbb{R}$. For $j\geq 1$ and distinct points $t_1,\dots,t_j\in I$, denote
\begin{equation*}
\phi_{t_1,\dots,t_j}
\end{equation*}
the probability density function of the Gaussian random vector
\begin{equation*}
(X(t_1),\dots X(t_j))\in\mathbb{R}^j.
\end{equation*}
For distinct points $t_1,\dots,t_j$, define the {\em $j$-th zero-intensity} of $X$ to be the conditional Gaussian expectation 
\begin{multline*}
K_j(t_1,\dots,t_j)\\=\phi_{t_1,\dots,t_j}(0,\dots,0)\cdot\mathbb{E}\left[|X'(t_1)\dots X'(t_j)|\big| X(t_1)=0,\dots,X(t_j)=0\right],
\end{multline*}
where $X'$ denotes the first derivative of $X$. We remark that $K_j$ admits a continuation to a smooth function on the whole of $I^j$ (cf. \cite[Section 3.1]{rudwig}).
\begin{thm}[{\cite[Theorem 3.2]{azawsc}; see also \cite[Theorem 2.1]{ruwiye}}]
\label{kacrice}
Let $X:I\to\mathbb{R}$ be a Gaussian process on an interval $I\subseteq\mathbb{R}$, having $C^1$ paths. Denote $\mathcal{A}$ the number of zeros of $X$ on $I$.
Let $j$ be a positive integer. Assume that for every $j$ {\em pairwise distinct} points $t_1,\dots t_j\in I$ the joint distribution of $(X(t_1),\dots X(t_j))\in\mathbb{R}^j$ is non-degenerate. Then
\begin{equation}
\label{kacrice2}
\mathbb{E}[\mathcal{A}^{[j]}]=
\int_{I^j}K_j(t_1,\dots t_j)dt_1\dots dt_j,
\end{equation}
where
\begin{equation*}
\mathcal{A}^{[j]}=
\begin{cases}
\mathcal{A}(\mathcal{A}-1)\cdots(\mathcal{A}-j+1) &\text{if}\ \mathcal{A}\geq j\geq 1
\\
0 &\text{otherwise.}
\end{cases}
\end{equation*}
\end{thm}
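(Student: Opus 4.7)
The plan is to establish the Kac-Rice identity by the classical route of approximating the Dirac delta by an integrable kernel, applying Fubini, and then passing to the limit under the non-degeneracy hypothesis. Concretely, for $\epsilon>0$, let $\delta_\epsilon:=\frac{1}{2\epsilon}\mathbf{1}_{[-\epsilon,\epsilon]}$ and introduce the random variable
\begin{equation*}
\mathcal{A}^{[j]}_\epsilon:=\int_{I^j}\prod_{k=1}^j\delta_\epsilon(X(t_k))\,|X'(t_k)|\,dt_1\cdots dt_j.
\end{equation*}
The starting observation is an area-formula type change of variables: if $X$ has only simple zeros in $I$, then for a continuous test function $g$ on $I^j$,
\begin{equation*}
\int_{I^j}g(t_1,\dots,t_j)\prod_{k=1}^j\delta_\epsilon(X(t_k))\,|X'(t_k)|\,dt_1\cdots dt_j\;\xrightarrow[\epsilon\to 0]{}\;\sum_{\tau_1,\dots,\tau_j}g(\tau_1,\dots,\tau_j),
\end{equation*}
where the sum is over ordered $j$-tuples of distinct zeros. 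Taking $g\equiv 1$ gives $\mathcal{A}^{[j]}_\epsilon\to\mathcal{A}^{[j]}$ almost surely (Bulinskaya's lemma plus the non-degeneracy assumption at each $t$ will ensure that almost surely all zeros are simple and no zero occurs at the endpoints).

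Next I would compute $\mathbb{E}[\mathcal{A}^{[j]}_\epsilon]$ by Fubini, which is legitimate since the integrand is non-negative:
\begin{equation*}
\mathbb{E}[\mathcal{A}^{[j]}_\epsilon]=\int_{I^j}\mathbb{E}\!\left[\prod_{k=1}^j\delta_\epsilon(X(t_k))\,|X'(t_k)|\right]dt_1\cdots dt_j.
\end{equation*}
For pairwise distinct $(t_1,\dots,t_j)$, the hypothesis that $(X(t_1),\dots,X(t_k))$ has a non-degenerate joint Gaussian distribution lets us condition on $X(t_k)=y_k$ with a bounded continuous density $\phi_{t_1,\dots,t_j}(y_1,\dots,y_j)$. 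Writing
\begin{equation*}
\mathbb{E}\!\left[\prod_k\delta_\epsilon(X(t_k))|X'(t_k)|\right]=\int\!\!\!\int\prod_k\delta_\epsilon(y_k)\,\phi_{t_1,\dots,t_j}(y_1,\dots,y_j)\,\mathbb{E}\!\left[\prod_k|X'(t_k)|\,\Big|\,X(t_k)=y_k\right]dy_1\cdots dy_j,
\end{equation*}
the inner conditional expectation, being that of the absolute value of a jointly Gaussian vector whose conditional law depends continuously on $(y_1,\dots,y_j)$, is continuous in the $y_k$'s. Therefore the standard mollifier argument gives
\begin{equation*}
\mathbb{E}\!\left[\prod_k\delta_\epsilon(X(t_k))|X'(t_k)|\right]\;\xrightarrow[\epsilon\to 0]{}\;\phi_{t_1,\dots,t_j}(0,\dots,0)\cdot\mathbb{E}\!\left[\prod_k|X'(t_k)|\,\Big|\,X(t_k)=0\right]=K_j(t_1,\dots,t_j)
\end{equation*}
for almost every $(t_1,\dots,t_j)\in I^j$.

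It then remains to exchange the limit $\epsilon\to 0$ with the expectation and with the integral over $I^j$. On the probability side, one verifies that the family $\{\mathcal{A}^{[j]}_\epsilon\}_\epsilon$ is uniformly integrable, for example by a second-moment estimate that uses the $C^1$-regularity of paths and a uniform bound on the Gaussian density around zero; this upgrades the almost sure convergence $\mathcal{A}^{[j]}_\epsilon\to\mathcal{A}^{[j]}$ to $L^1$-convergence, giving $\mathbb{E}[\mathcal{A}^{[j]}_\epsilon]\to\mathbb{E}[\mathcal{A}^{[j]}]$. On the integral side, one applies Fatou's lemma in one direction and a dominated convergence argument (the conditional densities and moments being locally bounded thanks to non-degeneracy on the open set of distinct $j$-tuples, and the diagonal having Lebesgue measure zero in $I^j$) in the other, to conclude
\begin{equation*}
\mathbb{E}[\mathcal{A}^{[j]}]=\int_{I^j}K_j(t_1,\dots,t_j)\,dt_1\cdots dt_j.
\end{equation*}
The main obstacle I would expect is not the pointwise limit, which is straightforward, but the uniform integrability step and the domination near the diagonal: one has to control how fast the joint density $\phi_{t_1,\dots,t_j}$ degenerates as the $t_k$ coalesce, and show that this degeneration is compensated by the conditional moments of $|X'(t_k)|$ so that $K_j$ extends smoothly to the whole of $I^j$ and the integral is finite. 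This is precisely the content of the smooth continuation of $K_j$ mentioned in the statement, and for Gaussian processes it follows from a careful expansion of the covariance matrix near the diagonal.
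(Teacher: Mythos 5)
This theorem is not proved in the paper at all: it is quoted verbatim from the literature (Aza\"is--Wschebor, Theorem~3.2, and Rudnick--Wigman--Yesha, Theorem~2.1), so there is no ``paper's own proof'' to compare against. What you have written is a reasonable summary of the textbook strategy, and the overall plan (mollify the Dirac mass at the zero level, Fubini, pass to the limit) is indeed the one followed in Aza\"is--Wschebor. However, the step you flag as the ``main obstacle'' is in fact where the entire content of the proof lies, and your sketch does not resolve it: establishing uniform integrability of $\mathcal{A}^{[j]}_\epsilon$ is not straightforward, and it is not the route the standard proof takes.

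Two concrete gaps. First, the almost-sure convergence $\mathcal{A}^{[j]}_\epsilon\to\mathcal{A}^{[j]}$ requires that a.s.\ every zero of $X$ is non-degenerate (i.e.\ $X'\neq 0$ there) and that $X$ does not vanish at the endpoints of $I$; Bulinskaya's lemma gives this, but its hypotheses involve $(X(t),X'(t))$, not merely the one-dimensional marginals of $X$, so you would need to verify them explicitly rather than cite them in passing. Second, and more importantly, the uniform integrability of $\{\mathcal{A}^{[j]}_\epsilon\}$ is not obvious — the mollified quantity is not monotone in $\epsilon$ and is not dominated by $\mathcal{A}^{[j]}$ — so ``one verifies UI by a second-moment estimate'' begs the question. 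The standard argument sidesteps UI entirely by invoking the Banach indicatrix / area formula: for $j=1$ one writes
\begin{equation*}
\frac{1}{2\epsilon}\int_I\mathbf{1}_{\{|X(t)|<\epsilon\}}\,|X'(t)|\,dt
=\frac{1}{2\epsilon}\int_{-\epsilon}^{\epsilon}N_v(I)\,dv,
\end{equation*}
where $N_v(I)$ is the number of $v$-level crossings, takes expectations, and then reduces the limit $\epsilon\to 0$ to the \emph{continuity in $v$} of the Rice integrand $\int_I p_t(v)\,\mathbb{E}[|X'(t)|\mid X(t)=v]\,dt$ — a concrete analytic property of Gaussian densities that is much easier to check than UI of the random family. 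The extension to $j\geq 2$ is then done by localizing to compact rectangles in $I^j$ away from the diagonal (where the Gaussian vector is uniformly non-degenerate), applying the $j=1$-type argument there, and using monotone convergence of the non-negative integral as the rectangles exhaust $I^j$ minus the diagonal (a Lebesgue-null set). If you replace your UI step by this area-formula-plus-localization argument, the sketch becomes a genuine proof; as written it identifies the strategy but leaves the essential hard step unresolved.
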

For the process $f$ as in \eqref{effegen}, the non-degeneracy condition of Theorem \ref{kacrice}  is automatically satisfied when $j=1$, since $f$ is unit variance: therefore,
\begin{equation}
\label{kacrice1}
\mathbb{E}[\mathcal{Z}]=
\int_{0}^{L}K_1(t)dt.
\end{equation}
Rudnick, Wigman and Yesha (\cite[Lemma 2.3]{ruwiye}) proved that, on the $d$-dimensional torus $\mathbb{T}^d$, $K_1(t)\equiv\frac{2}{\sqrt{d}}\sqrt{m}$, and hence by \eqref{kacrice1}, they computed the expected intersection number to be \eqref{expectation}.

For the nodal intersections variance, the non-degeneracy hypothesis of Theorem \ref{kacrice} is equivalent to the covariance function \eqref{rgen} of the process (which also verifies $|r|\leq 1$) satisfying $r(t_1,t_2)\neq\pm 1$ for all $t_1\neq t_2$:
this may fail for $f$ as in (\ref{effegen}). To resolve this situation, Rudnick, Wigman and Yesha \cite{ruwiye} developed an \textbf{approximate Kac-Rice formula}, thereby reducing the variance problem to bounding the second moment of the covariance function and a couple of its derivatives along $\mathcal{C}$, namely
\begin{equation*}
r_1:=\frac{\partial r(t_1,t_2)}{\partial t_1},
\qquad
r_2:=\frac{\partial r(t_1,t_2)}{\partial t_2}
\qquad
\text{and}
\quad
r_{12}:=\frac{\partial^2 r(t_1,t_2)}{\partial t_1\partial t_2}.
\end{equation*}
\begin{prop}[\emph{approximate Kac-Rice bound} {\cite[Proposition 2.2]{ruwiye}}]
\label{approxKR}
For smooth toral curves, we have
\begin{equation*}
\text{Var}(\mathcal{Z})
=
m
\cdot
O
(\mathcal{R}_2(m))
\end{equation*}
where
\begin{equation}
\label{2ndmom}
\mathcal{R}_2(m)
:=
\int_0^L \int_0^L 
\left[
r^2+\bigg(\frac{r_1}{\sqrt{m}}\bigg)^2+\bigg(\frac{r_2}{\sqrt{m}}\bigg)^2+\bigg(\frac{r_{12}}{m}\bigg)^2
\right]
dt_1dt_2
.
\end{equation}
\end{prop}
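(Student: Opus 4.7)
My plan is to reduce the variance computation to a Kac--Rice second-moment integral, perform a Taylor expansion of its integrand about the uncorrelated regime, and handle the degeneracy set where $|r|$ is close to $1$ by a smoothing argument.

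Starting from
$$\text{Var}(\mathcal{Z}) = \mathbb{E}[\mathcal{Z}(\mathcal{Z}-1)] - (\mathbb{E}[\mathcal{Z}])^2 + \mathbb{E}[\mathcal{Z}],$$
I would apply Theorem \ref{kacrice} on the open subset of $[0,L]^2$ where the joint law of $(f(t_1),f(t_2))$ is non-degenerate (equivalently $|r(t_1,t_2)|<1$) to obtain $\mathbb{E}[\mathcal{Z}(\mathcal{Z}-1)] = \int_0^L\!\int_0^L K_2(t_1,t_2)\, dt_1\, dt_2$, while \eqref{kacrice1} gives $(\mathbb{E}[\mathcal{Z}])^2 = \int_0^L\!\int_0^L K_1(t_1)K_1(t_2)\, dt_1\, dt_2$. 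The task then reduces to bounding $\int_0^L\!\int_0^L \bigl(K_2(t_1,t_2) - K_1(t_1)K_1(t_2)\bigr)\, dt_1\, dt_2$ by $m\cdot\mathcal{R}_2(m)$ and absorbing the lower-order correction $\mathbb{E}[\mathcal{Z}]\ll\sqrt m$ into the same quantity.

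For the core analytic estimate, I would write $K_2(t_1,t_2)$ as the product of the Gaussian joint density $\phi_{t_1,t_2}(0,0) = (2\pi\sqrt{1-r^2})^{-1}$ and the conditional expectation $\mathbb{E}\bigl[|f'(t_1)f'(t_2)|\,\big|\,f(t_1)=f(t_2)=0\bigr]$, the latter depending only on the $4\times 4$ Gaussian covariance assembled from $r$, $r_1$, $r_2$, $r_{12}$, the unit variance of $f$, and $\mathbb{E}[f'(t)^2]\asymp m$; this last variance is what forces the scalings $\sqrt m$ on $r_1, r_2$ and $m$ on $r_{12}$ in the statement. Taylor-expanding $K_2$ in the rescaled small parameters $(r,\,r_1/\sqrt m,\,r_2/\sqrt m,\,r_{12}/m)$ about $(0,0,0,0)$, the zeroth-order term recovers $K_1(t_1)K_1(t_2)$, the linear terms vanish by parity of the Gaussian conditional expectation, and the quadratic remainder yields the pointwise inequality
$$K_2(t_1,t_2) - K_1(t_1)K_1(t_2) \ll m\cdot\Big[r^2 + \Big(\tfrac{r_1}{\sqrt m}\Big)^2 + \Big(\tfrac{r_2}{\sqrt m}\Big)^2 + \Big(\tfrac{r_{12}}{m}\Big)^2\Big]$$
on the ``bulk'' region $\{|r|\le 1-\delta\}$ for any fixed $\delta>0$.

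The principal technical obstacle is the singular locus $\{|r|>1-\delta\}$, where $\phi_{t_1,t_2}(0,0)$ blows up and the non-degeneracy hypothesis of Theorem \ref{kacrice} fails. I would handle it by a standard smoothing argument: perturb $f$ by adding an independent Gaussian of variance $\eta$, apply Theorem \ref{kacrice} to the augmented process (which is uniformly non-degenerate), and pass to the limit $\eta\to 0$ using dominated convergence and the a.s.\ convergence of the zero counts under transversality (generic for smooth Gaussian processes). On the singular set itself one has $r^2\ge (1-\delta)^2$, so that region's contribution is already controlled by $\int\!\!\int r^2$, and a Chebyshev-type bound on the measure of $\{|r|>1-\delta\}$ in terms of $\delta^{-2}\int\!\!\int r^2$ merges the two regimes into the uniform estimate $m\cdot\mathcal{R}_2(m)$, completing the proof.
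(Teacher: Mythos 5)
This proposition is imported by citation from \cite[Proposition 2.2]{ruwiye}; the present paper does not prove it, so there is no in-paper argument to compare against. Evaluated on its own terms, the architecture you sketch --- expressing $\text{Var}(\mathcal{Z})$ via the second factorial moment, invoking Kac--Rice, and Taylor-expanding $K_2 - K_1 K_1$ in the rescaled covariances with the parity of $K_2$ under the simultaneous sign flip $(r,r_1,r_2,r_{12})\mapsto(-r,-r_1,-r_2,-r_{12})$ killing the linear terms --- is indeed the correct engine, and the parity observation is sound. The difficulty, however, lies entirely in the degenerate locus, and your treatment of it has two genuine gaps.

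First, the proposed smoothing does not remove the degeneracy. Adding a single independent Gaussian $\sqrt{\eta}\,W$ to every $f(t)$ produces a process whose two-point correlation is $(r+\eta)/(1+\eta)$, which still equals $1$ wherever $r=1$; the joint law of $(f_\eta(t_1),f_\eta(t_2))$ remains degenerate. If instead one adds an independent process with decorrelating covariance, non-degeneracy is restored but the zero count of $f+g_\eta$ need not converge to that of $f$ as $\eta\to 0$ (a rough perturbation typically creates extra zeros), and no argument is offered for the exchange of limits. Second, the concluding ``Chebyshev merger'' silently requires a uniform pointwise bound $|K_2-K_1K_1|\ll m$ on the set $\{|r|\ge 1-\delta\}$, which is neither established nor true: at an off-diagonal point where $|r|\to 1$, the factor $\phi_{t_1,t_2}(0,0)=(2\pi\sqrt{1-r^2})^{-1}$ blows up, while the conditional moment of $|f'(t_1)f'(t_2)|$ stays of order $m$ (since $r_1=r_2=0$ at an interior maximum of $|r|$), so $K_2\asymp m/\sqrt{1-r^2}$ is unbounded there. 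One cannot therefore bound the singular set's contribution by $m$ times its measure; this is precisely the obstruction that the RWY ``approximate'' Kac--Rice bound is constructed to circumvent, and without a substitute for the exact Kac--Rice identity on that set the argument does not close.
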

\noindent
In \cite[Proposition 1.3]{rudwig}, a precise asymptotic formula is given for the variance of
the nodal intersections number of arithmetic random waves on the two-dimensional torus
against a smooth curve with nowhere zero curvature. The bound of Proposition \ref{approxKR} is sufficient for the purpose of the present work.

From this point on, assume $\mathcal{C}\subset \mathbb{T}^3$ to be a straight line segment as in \eqref{C}. We may thus rewrite \eqref{effegen} as
\begin{equation}
\label{effe}
f(t)=
\frac{1}{\sqrt{N}}
\sum_{\mu\in\mathcal{E}}
a_{\mu}
e^{2\pi it\langle\mu,\alpha\rangle},
\end{equation}
and its covariance function \eqref{rgen} as
\begin{equation}
\label{r}
r(t_1,t_2)
=
\frac{1}{N}\sum_{\mu\in\mathcal{E}} e^{2\pi i(t_1-t_2)\langle\mu,\alpha\rangle}.
\end{equation}
The process \eqref{effe} is {\em stationary}: indeed, \eqref{r} depends on the difference $t_1-t_2$ only. Proposition \ref{approxKR} holds for all smooth curves $\mathcal{C}$, and in particular for straight line segments. We may further reduce our problem to bounding a sum over the lattice points (see Section \ref{auxpfs} for the proof of the following Lemma; cf. \cite[Lemma 6.1]{maff2d}).
\begin{lemma}
\label{rsq}
If $\mathcal{C}\subset \mathbb{T}^3$ is a straight line segment as in \eqref{C}, then we have
\begin{equation*}
\mathcal{R}_2(m)
\ll \frac{1}{N^2}
\sum_{(\mu,\mu')\in\mathcal{E}^2}
\left|\int_0^L e^{2\pi it\langle\mu-\mu',\alpha\rangle}dt\right|^2.
\end{equation*}
\end{lemma}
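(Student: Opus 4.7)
The plan is to compute each of the four integrands in \eqref{2ndmom} by expanding $r,r_1,r_2,r_{12}$ as sums over $\mathcal{E}$ using \eqref{r}, so that $r^2, r_1^2, r_2^2, r_{12}^2$ become double sums over $\mathcal{E}\times\mathcal{E}$ that can be integrated term by term. Since $r(t_1,t_2)$ depends only on $\tau=t_1-t_2$, differentiation gives
\[
r_1 \;=\; \frac{2\pi i}{N}\sum_{\mu\in\mathcal{E}}\langle\mu,\alpha\rangle e^{2\pi i\tau\langle\mu,\alpha\rangle},\qquad r_2 \;=\; -r_1,\qquad r_{12} \;=\; \frac{4\pi^2}{N}\sum_{\mu\in\mathcal{E}}\langle\mu,\alpha\rangle^2 e^{2\pi i\tau\langle\mu,\alpha\rangle}.
\]
Each of $r,r_1,r_2,r_{12}$ is real (either because $F$ is real, or directly because $\mathcal{E}$ is symmetric under $\mu\mapsto-\mu$). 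So I would write, for instance, $r^2=r\cdot\overline{r}$, producing
\[
r^2(\tau) \;=\; \frac{1}{N^2}\sum_{(\mu,\mu')\in\mathcal{E}^2} e^{2\pi i\tau\langle\mu-\mu',\alpha\rangle},
\]
and analogous double sums for $r_1^2$, $r_2^2$, $r_{12}^2$ with extra coefficients $\langle\mu,\alpha\rangle^a\langle\mu',\alpha\rangle^b$ for $(a,b)\in\{(1,1),(1,1),(2,2)\}$.

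The second step is the decisive one: the double integral of each exponential over $[0,L]^2$ factors, because
\[
\int_0^L\!\!\int_0^L e^{2\pi i(t_1-t_2)c}\,dt_1 dt_2 \;=\; \left(\int_0^L e^{2\pi it_1 c}dt_1\right)\overline{\left(\int_0^L e^{2\pi it_2 c}dt_2\right)} \;=\; \left|\int_0^L e^{2\pi itc}dt\right|^2
\]
with $c=\langle\mu-\mu',\alpha\rangle$. Applied term by term this converts each of the four pieces of $\mathcal{R}_2(m)$ into a sum of the desired shape, weighted by $\langle\mu,\alpha\rangle^a\langle\mu',\alpha\rangle^b$.

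Finally, these weights are dispatched by the trivial Cauchy--Schwarz estimate $|\langle\mu,\alpha\rangle|\leq|\mu|\,|\alpha|=\sqrt{m}$, available because $|\alpha|=1$ and $\mu\in\mathcal{E}$ has $|\mu|=\sqrt{m}$. So $|\langle\mu,\alpha\rangle\langle\mu',\alpha\rangle|\leq m$ and $|\langle\mu,\alpha\rangle^2\langle\mu',\alpha\rangle^2|\leq m^2$, which exactly cancel the normalising factors $m^{-1}$ and $m^{-2}$ appearing in $(r_1/\sqrt{m})^2$, $(r_2/\sqrt{m})^2$, $(r_{12}/m)^2$. Summing the four bounds yields the claim with an absolute constant (independent of $\alpha$, $L$ and $m$). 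There is no genuine obstacle: the lemma is a direct unfolding of the stationarity \eqref{r} and the trivial bound $|\langle\mu,\alpha\rangle|\leq\sqrt{m}$, and mirrors verbatim the two-dimensional argument of \cite[Lemma 6.1]{maff2d}.
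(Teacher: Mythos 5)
Your proof is correct and follows essentially the same route as the paper's: expand $r,r_1,r_2,r_{12}$ as sums over $\mathcal{E}$, use stationarity (dependence on $t_1-t_2$ only) to factor the double integral into $\bigl|\int_0^L e^{2\pi it\langle\mu-\mu',\alpha\rangle}\,dt\bigr|^2$, and then bound the weights $\langle\mu,\alpha\rangle^a\langle\mu',\alpha\rangle^b/m^{(a+b)/2}$ by $1$ via the Cauchy--Schwarz estimate $|\langle\mu,\alpha\rangle|\leq\sqrt{m}$. The paper presents the coefficient bound by normalising each $\mu$ to $\mu/|\mu|$ and invoking Cauchy--Schwarz on $\langle\mu/|\mu|,\alpha\rangle$, which is the same estimate phrased slightly differently; there is no substantive difference.
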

\begin{prop}
\label{mainprop}
If $\mathcal{C}\subset \mathbb{T}^3$ is a straight line segment as in \eqref{C}, then we have
\begin{equation}
\label{firststep}
\text{Var}\left(\frac{\mathcal{Z}}{\sqrt{m}}\right)
\ll
\frac{1}{N^2}
\sum_{(\mu,\mu')\in\mathcal{E}^2}
\left|\int_0^L e^{2\pi it\langle\mu-\mu',\alpha\rangle}dt\right|^2.
\end{equation}
\end{prop}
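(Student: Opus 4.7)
The proof is essentially a direct concatenation of the two preceding results, Proposition \ref{approxKR} and Lemma \ref{rsq}, together with the homogeneity of variance under rescaling. My plan is to write out the chain of bounds explicitly.

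First, I would invoke Proposition \ref{approxKR}, which holds for arbitrary smooth toral curves and therefore applies verbatim to the straight line segment $\mathcal{C}$ in \eqref{C}; this yields
\begin{equation*}
\text{Var}(\mathcal{Z}) \ll m \cdot \mathcal{R}_2(m),
\end{equation*}
where $\mathcal{R}_2(m)$ is the second-moment integral defined in \eqref{2ndmom}. Next, since $\mathcal{C}$ is a straight line, the covariance function $r$ takes the stationary form \eqref{r}, so Lemma \ref{rsq} applies and furnishes
\begin{equation*}
\mathcal{R}_2(m) \ll \frac{1}{N^2} \sum_{(\mu,\mu')\in\mathcal{E}^2} \left|\int_0^L e^{2\pi i t\langle \mu - \mu', \alpha\rangle} dt\right|^2.
\end{equation*}

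Finally, I would use the scaling identity $\text{Var}(\mathcal{Z}/\sqrt{m}) = \text{Var}(\mathcal{Z})/m$ to absorb the factor of $m$ from Proposition \ref{approxKR}. Combining the two displays above then produces exactly \eqref{firststep}. There is essentially no obstacle here, as both ingredients are already in hand; the content of the proposition is simply packaging the approximate Kac-Rice bound, specialised via Lemma \ref{rsq} to the stationary setting of a straight line, into the single estimate \eqref{firststep} that will drive the analysis in Sections \ref{rational}, \ref{irrational}, \ref{halfrational} and \ref{conditional}. The only mild point to verify is that the non-degeneracy issue flagged after Theorem \ref{kacrice} is already handled inside Proposition \ref{approxKR} (the approximate Kac-Rice formula was designed precisely to bypass it), so no additional hypothesis on $\alpha$ or $m$ is needed at this stage.
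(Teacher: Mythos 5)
Your proof is correct and is essentially identical to the paper's: both invoke Proposition \ref{approxKR} to get $\text{Var}(\mathcal{Z}/\sqrt{m}) \ll \mathcal{R}_2(m)$ and then apply Lemma \ref{rsq} to bound $\mathcal{R}_2(m)$ by the exponential sum. Your additional remark that the non-degeneracy issue is already absorbed into Proposition \ref{approxKR} is accurate and is precisely why no extra hypotheses are needed here.
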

\begin{proof}[Proof of Proposition \ref{mainprop} assuming Lemma \ref{rsq}]
By Proposition \ref{approxKR} and Lemma \ref{rsq},
\begin{equation*}
\text{Var}\left(\frac{\mathcal{Z}}{\sqrt{m}}\right)
\ll
\mathcal{R}_2(m)
\ll
\frac{1}{N^2}
\sum_{(\mu,\mu')\in\mathcal{E}^2}
\left|\int_0^L e^{2\pi it\langle\mu-\mu',\alpha\rangle}dt\right|^2.
\end{equation*}
\end{proof}

\section{Rational lines: proof of Theorem \ref{resultrat}}
\label{rational}
Recall that $R=\sqrt{m}$, and the notation
\begin{equation*}
\mathcal{E}=\mathcal{E}(m):=\{\mu=(\mu_1,\mu_2,\mu_3)\in\mathbb{Z}^3 : \mu_1^2+\mu_2^2+\mu_3^2=m\}
\end{equation*}
for the lattice point set and
\begin{equation*}
N=N_m:=|\mathcal{E}|=r_3(m)
\end{equation*}
for its cardinality. Moreover, recall that $\kappa(R)$ denotes the maximal number of lattice points in the intersection of $R\mathcal{S}^{2}$ and a plane.
\begin{lemma}
\label{ratlem1}
For $\alpha\in\mathbb{R}^3$,
\begin{equation}
\label{scalprod0}
\#\{(\mu,\mu')\in\mathcal{E}^2: \ \langle\mu-\mu',\alpha\rangle=0\}
\leq
N\cdot \kappa(\sqrt{m})
.
\end{equation}
\end{lemma}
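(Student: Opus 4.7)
The proof should be a short pigeonhole/orbit-counting argument using the definition of $\kappa$ directly; there is no real obstacle here.

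The plan is to decompose the count according to the first coordinate $\mu$ and realize each fiber as a slice of the sphere by an affine hyperplane. Fix $\mu\in\mathcal{E}$; then the condition $\langle\mu-\mu',\alpha\rangle=0$ on $\mu'$ is equivalent to $\langle\mu',\alpha\rangle=\langle\mu,\alpha\rangle$, i.e., $\mu'$ lies on the affine hyperplane
\begin{equation*}
\Pi_\mu:=\{x\in\mathbb{R}^3:\langle x,\alpha\rangle=\langle\mu,\alpha\rangle\}.
\end{equation*}
(Since in our setting $\alpha$ is a unit vector, $\Pi_\mu$ is indeed a hyperplane; for $\alpha=0$ the lemma is vacuous modulo replacing $\kappa(R)$ by $N$.) Since $\mu'\in\mathcal{E}$ means $\mu'\in\mathbb{Z}^3\cap R\mathcal{S}^{2}$ with $R=\sqrt{m}$, we get
\begin{equation*}
\{\mu'\in\mathcal{E}:\langle\mu-\mu',\alpha\rangle=0\}\subseteq \mathbb{Z}^3\cap R\mathcal{S}^{2}\cap\Pi_\mu.
\end{equation*}

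Next I would invoke Definition \ref{kappa} of $\kappa(R)$, which bounds the number of lattice points in any such intersection by $\kappa(\sqrt m)$, uniformly in the choice of hyperplane. Hence for each fixed $\mu\in\mathcal{E}$ there are at most $\kappa(\sqrt m)$ admissible $\mu'$, and summing over the $N$ choices of $\mu$ yields
\begin{equation*}
\#\{(\mu,\mu')\in\mathcal{E}^2:\langle\mu-\mu',\alpha\rangle=0\}\leq N\cdot\kappa(\sqrt m),
\end{equation*}
which is the claimed bound. The only point worth highlighting is that $\kappa$ is defined as a supremum over \emph{all} affine hyperplanes (not merely those through the origin), which is exactly what is needed to apply it to $\Pi_\mu$ for an arbitrary $\mu$.
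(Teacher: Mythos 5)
Your proof is correct and follows essentially the same argument as the paper: fix $\mu$, observe that admissible $\mu'$ lie on the affine plane $\langle x,\alpha\rangle=\langle\mu,\alpha\rangle$, bound the fiber by $\kappa(\sqrt m)$ via Definition \ref{kappa}, and sum over $\mu$. Nothing to flag.
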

\begin{proof}
We rewrite the LHS of \eqref{scalprod0} as
\begin{equation*}
\sum_{\mu\in\mathcal{E}}
\#\{\mu': \ \langle\mu-\mu',\alpha\rangle=0\}
=
\sum_{\mu\in\mathcal{E}}
\#\{\mu': \ \langle\mu',\alpha\rangle=\langle\mu,\alpha\rangle\}
.
\end{equation*}
This means $\mu'$ belongs to the plane
\begin{equation}
\label{plane}
\langle\alpha,(x,y,z)\rangle=\xi,
\end{equation}
where $\xi:=\langle\mu,\alpha\rangle\in\mathbb{R}$. By Definition \ref{kappa}, \eqref{plane} has at most $\kappa(\sqrt{m})$ solutions $(x,y,z)\in\mathcal{E}$. Therefore,
\begin{equation*}
\sum_{\mu\in\mathcal{E}}
\#\{\mu': \ \langle\mu',\alpha\rangle=\langle\mu,\alpha\rangle\}
\leq\sum_{\mu\in\mathcal{E}}\kappa(\sqrt{m})
=
N\cdot \kappa(\sqrt{m})
.
\end{equation*}
\end{proof}

\begin{lemma}
\label{ratlem2}
For rational vectors $\alpha$,
\begin{equation*}
\sum_{\langle\mu-\mu',\alpha\rangle\neq 0}
\frac{1}{\langle\mu-\mu',\alpha\rangle^2}
\ll_\alpha
N\cdot \kappa(\sqrt{m})
.
\end{equation*}
\end{lemma}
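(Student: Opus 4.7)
The plan is to exploit rationality to discretise the inner product $\langle\mu-\mu',\alpha\rangle$, so that the sum reduces to $\sum_{k\neq 0} 1/k^2$ weighted by plane-section counts, and then apply Definition \ref{kappa} exactly as in Lemma \ref{ratlem1}.

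First I would clear denominators in $\alpha$. Since $\alpha$ is rational with $|\alpha|=1$, at least one coordinate is nonzero and there exists a primitive integer vector $v=v(\alpha)\in\mathbb{Z}^3\setminus\{0\}$ and a positive real scalar (in fact $|v|$) such that $\alpha = v/|v|$. Both $v$ and $|v|$ depend only on $\alpha$. For any $\mu,\mu'\in\mathcal{E}\subset\mathbb{Z}^3$ we then have
\begin{equation*}
\langle\mu-\mu',\alpha\rangle = \frac{\langle\mu-\mu',v\rangle}{|v|} = \frac{k}{|v|},
\qquad k:=\langle\mu-\mu',v\rangle\in\mathbb{Z}.
\end{equation*}
In particular $\langle\mu-\mu',\alpha\rangle\neq 0$ if and only if $k\neq 0$, and on the nonzero set $|\langle\mu-\mu',\alpha\rangle|\geq 1/|v|$.

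Next I would group the sum by the integer value $k$:
\begin{equation*}
\sum_{\langle\mu-\mu',\alpha\rangle\neq 0}\frac{1}{\langle\mu-\mu',\alpha\rangle^2}
= |v|^2 \sum_{k\in\mathbb{Z}\setminus\{0\}} \frac{1}{k^2}\,\#\{(\mu,\mu')\in\mathcal{E}^2:\langle\mu-\mu',v\rangle=k\}.
\end{equation*}
For each fixed $k$ and each fixed $\mu\in\mathcal{E}$, the condition on $\mu'$ is $\langle\mu',v\rangle = \langle\mu,v\rangle - k$, a single affine plane in $\mathbb{R}^3$; by Definition \ref{kappa} this plane meets $\mathcal{E}\subset\sqrt{m}\,\mathcal{S}^2$ in at most $\kappa(\sqrt{m})$ points. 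Summing over $\mu\in\mathcal{E}$ gives at most $N\cdot\kappa(\sqrt{m})$ pairs, a bound uniform in $k$ (exactly as in the proof of Lemma \ref{ratlem1}).

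Finally I would combine these ingredients and use $\sum_{k\neq 0} 1/k^2 = 2\zeta(2)<\infty$:
\begin{equation*}
\sum_{\langle\mu-\mu',\alpha\rangle\neq 0}\frac{1}{\langle\mu-\mu',\alpha\rangle^2}
\leq |v|^2 \cdot N\kappa(\sqrt{m}) \cdot 2\zeta(2) \ll_\alpha N\cdot\kappa(\sqrt{m}),
\end{equation*}
since $|v|$ depends only on $\alpha$. There is no real obstacle here; the only thing to be careful about is that the decisive step is rationality, which forces $\langle\mu-\mu',\alpha\rangle$ to take values in the discrete set $\tfrac{1}{|v|}\mathbb{Z}$ and to be bounded away from $0$ when nonzero. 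In the irrational case this fails and one cannot avoid the Diophantine analysis carried out later in the paper.
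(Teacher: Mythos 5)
Your proof is correct and is essentially the paper's own argument: clear denominators to replace $\alpha$ by an integer vector $v$, observe that $\langle\mu-\mu',\alpha\rangle$ then takes values in $\tfrac{1}{|v|}\mathbb{Z}$, group the sum by the integer $k=\langle\mu-\mu',v\rangle$, bound the number of pairs $(\mu,\mu')$ with a given $k$ by $N\kappa(\sqrt m)$ via the plane-section count of Definition~\ref{kappa}, and conclude with $\sum_{k\neq 0}1/k^2<\infty$. The only cosmetic difference is that you write $\alpha=v/|v|$ with $v$ a single primitive integer vector, whereas the paper reaches an integer vector $(a,b,c)$ by an explicit two-step clearing of the denominators $q$ and $s$.
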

\begin{proof}
Up to multiplication by a constant, $\alpha$ has integer components:
\begin{equation*}
(\alpha_1,\alpha_2,\alpha_3)
=
\alpha_1
\cdot
\left(1,\frac{\alpha_2}{\alpha_1},\frac{\alpha_3}{\alpha_1}\right)
=
\alpha_1
\cdot
\left(1,\frac{p}{q},\frac{r}{s}\right)
\end{equation*}
where $p,q,r,s\in\mathbb{Z}$ and $q,s\neq 0$. Then
\begin{equation*}
(\alpha_1,\alpha_2,\alpha_3)
=
\alpha_1
\cdot
\frac{1}{q}
\cdot
\frac{1}{s}
\cdot
(qs,ps,qr)
=
\frac{\alpha_1}{qs}
\cdot
(a,b,c)
\end{equation*}
with $a,b,c\in\mathbb{Z}$. Therefore,
\begin{gather*}
\sum_{\langle\mu-\mu',\alpha\rangle\neq 0}
\frac{1}{\langle\mu-\mu',\alpha\rangle^2}
=
\sum_{\langle\mu-\mu',\alpha\rangle\neq 0}
\frac{1}{(\frac{\alpha_1}{qs})^2\cdot\langle\mu-\mu',(a,b,c)\rangle^2}
\\
\ll_\alpha
\sum_{\langle\mu-\mu',\alpha\rangle\neq 0}
\frac{1}{\langle\mu-\mu',(a,b,c)\rangle^2}
=
\sum_{\mu}
\sum_{k\neq 0}
\sum_{\substack{\mu'\\\langle\mu-\mu',(a,b,c)\rangle=k}}
\frac{1}{k^2}
\\
=
\sum_{\mu}
\sum_{k\neq 0}
\frac{1}{k^2}
\cdot
\#
\{\mu':
\langle (a,b,c),\mu'\rangle=\xi=\xi(\mu,k)\in\mathbb{Z}\}.
\end{gather*}
As $\mu'$ belongs to the plane $ax+by+cz=\xi$, we have at most $\kappa(\sqrt{m})$ solutions. Therefore,
\begin{equation*}
\sum_{\langle\mu-\mu',\alpha\rangle\neq 0}
\frac{1}{\langle\mu-\mu',\alpha\rangle^2}\ll_\alpha\sum_{\mu}
\sum_{k\neq 0}
\frac{\kappa(\sqrt{m})}{k^2}
\ll
\kappa(\sqrt{m})\sum_{\mu}1
=
N\cdot \kappa(\sqrt{m})
.
\end{equation*}
\end{proof}

\begin{proof}[Proof of Theorem \ref{resultrat}]
By Proposition \ref{mainprop}, we have \eqref{firststep}. We remark that, if $\langle\mu-\mu',\alpha\rangle\neq 0$, then (cf. \cite[(6.7) and (6.8)]{maff2d})
\begin{equation}
\label{min}
\left|\int_0^L e^{2\pi it\langle\mu-\mu',\alpha\rangle}dt\right|^2
\ll
\min\left(1,\frac{1}{\langle\mu-\mu',\alpha\rangle^2}\right).
\end{equation}
We separate the summation on the RHS of \eqref{firststep} and apply \eqref{min}:
\begin{equation}
\begin{aligned}
\label{twosums}
\text{Var}\left(\frac{\mathcal{Z}}{\sqrt{m}}\right)
\ll
\frac{1}{N^2}
\left(
\sum_{\langle\mu-\mu',\alpha\rangle=0}1
+
\sum_{\langle\mu-\mu',\alpha\rangle\neq 0}
\left|\int_0^L e^{2\pi it\langle\mu-\mu',\alpha\rangle}dt\right|^2
\right)
\\
\ll
\frac{1}{N^2}
\left(
\#\{(\mu,\mu')\in\mathcal{E}^2: \ \langle\mu-\mu',\alpha\rangle=0\}
+
\sum_{\langle\mu-\mu',\alpha\rangle\neq 0}
\frac{1}{\langle\mu-\mu',\alpha\rangle^2}
\right)
.
\end{aligned}
\end{equation}
Both summands on the RHS of \eqref{twosums} are $\ll N\cdot \kappa(\sqrt{m})$, by Lemmas \ref{ratlem1} and \ref{ratlem2} respectively.
\end{proof}
As mentioned in the Introduction, Theorem \ref{resultrat} loses by the factor $\kappa(\sqrt{m})$ with respect to the 2-dimensional case (cf. \cite[Theorem 1.1]{maff2d}): for on the radius $\sqrt{m}$ circle, the maximal number of lattice points on the same hyperplane (line) is $\kappa_2(\sqrt{m})\leq 2$; on the radius $\sqrt{m}$ sphere, the maximal number of lattice points on the same hyperplane (plane) is $\kappa_3(\sqrt{m})\ll ({\sqrt{m}})^\epsilon$ (recall \eqref{kappa3bound}).

\section{Background on lattice points on spheres}
\label{caps}
We now turn to the case of intersections with irrational lines; we will need upper bounds for the number of lattice points in specific regions of the sphere $R\mathcal{S}^2=\sqrt{m}\mathcal{S}^2$. Recall the notation for the lattice point set $\mathcal{E}$ and number $N$.
\subsection{The total number of lattice points}
\label{seclp}
There is at least one lattice point if and only if $m$ is not of the form $4^l(8k+7)$ for $k$ and $l$ non-negative integers. We have the upper bound (see e.g. \cite[Section 1]{bosaru} and \cite[Section 4]{ruwiye})
\begin{equation*}
N
\ll
(\sqrt{m})^{1+\epsilon} \quad\text{for all} \  \epsilon>0.
\end{equation*}
The condition $m\not\equiv 0,4,7 \pmod 8$ is equivalent to the existence of \textit{primitive} lattice points $(\mu_1,\mu_2,\mu_3)$, meaning $\mu_1,\mu_2,\mu_3$ are coprime (\cite[Section 1]{bosaru}, \cite[Section 4]{ruwiye}). In this case, 
we have both lower and upper bounds
\begin{equation}
\label{totnumlp}
R^{1-\epsilon}
=
(\sqrt{m})^{1-\epsilon}
\ll
N
\ll
(\sqrt{m})^{1+\epsilon}
=
R^{1+\epsilon} \quad\text{for all} \  \epsilon>0.
\end{equation}
This lower bound is ineffective: the behaviour of $N=r_3(m)$ is not completely understood (\cite[Section 1]{bosaru}).
In the final paragraph of Section \ref{rational}, we noted a key difference between lattice points on spheres and on circles, namely, the upper bounds for the number of lattice points lying on a hyperplane. Another key difference between the two- and three-dimensional settings is the total number of lattice points. Recall the notation $_2\mathcal{E}(m)$ \eqref{lpsetd} for the set of all lattice points on the circle of radius $\sqrt{m}$. Their cardinality, i.e. the number ${_2N_m}=r_2(m)$ of ways that $m$ may be written as a sum of two squares, has the upper bound (see \cite[\S 18.7]{harwri})
\begin{equation}
\label{lpcircles}
_2N_m\ll m^\epsilon \quad\text{for all} \  \epsilon>0,
\end{equation}
very different from \eqref{totnumlp}.

\subsection{Lattice points in spherical caps}
\begin{defin}
\label{defcap}
Given a sphere $\Sigma$ in $\mathbb{R}^3$, with centre $O$ and radius $R$, and a point $P\in\Sigma$, we define the \textbf{spherical cap} $T$ centred at $P$ to be the intersection of $\Sigma$ with the ball $B_s(P)$ of radius $s$ centred at $P$. We will call $s$ the \textbf{radius of the cap}, and the unit vector $\beta:=\frac{\overrightarrow{OP}}{R}$ the  \textbf{direction} of $T$.
\\
The intersection of $\Sigma$ with the boundary of $B_s(P)$ is a circle; it will be called the \textbf{base} of $T$, and the {\bf radius of the base} will be denoted $k$. Let $Q,Q'$ be two points on the base which are diametrically opposite (note $\overline{PQ}=\overline{PQ'}=s$): we define the \textbf{opening angle} of $T$ to be $\theta=\widehat{QOQ'}$. The \textbf{height} $h$ of $T$ is the distance between the point $P$ and the base.
\\
Equivalently, $T$ may be defined as the region of the sphere $\Sigma$ delimited by a plane; the intersection of this plane with $\Sigma$ is the base of $T$.
\end{defin}
If $s$, $h$, $k$ and $\theta$ denote the radius, height, radius of the base, and opening angle 
of $T$ respectively, then we have $0\leq s\leq 2R$, $0\leq h\leq 2R$, $0\leq k\leq R$ and $0\leq \theta\leq \pi$. Furthermore, geometric considerations give the relations
\begin{equation}
\label{shR}
k^2+h^2=s^2=2Rh
\end{equation}
and
\begin{equation}
\label{sRtheta}
s=2R\sin\left(\theta/4\right).
\end{equation}
From \eqref{shR} and \eqref{sRtheta} we deduce
\begin{equation}
\label{htheta}
\theta=4\arcsin\left(\sqrt{h/2R}\right).
\end{equation}

We shall denote
\begin{equation}
\label{chi(R,s)}
\chi(R,s)
=\max_T\#\{\mu\in\mathbb{Z}^3\cap T\}
\end{equation}
the maximal number of lattice points belonging to a spherical cap $T\subset R\mathcal{S}^2$ of radius $s$: \eqref{chi(R,s)} is a $3$-dimensional analogue of lattice points on short arcs of a circle.
\begin{lemma}[Bourgain and Rudnick {\cite[Lemma 2.1]{brgafa}}]
\label{lemma2.1}
We have for all $\epsilon>0$,
\begin{equation*}
\chi(R,s)
\ll
R^\epsilon\left(1+\frac{s^2}{R^\frac{1}{2}}\right)
\end{equation*}
as $R\to\infty$.
\end{lemma}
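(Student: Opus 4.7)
The plan is to split into two regimes based on the size of $s$ relative to Jarnik's threshold $R^{1/4}$. Recall that by Jarnik's result quoted just before Conjecture \ref{brgafaconj}, there exists an absolute constant $C>0$ such that every cap on $R\mathcal{S}^2$ of radius $< CR^{1/4}$ has all its lattice points lying on a single plane.

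First, for the small-cap regime $s\leq CR^{1/4}$: all lattice points of any such cap $T$ are coplanar by Jarnik, so they are contained in the intersection of $R\mathcal{S}^2$ with some plane. By Definition \ref{kappa} and the bound \eqref{kappa3bound}, this intersection contains $\leq \kappa(R)\ll R^\epsilon$ lattice points. Thus $\chi(R,s)\ll R^\epsilon$, which matches the claimed bound since $1+s^2/R^{1/2}\geq 1$.

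Second, for the large-cap regime $s > CR^{1/4}$: I cover the given cap $T$ by smaller caps $T_j$ of radius $s_0\asymp R^{1/4}$ (strictly less than $CR^{1/4}$). The height of a cap of radius $\sigma$ on $R\mathcal{S}^2$ is $h=\sigma^2/(2R)$ by \eqref{shR}, so its spherical area is $2\pi R h = \pi\sigma^2$. A standard covering of the bigger cap (of area $\asymp s^2$) by small caps (each of area $\asymp R^{1/2}$) requires
\begin{equation*}
\#\{T_j\}\ll \frac{s^2}{R^{1/2}}
\end{equation*}
pieces (bounded overlap suffices for counting lattice points, possibly at the cost of an absolute constant). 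Each $T_j$ contains $\ll R^\epsilon$ lattice points by the first regime, so summing yields
\begin{equation*}
\chi(R,s)\ll R^\epsilon\cdot\frac{s^2}{R^{1/2}}\ll R^\epsilon\left(1+\frac{s^2}{R^{1/2}}\right),
\end{equation*}
as required.

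The only genuinely nontrivial input is Jarnik's coplanarity statement for sub-$R^{1/4}$ caps; once that is granted, both regimes reduce to area-type counting. The main technical nuisance is organizing the cover by small caps efficiently on the sphere so that the bounded-overlap count $s^2/R^{1/2}$ is clean; this is routine but is the one place a careful geometric argument is needed.
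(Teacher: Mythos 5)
Your argument is correct, and it is essentially the original Bourgain--Rudnick proof: dichotomize at Jarnik's threshold $R^{1/4}$, use coplanarity plus the $\kappa(R)\ll R^\epsilon$ bound on small caps, and cover a large cap (of spherical area $\asymp s^2$, via $s^2=2Rh$) by $\ll s^2/R^{1/2}$ caps of radius $\asymp R^{1/4}$. The one thing to note is that the paper under review does not actually reprove this lemma --- it is imported verbatim from \cite[Lemma 2.1]{brgafa} with no proof given --- so there is no internal argument to compare against; your reconstruction matches the cited source's approach.
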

\noindent
Compare this result with Conjecture \ref{brgafaconj}.


\section{Spherical segments}
\label{segments}
\subsection{Definitions and notation}
\begin{defin}
\label{defseg}
Given a sphere $\Sigma$ in $\mathbb{R}^3$, and two parallel planes $\Pi_1,\Pi_2$ which both have non-empty intersection with $\Sigma$, we call \textbf{spherical segment} $S$ the region of the sphere delimited by $\Pi_1,\Pi_2$. The two \textbf{bases} of $S$ are the circles $\mathcal{B}_1=\Sigma\cap\Pi_1$ and $\mathcal{B}_2=\Sigma\cap\Pi_2$. We always assume that $\mathcal{B}_2$ is the larger of the two bases.
\end{defin}
It will be convenient to always assume a spherical segment $S$ to be contained in a hemisphere. If this is not the case, 
then there exist two spherical segments $S_1$ and $S_2$, each contained in a hemisphere, such that $S_1\cup S_2=S$, $S_1\cap S_2=\mathcal{B}$ with $\mathcal{B}$ a great circle of the sphere. Therefore, a property of $S$ may be derived by working on $S_1$ and $S_2$.

\begin{defin}
\label{defseg2}
Given a spherical segment $S$ with same notation as in Definition \ref{defseg}, we define its \textbf{height} $h$ to be the distance between $\Pi_1$ and $\Pi_2$. We will denote
$k$ the \textbf{radius of the larger base} $\mathcal{B}_2$. Moreover, let $\Gamma$ be a great circle of the sphere $\Sigma$, lying on a plane perpendicular to $\Pi_1$ and $\Pi_2$. Denote $\{A,B\}:=\mathcal{B}_1\cap\Gamma$, $\{C,D\}:=\mathcal{B}_2\cap\Gamma$
and call $O$ the centre of the sphere. We define the \textbf{opening angle} of $S$ to be $\theta=\widehat{AOC}+\widehat{BOD}=2\cdot\widehat{AOC}$.
\end{defin}
Consider the special case when the spherical segment is a cap, i.e. $\mathcal{B}_1$ is a point. With the notation of Definition \ref{defseg2}, since the points $A$ and $B$ coincide, we get $\theta=\widehat{AOC}+\widehat{BOD}=\widehat{COD}$, which is consistent with the definition of the opening angle for a spherical cap (cf. Definition \ref{defcap}). Note that any two of $h,k,\theta$ completely determine $S$ (recall we are assuming the segment to be contained is a hemisphere).
We always have $0\leq h\leq R$, $0\leq k\leq R$ and $0\leq \theta\leq \pi$.
\\
We may also regard a spherical segment $S$ as the difference set of two spherical caps $T_1$ and $T_2$:
\begin{equation*}
S=T_2\setminus T_1.
\end{equation*}
We will need the following lemma later; see Section \ref{auxpfs} for the proof.
\begin{lemma}
\label{kthetah}
Given a spherical segment $S\subset R\mathcal{S}^2$ of height $h(R)$, radius of larger base $k(R)$ and opening angle $\theta(R)$, as $R\to\infty$ we have
\begin{equation*}
k\theta\ll h.
\end{equation*}
\end{lemma}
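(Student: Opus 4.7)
My plan is to reduce to the case where $S$ lies in a closed hemisphere and then parametrise everything by a single ``latitude'' variable, so that the claim becomes an elementary trigonometric inequality. For the reduction, if $S$ straddles the equatorial great circle $\mathcal{B}$ of $\Sigma$ relative to the axis perpendicular to the two planes, split $S = S_1 \cup S_2$ along $\mathcal{B}$ as in the remark following Definition \ref{defseg}. Then the heights add ($h = h_1 + h_2$), the opening angles add ($\theta = \theta_1 + \theta_2$), and each $S_i$ has larger base of radius $R \ge k$, so a uniform bound $R\theta_i \ll h_i$ for each piece gives $k\theta \le R(\theta_1 + \theta_2) \ll h_1 + h_2 = h$.

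For the single-hemisphere case, orient coordinates so that $\Pi_i = \{z = d_i\}$ with $0 \le d_2 \le d_1 \le R$ and the larger base $\mathcal{B}_2$ at height $d_2$. Introducing $\phi_i := \arcsin(d_i/R) \in [0,\pi/2]$, a direct computation of $\vec{OA}\cdot\vec{OC}$ for $A = R(\cos\phi_1,0,\sin\phi_1) \in \mathcal{B}_1\cap\Gamma$ and $C = R(\cos\phi_2,0,\sin\phi_2) \in \mathcal{B}_2\cap\Gamma$ gives $\widehat{AOC} = \phi_1 - \phi_2$, hence
\begin{equation*}
h = R(\sin\phi_1 - \sin\phi_2), \qquad k = R\cos\phi_2, \qquad \theta = 2(\phi_1 - \phi_2).
\end{equation*}
Applying the sum-to-product identity $\sin\phi_1 - \sin\phi_2 = 2\sin\bigl((\phi_1-\phi_2)/2\bigr)\cos\bigl((\phi_1+\phi_2)/2\bigr)$, the ratio factors as
\begin{equation*}
\frac{k\theta}{h} \;=\; \frac{\phi_1 - \phi_2}{\sin\!\bigl((\phi_1-\phi_2)/2\bigr)} \;\cdot\; \frac{\cos\phi_2}{\cos\!\bigl((\phi_1+\phi_2)/2\bigr)}.
\end{equation*}
The first factor is $2u/\sin u$ for $u = (\phi_1 - \phi_2)/2 \in [0,\pi/4]$ and is bounded by an absolute constant (say $\pi/\sqrt{2}$). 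For the second factor, write $a = (\phi_1+\phi_2)/2$, $b = (\phi_1-\phi_2)/2$ and expand $\cos\phi_2 = \cos(a-b) = \cos a\cos b + \sin a\sin b$ to obtain
\begin{equation*}
\frac{\cos\phi_2}{\cos a} \;=\; \cos b + \tan a\,\sin b.
\end{equation*}

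The one genuinely delicate step, and the main obstacle, is controlling this last expression, since $\cos a$ can tend to $0$ when both $\phi_i$ approach $\pi/2$, and the ratio naively looks unbounded. What rescues the argument is the hemisphere constraint $\phi_1 = a + b \le \pi/2$, which gives $\tan a \le \cot b = \cos b/\sin b$ and hence $\tan a \sin b \le \cos b$. Therefore $\cos\phi_2/\cos a \le 2\cos b \le 2$, and combining the two factors yields $k\theta/h \le \pi\sqrt{2}$, an absolute constant independent of $R$, which establishes $k\theta \ll h$.
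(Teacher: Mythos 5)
Your proof is correct, and it takes a genuinely different route from the paper's. The paper writes $S = T_2\setminus T_1$ as a difference of caps, invokes the cap relations \eqref{shR} and \eqref{htheta} to get $k_2\asymp\sqrt{R h_2}$ and $\theta=4[\arcsin\sqrt{h_2/2R}-\arcsin\sqrt{h_1/2R}]$, and then splits into the cases $h\asymp h_2$ and $h=o(h_2)$, handling the second by expanding $\arcsin$ in its Taylor series and summing term by term. Your argument instead parametrises by latitude $\phi_i=\arcsin(d_i/R)$, factors the ratio $k\theta/h$ using the sum-to-product identity, and beats the potential blow-up of $1/\cos\bigl((\phi_1+\phi_2)/2\bigr)$ with the hemisphere constraint $\phi_1\le\pi/2$ via $\tan a\le\cot b$. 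This is more elementary (no series expansion, no case analysis on the relative size of $h$ and $h_2$) and gives an explicit absolute constant $\pi\sqrt{2}$ valid for all $R$, rather than an asymptotic $\ll$ as $R\to\infty$. The only thing worth flagging is that your reduction at the start is invoking the hemisphere convention the paper adopts immediately after Definition \ref{defseg}, so that step is consistent with the paper's setup; you correctly track that after splitting, each piece $S_i$ has larger base the equatorial great circle, so its $k_i=R\ge k$, and the additivity of $h$ and $\theta$ then yields $k\theta\le R(\theta_1+\theta_2)\ll h_1+h_2=h$.
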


\subsection{Lattice points in spherical segments: covering the segment with caps}
\label{segments2}
We want to give an upper bound for the maximal number of lattice points belonging to a spherical segment $S$ of the sphere $R\mathcal{S}^2$,
\begin{equation}
\label{psi}
\psi=\psi(R,h,k,\theta):=\max_S\#\{\mu\in\mathbb{Z}^3\cap S\},
\end{equation}
with $h,k,\theta$ as in Definition \ref{defseg2}.
\begin{prop}
\label{covercapsnew}
Let $S\subset R\mathcal{S}^2$ be a spherical segment of opening angle $\theta$ and radius of larger base $k$. Then for every real number $0<\Omega<R$,
\begin{equation}
\label{Omega}
\psi\leq
\chi(R,(2\pi+1/2)\Omega)
\cdot
\left\lceil\frac{k}{\Omega}\right\rceil
\cdot
\left\lceil \frac{R\theta}{\Omega}\right\rceil
\end{equation}
with $\chi(R,\cdot)$ as in \eqref{chi(R,s)}.
\end{prop}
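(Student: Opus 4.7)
The plan is to tile the segment $S$ by a grid of curvilinear rectangles, enclose each in a spherical cap of radius $(2\pi+1/2)\Omega$, and apply \eqref{chi(R,s)} to each cap. By the convention adopted just before Definition~\ref{defseg2}, I may assume $S$ is contained in a hemisphere. I introduce spherical coordinates $(\phi,\psi)$ with the polar axis perpendicular to the bases $\Pi_1,\Pi_2$, so that $S = \{R(\sin\phi\cos\psi, \sin\phi\sin\psi, \cos\phi) : \phi_1\leq\phi\leq\phi_2,\ \psi\in[0,2\pi)\}$. A direct computation with the points $A, C \in \Gamma$ of Definition~\ref{defseg2} gives $\widehat{AOC} = \phi_2-\phi_1 = \theta/2$, so the polar arc length across $S$ equals $R\theta/2$, and every latitude circle inside $S$ has circumference at most $2\pi k$.

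Next, I partition $[\phi_1,\phi_2]$ into $N_\phi := \lceil R\theta/\Omega\rceil$ equal sub-intervals and $[0,2\pi)$ into $N_\psi := \lceil k/\Omega\rceil$ equal sub-arcs, producing $N_\phi N_\psi$ curvilinear rectangles that cover $S$. Each rectangle has $\phi$-arc length at most $(R\theta/2)/N_\phi \leq \Omega/2$ and, at its widest latitude, $\psi$-arc length at most $(2\pi k)/N_\psi \leq 2\pi\Omega$. These two dimensions sum to $(2\pi+1/2)\Omega$, which is exactly the cap radius appearing in the target estimate.

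The key geometric step is to enclose each rectangle in a cap. I place a cap centre $P$ at one of the rectangle's corners: then any other point of the rectangle is reached from $P$ by concatenating a meridional arc of length at most $\Omega/2$ with a latitudinal arc of length at most $2\pi\Omega$, so the geodesic distance, and \emph{a fortiori} the Euclidean distance, from $P$ to any point of the rectangle is at most $(2\pi+1/2)\Omega$. Thus each rectangle lies in a spherical cap of radius $(2\pi+1/2)\Omega$ in the sense of Definition~\ref{defcap}. Bounding the lattice point count in each of the $N_\phi N_\psi$ caps by $\chi(R,(2\pi+1/2)\Omega)$ and summing yields \eqref{Omega}. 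I expect the only step requiring care to be this triangle-inequality embedding of a ``spherical rectangle'' into a cap; the rest is direct enumeration.
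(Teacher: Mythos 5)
Your proof is correct and follows essentially the same route as the paper: you tile $S$ by meridians (constant $\psi$) and parallels (constant $\phi$) into $\lceil k/\Omega\rceil\cdot\lceil R\theta/\Omega\rceil$ curvilinear rectangles, and you enclose each in a cap of radius $(2\pi+1/2)\Omega$ centred at a corner by the same two-leg triangle-inequality estimate (a meridional arc of length $\leq\Omega/2$ followed by a latitudinal arc of length $\leq 2\pi\Omega$, using that every latitude circle inside the hemispherical segment has radius $\leq k$). The paper phrases this in synthetic language with great semicircles $\Gamma_i$ and parallel circles $\Lambda_j$, but the decomposition and the bound on the covering caps are identical to yours.
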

\begin{proof}
Given a real number $0<\Omega<R$, we will partition $S$ into regions $\mathcal{R}_{ij}$ (described below), and then cover each $\mathcal{R}_{ij}$ with a spherical cap of radius $(2\pi+1/2)\Omega$. Therefore, $\psi$ does not exceed the number of lattice points $\chi(R,(2\pi+1/2)\Omega)$ in a cap, times the number of caps.
\\
The partitioning is done as follows. Denote $\mathcal{B}_1,\mathcal{B}_2$ the two bases of $S$, lying on the parallel planes $\Pi_1,\Pi_2$ respectively; the larger base $\mathcal{B}_2$ has radius $k$. Consider a set of great semicircles
\begin{equation*}
\left\{\Gamma_i, 1\leq i\leq \left\lceil\frac{k}{\Omega}\right\rceil\right\}
\end{equation*}
lying on planes all perpendicular to $\Pi_1,\Pi_2$, and chosen so that they partition the circle $\mathcal{B}_2$ into $\left\lceil\frac{k}{\Omega}\right\rceil$ identical arcs 
each of length
\begin{equation*}
\delta:=
\frac{2\pi k}{\left\lceil\frac{k}{\Omega}\right\rceil}\leq 2\pi \Omega.
\end{equation*}
For $1\leq i\leq\left\lceil\frac{k}{\Omega}\right\rceil$, 
the arcs $S\cap \Gamma_i$ have length $\frac{R\theta}{2}$. Moreover, let
\begin{equation*}
\left\{\Lambda_j, 1\leq j\leq \left\lfloor \frac{R\theta}{\Omega}\right\rfloor\right\}
\end{equation*}
be a set of circles on $R\mathcal{S}^2$, all lying on planes parallel to $\Pi_1,\Pi_2$, that partition each arc $S\cap \Gamma_i$ into $\left\lceil \frac{R\theta}{\Omega}\right\rceil$ identical smaller arcs of length
\begin{equation*}
\eta:=
\frac{R\theta}{2\left\lceil \frac{R\theta}{\Omega}\right\rceil}\leq\frac{1}{2} \Omega.
\end{equation*}
Notice that the $\Gamma_i$'s and $\Lambda_j$'s partition $S$ into
\begin{equation}
\label{numcapsnew}
\left\lceil\frac{k}{\Omega}\right\rceil\cdot\left\lceil \frac{R\theta}{\Omega}\right\rceil
\end{equation}
regions $\mathcal{R}_{ij}\subset R\mathcal{S}^2$. We now show that each $\mathcal{R}_{ij}$ may be covered by a spherical cap of radius $(2\pi+1/2)\Omega$.
\\
We will use the notation $\overset{\frown\Lambda}{AB}$ for an arc of a circle $\Lambda$ of the sphere $R\mathcal{S}^2$, of endpoints $A$ and $B$. The arc $\overset{\frown\Lambda}{AB}$ is a geodesic if and only if $\Lambda$ is a great circle of $R\mathcal{S}^2$. In this case, we will simply write $\overset{\frown}{AB}$. The region $\mathcal{R}_{ij}$ is delimited by four arcs:
\begin{equation*}
\overset{\displaystyle{\quad \frown_{\Lambda_j}}}{AB}\subset\Lambda_j,\quad\overset{\displaystyle{\frown}}{BC}\subset\Gamma_i,\quad\overset{\quad\displaystyle{\frown_{\Lambda_{j+1}}}}{CD}\subset\Lambda_{j+1},\quad\overset{\displaystyle{\frown}}{AD}\subset\Gamma_{i+1}.
\end{equation*}
By the construction of the circles $\{\Gamma_i\}_i$ and $\{\Lambda_j\}_j$, we have the relations
\begin{equation*}
\overset{\displaystyle{\quad \frown_{\Lambda_j}}}{AB}
<
\overset{\quad\displaystyle{\frown_{\Lambda_{j+1}}}}{CD}
\leq\delta,\quad
\overset{\displaystyle{\frown}}{BC}
=
\overset{\displaystyle{\frown}}{AD}
=\eta.
\end{equation*}
Given any point $P\in\mathcal{R}_{ij}$, we denote $\overline{AP}$ the euclidean distance between $A$ and $P$. Let us show that $\overline{AP}\leq(2\pi+1/2)\Omega$, so that $\mathcal{R}_{ij}$ may be covered by the spherical cap of radius $(2\pi+1/2)\Omega$ centred at $A$. Let $\Lambda_P$ be the circle on $R\mathcal{S}^2$ containing $P$ and lying on a plane parallel to $\Pi_1,\Pi_2$. Let $Q$ be the intersection between $\Lambda_P$ and $\overset{\frown}{AD}$. The euclidean distance between $A$ and $P$ is less than the length of the geodesic $\overset{\frown}{AP}$, which, in turn, is less that the sum of the lengths of the geodesics $\overset{\frown}{AQ}$ and $\overset{\frown}{QP}$. Moreover, we have
\begin{equation*}
\overset{\displaystyle{\frown}}{QP}\leq\overset{\quad\displaystyle{\frown_{\Lambda_P}}}{QP}\leq\overset{\quad\displaystyle{\frown_{\Lambda_{j+1}}}}{CD}\leq\delta\leq 2\pi \Omega
\qquad\text{and}\qquad
\overset{\displaystyle{\frown}}{AQ}\leq\overset{\displaystyle{\frown}}{AD}=\eta\leq\frac{1}{2} \Omega
.
\end{equation*}
It follows that, as desired,
\begin{equation*}
\overline{AP}<\overset{\displaystyle{\frown}}{AP}\leq\overset{\displaystyle{\frown}}{AQ}+\overset{\displaystyle{\frown}}{QP}\leq \left(2\pi+\frac{1}{2}\right)\Omega.
\end{equation*}
The total number of caps equals the number of regions \eqref{numcapsnew}; therefore,
\begin{equation*}
\psi
\leq 
\chi(R,(2\pi+1/2)\Omega)
\cdot
\left\lceil\frac{k}{\Omega}\right\rceil
\cdot
\left\lceil \frac{R\theta}{\Omega}\right\rceil
.
\end{equation*}
\end{proof}

\begin{cor}
\label{covercapscor1}
Let $S\subset R\mathcal{S}^2$ be a spherical segment of opening angle $\theta$ and radius of larger base $k$. Then for every real number $0<\Omega<R$ and for every $\epsilon>0$, we have unconditionally
\begin{equation}
\label{cor1}
\psi\ll R^{\epsilon}
\left(1+\frac{\Omega^2}{R^{\frac{1}{2}}}\right)
\cdot
\left\lceil\frac{k}{\Omega}\right\rceil
\cdot
\left\lceil \frac{R\theta}{\Omega}\right\rceil.
\end{equation}
\end{cor}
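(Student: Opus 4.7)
The plan is to simply combine Proposition \ref{covercapsnew} with the unconditional lattice-point-in-cap bound of Lemma \ref{lemma2.1}. The former provides a covering of the spherical segment $S$ by
\begin{equation*}
\left\lceil\frac{k}{\Omega}\right\rceil \cdot \left\lceil\frac{R\theta}{\Omega}\right\rceil
\end{equation*}
spherical caps of radius $(2\pi+1/2)\Omega$, yielding
\begin{equation*}
\psi \leq \chi(R,(2\pi+1/2)\Omega) \cdot \left\lceil\frac{k}{\Omega}\right\rceil \cdot \left\lceil\frac{R\theta}{\Omega}\right\rceil,
\end{equation*}
and the latter controls the cap contribution.

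First I would apply Lemma \ref{lemma2.1} with the radius $s=(2\pi+1/2)\Omega$. This gives, for all $\epsilon>0$,
\begin{equation*}
\chi(R,(2\pi+1/2)\Omega) \ll R^{\epsilon}\left(1+\frac{(2\pi+1/2)^2\,\Omega^2}{R^{1/2}}\right) \ll R^{\epsilon}\left(1+\frac{\Omega^2}{R^{1/2}}\right),
\end{equation*}
where the absolute constant $(2\pi+1/2)^2$ is absorbed into the implied constant. Substituting this estimate into the bound from Proposition \ref{covercapsnew} yields exactly \eqref{cor1}. One does need the hypothesis $0<\Omega<R$ so that the cap radius $(2\pi+1/2)\Omega$ stays within the regime where Lemma \ref{lemma2.1} is applied (in any event, for $\Omega$ comparable to $R$ or larger the bound becomes trivial since the whole sphere has only $\ll R^{1+\epsilon}$ lattice points by \eqref{totnumlp}).

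There is essentially no obstacle here: Proposition \ref{covercapsnew} does the geometric work of decomposing the segment into caps, and Lemma \ref{lemma2.1} provides the arithmetic input. The only minor point worth noting is the absorption of the numerical constant from the cap radius, which is harmless thanks to the $R^{\epsilon}$ factor. The corollary should therefore follow in one short paragraph.
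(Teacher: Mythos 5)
Your proposal is correct and coincides with the paper's own proof: both apply Lemma \ref{lemma2.1} with $s=(2\pi+1/2)\Omega$, absorb the numerical constant into the implied constant, and substitute into the bound \eqref{Omega} of Proposition \ref{covercapsnew}. There is nothing to add.
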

\begin{proof}
By Lemma \ref{lemma2.1}, we may unconditionally insert the bound
\begin{equation*}
\chi(R,(2\pi+1/2)\Omega)
\ll
R^\epsilon\left(1+\frac{((2\pi+1/2)\Omega)^2}{R^\frac{1}{2}}\right)
\ll
R^\epsilon\left(1+\frac{\Omega^2}{R^\frac{1}{2}}\right)
\end{equation*}
into \eqref{Omega}, obtaining \eqref{cor1}.
\end{proof}

\begin{cor}
\label{covercapscor2}
Assume Conjecture \ref{brgafaconj}. Let $S\subset R\mathcal{S}^2$ be a spherical segment of height $h$ and radius of larger base $k$. Then for every $\epsilon>0$,
\begin{equation*}
\psi\ll R^{\epsilon}
\cdot
(R^{1/2}+h).
\end{equation*}
\end{cor}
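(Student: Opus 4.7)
The plan is to mimic the proof of Corollary \ref{covercapscor1}, but invoke Conjecture \ref{brgafaconj} in place of Lemma \ref{lemma2.1}, and make a specific choice of the free parameter $\Omega$ in Proposition \ref{covercapsnew}. The natural sweet spot is $\Omega = R^{1/2}$: small enough that the spherical cap appearing in Proposition \ref{covercapsnew} contains only $R^{\epsilon}$ lattice points under the conjecture, yet large enough that the two ceiling factors are manageable.

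First, I would apply Proposition \ref{covercapsnew} with $\Omega=R^{1/2}$ to get
\begin{equation*}
\psi \leq \chi\bigl(R,(2\pi+1/2)R^{1/2}\bigr)\cdot\left\lceil \frac{k}{R^{1/2}}\right\rceil\cdot\left\lceil R^{1/2}\theta\right\rceil.
\end{equation*}
Since $(2\pi+1/2)R^{1/2}<R^{1-\delta}$ for any fixed $\delta<1/2$ and all sufficiently large $R$, Conjecture \ref{brgafaconj} applies and yields
\begin{equation*}
\chi\bigl(R,(2\pi+1/2)R^{1/2}\bigr)\ll R^{\epsilon}\left(1+\frac{((2\pi+1/2)R^{1/2})^{2}}{R}\right)\ll R^{\epsilon}.
\end{equation*}

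Next, I would expand the product of the two ceilings using $\lceil x\rceil\leq x+1$:
\begin{equation*}
\left\lceil \frac{k}{R^{1/2}}\right\rceil\left\lceil R^{1/2}\theta\right\rceil\leq \left(1+\frac{k}{R^{1/2}}\right)\bigl(1+R^{1/2}\theta\bigr)=1+\frac{k}{R^{1/2}}+R^{1/2}\theta+k\theta.
\end{equation*}
Three trivial bounds control each summand: $k\leq R$ gives $k/R^{1/2}\ll R^{1/2}$; the inequality $\theta\leq\pi$ gives $R^{1/2}\theta\ll R^{1/2}$; and Lemma \ref{kthetah} gives $k\theta\ll h$. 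Combining yields that the product of ceilings is $\ll R^{1/2}+h$.

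Putting the two bounds together gives $\psi\ll R^{\epsilon}(R^{1/2}+h)$, as desired. There is no serious obstacle here — the work has been done in Proposition \ref{covercapsnew} and Lemma \ref{kthetah}, and the conjecture does the rest. The only real content is choosing $\Omega=R^{1/2}$; picking $\Omega$ larger would make $\chi$ grow like $\Omega^{2}/R$, while picking $\Omega$ smaller would blow up the $kR\theta/\Omega^{2}$ cross-term beyond $h$, so this balance is forced by the three competing terms.
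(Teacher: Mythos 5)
Your proof is correct and follows essentially the same route as the paper: apply Proposition \ref{covercapsnew}, use Conjecture \ref{brgafaconj} to bound $\chi$, choose $\Omega=R^{1/2}$, expand the ceilings, and conclude with Lemma \ref{kthetah} for the $k\theta$ term. The one small service you add is explicitly checking the side condition $s<R^{1-\delta}$ needed to invoke the conjecture; the paper leaves this implicit.
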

\begin{proof}
The opening angle 
of the spherical segment $S$ shall be denoted $\theta$.
By Proposition \ref{covercapsnew}, we have \eqref{Omega} for every real number $0<\Omega<R$.
By Conjecture \ref{brgafaconj}, it follows that, for every $\epsilon>0$,
\begin{equation*}
\psi\ll R^{\epsilon}
\cdot
\left(1+\frac{((2\pi+1/2)\Omega)^2}{R}\right)
\cdot
\left(1+\frac{k}{\Omega}\right)
\cdot
\left(1+\frac{R\theta}{\Omega}\right).
\end{equation*}
We take $\Omega=R^{1/2}$:
\begin{equation*}
\psi\ll R^{\epsilon}
\cdot
\left(1+\frac{k}{R^{1/2}}\right)
\cdot
(1+R^{1/2}\theta)
=
R^{\epsilon}
\left(1+\frac{k}{R^{1/2}}+R^{1/2}\theta+k\theta\right)
.
\end{equation*}
Since $0\leq k\leq R$ and $0\leq \theta\leq \pi$,
\begin{equation*}
\psi\ll R^{\epsilon}
(R^{1/2}+k\theta)
.
\end{equation*}
Finally, by Lemma \ref{kthetah}, we have $k\theta\ll h$.
\end{proof}

\section{Lattice points in spherical segments: Diophantine approximation}
\label{segmentsbis}
Recall the notation $\psi$ \eqref{psi} for the maximal number of lattice points lying on a spherical segment $S\subset R\mathcal{S}^2$ of height $h$, radius of larger base $k$, and opening angle $\theta$. The goal of this section is to prove a bound for $\psi$ which depends only on $\theta$. Recall Definition \ref{kappa} for $\kappa(R)$, and Definition \ref{defcap} for the direction $\beta$ of a spherical cap.
\begin{defin}
The \textbf{direction} of a spherical segment $S$ is the unit vector $\beta=(\beta_1,\beta_2,\beta_3)$ which is the direction of the two spherical caps $T_1,T_2$ satisfying
\begin{equation*}
S=T_2\setminus T_1.
\end{equation*}
\end{defin}
\begin{prop}
\label{lpseg}
Let $S\subset R\mathcal{S}^2$ be a spherical segment of opening angle $\theta$, radius of larger base $k$, and direction $\beta$, with $\frac{\beta_2}{\beta_1},\frac{\beta_3}{\beta_1}\in\mathbb{R}\setminus\mathbb{Q}$. Then the number of lattice points lying on $S$ satisfies
\begin{equation*}
\psi\ll \kappa(R)(1+R\cdot\theta^{1/3})
\end{equation*}
for $\theta\to 0$, the implied constant being absolute.
\end{prop}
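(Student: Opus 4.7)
My plan is to reduce to a count of distinct $\beta$-projections of lattice points in $\mathcal{E}\cap S$ and then combine a covering by small caps with the Diophantine hypothesis on $\beta$. First I set $M=\{\langle\mu,\beta\rangle:\mu\in\mathcal{E}\cap S\}$ and $N_p=|M|$. Each value $\xi\in M$ corresponds to a plane $\Pi_\xi=\{x\in\mathbb{R}^3:\langle x,\beta\rangle=\xi\}$ perpendicular to $\beta$, which by Definition \ref{kappa} contains at most $\kappa(R)$ lattice points of $\mathcal{E}$. Partitioning $\mathcal{E}\cap S$ by the $\beta$-projection gives $\psi\leq\kappa(R)\cdot N_p$, so it suffices to prove $N_p\ll 1+R\theta^{1/3}$.

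A natural first attempt is to apply Proposition \ref{covercapsnew} with $\Omega$ a small multiple of $R^{1/4}$, so that each covering cap has radius below the Jarnik threshold $CR^{1/4}$ recalled in Section \ref{caps}, and hence (by Jarnik) contains at most $\kappa(R)$ lattice points of $\mathcal{E}$ all lying on a single affine plane. This however yields only $\psi\ll\kappa(R)(R^{3/4}+R^{3/2}\theta)$, which is weaker than the target $\kappa(R)(1+R\theta^{1/3})$ both for $\theta<R^{-3/4}$ (where $R^{3/4}$ dominates) and for $\theta>R^{-3/4}$ (where $R^{3/2}\theta$ dominates). To reach the claimed exponent one must exploit the hypothesis $\beta_2/\beta_1,\beta_3/\beta_1\in\mathbb{R}\setminus\mathbb{Q}$ in a genuinely arithmetic way.

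The key observation is that a difference $\nu=\mu-\mu'$ of two points in $\mathcal{E}\cap S$ with $\langle\mu,\beta\rangle\neq\langle\mu',\beta\rangle$ satisfies both the slab bound $|\langle\nu,\beta\rangle|\leq h$ and the sphere identity $\langle\nu,\mu+\mu'\rangle=0$, since $|\mu|^2=|\mu'|^2=R^2$. This orthogonality forces $\nu$ to lie in the rank-$2$ sublattice $(\mu+\mu')^\perp\cap\mathbb{Z}^3$. Combining this rigidity with the slab thickness, the irrationality of the ratios (which prevents any nonzero integer vector from being exactly parallel to $\beta$), and a geometry-of-numbers argument should produce an effective gap of order $\theta^{2/3}$ between consecutive values in $M$, whence $N_p\leq h/\theta^{2/3}+1\ll 1+R\theta^{1/3}$. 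The main obstacle is that the hypothesis on $\beta$ is only qualitative, yet the implied constant is claimed absolute; the argument must therefore rely on the rigid integer/sphere structure—particularly the identity $\langle\nu,\mu+\mu'\rangle=0$ together with a Minkowski-style successive-minima analysis of the sublattices $(\mu+\mu')^\perp\cap\mathbb{Z}^3$ as $\mu+\mu'$ varies—rather than on any quantitative approximation property of $\beta$.
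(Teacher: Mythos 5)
Your reduction $\psi \leq \kappa(R)\cdot N_p$ (with $N_p$ the number of distinct $\beta$-projections of lattice points in $S$) is a valid inequality, and you correctly diagnose that a naive cap-covering at Jarnik scale $\Omega\asymp R^{1/4}$ yields only the weaker bound $\kappa(R)(R^{3/4}+R^{3/2}\theta)$. The second half of your argument, however, rests on a misconception. You assert that because the Diophantine hypothesis on $\beta$ is ``only qualitative'' while the implied constant is claimed absolute, the proof must avoid quantitative Diophantine approximation of $\beta$. This is exactly backwards: the paper proves Proposition~\ref{lpseg} \emph{by} quantitative Diophantine approximation. Dirichlet's simultaneous approximation theorem (Proposition~\ref{dirichsim}) supplies, for \emph{every} pair of irrationals and \emph{every} $H\geq 1$, integers $q,p_1,p_2$ with $1\leq q\leq H^2$ and $|\zeta_i-p_i/q|<1/(qH)$, with a bound uniform in the irrationals. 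The paper exploits this (Lemma~\ref{diophapproxlemma1}) to produce a rational unit vector $a/|a|$ at angular distance $\varphi\ll 1/(|a|H)$ from $\beta$ with $|a|\leq 3H^2$, encloses $S$ in a slightly larger segment or cap of direction $a/|a|$ (Lemma~\ref{lemmaclaim}), and slices that region by the lattice-point-carrying planes orthogonal to $a$, which are spaced at least $1/|a|$ apart (Proposition~\ref{ratsect}). Optimizing $|a|\theta+|a|\varphi\ll H^2\theta+1/H$ at $H\asymp\theta^{-1/3}$ yields the exponent $1/3$, and the absolute constant is inherited directly from Dirichlet.

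The alternative you sketch — an effective gap $|\langle\mu-\mu',\beta\rangle|\gg\theta^{2/3}$ deduced from $\langle\mu-\mu',\mu+\mu'\rangle=0$ and a Minkowski-style analysis of the sublattices $(\mu+\mu')^\perp\cap\mathbb{Z}^3$ — cannot work without quantitative control on $\beta$. The orthogonality constrains $\nu=\mu-\mu'$ to a rank-$2$ sublattice but places no lower bound on $|\langle\nu,\beta\rangle|$: for $\beta$ whose ratios are Liouville-type irrationals, one can choose $R$ and $\nu\in\mathbb{Z}^3$ with $|\nu|\leq 2R$ making $|\langle\nu,\beta\rangle|$ positive but smaller than any prescribed $\theta^{2/3}$. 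There is also a structural objection: when $1,\beta_2/\beta_1,\beta_3/\beta_1$ are $\mathbb{Q}$-linearly independent (the generic situation), no two lattice points share a $\beta$-projection, so $N_p=\psi$ and your chain would need to deliver $\psi\ll 1+R\theta^{1/3}$ with no $\kappa(R)$ factor at all — a strictly stronger statement than Proposition~\ref{lpseg}, which there is no reason to believe. In short, the first reduction is fine but the decisive step — bounding the number of distinct projections — is not carried out, and the heuristic you offer for it points in a direction that does not close.
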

\noindent
The proof of this result will be given at the end of the present section, following some preparation; we will apply the ideas of \cite[Lemma 2.3]{brgafa}. Firstly, we shall consider a spherical cap $T$ or segment $S'$, containing $S$, and of direction a rational vector $\frac{a}{|a|}$, where $a_1,a_2,a_3$ are parameters. Thus
\begin{equation*}
\psi\leq \#\{\text{lattice points in } T \text{ or } S'\}.
\end{equation*}
We will then have to work with a larger portion of the sphere; however, as the new cap or segment's direction is a rational vector, the `slicing' method of \cite{brgafa} may be applied; thus we will show
\begin{equation*}
\psi\ll\kappa(R)\cdot[1+R|a|(\theta+\varphi)]
\end{equation*}
where $a=(a_1,a_2,a_3)\in\mathbb{Z}^3$, $\varphi$ is the angle between $\beta$ and $a$. Finally, to minimise the quantity $|a|(\theta+\varphi)$, we will choose values for the parameters $a_1,a_2,a_3\in\mathbb{Z}$ such that both $|a|$ and $\varphi$ are small, applying Diophantine approximation. Let us commence this preparatory work.

To bound the number of lattice points in a spherical segment of direction a {\em rational vector}, we apply the `slicing' method of \cite[proof of Lemma 2.3]{brgafa}; see also Yesha \cite[Lemma A.1]{yesh13}.
\begin{prop}
\label{ratsect}
Let $S\subset R\mathcal{S}^2$ be a spherical segment of height $h$, radius of larger base $k$, and direction a rational vector $\frac{b}{|b|}$, where $b\in\mathbb{Z}^3$. Then, for any $0\leq h\leq R$,
\begin{equation}
\label{slicing}
\psi\leq \kappa(R)\cdot(1+|(b_1,b_2,b_3)|\cdot h).
\end{equation}
In particular, $\forall\epsilon>0$,
\begin{equation}
\label{slicingcor}
\psi\ll_{b} R^\epsilon\cdot(1+h).
\end{equation}
\end{prop}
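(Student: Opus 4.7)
The plan is the ``slicing'' strategy of Bourgain--Rudnick: foliate the spherical segment $S$ by planes orthogonal to its direction $b/|b|$, control how many such planes can contain lattice points using the integrality of $b$, and then bound the lattice points on each plane by the definition of $\kappa(R)$.

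Concretely, any point $\mu \in \mathbb{Z}^3 \cap S$ lies between the two parallel bounding planes of $S$, both perpendicular to $b/|b|$. Hence $\langle \mu, b/|b|\rangle$ takes values in an interval of length $h$, equivalently $\langle \mu, b\rangle \in [c, c + |b| h]$ for some real constant $c$ determined by the position of $S$. Since $b \in \mathbb{Z}^3$ and $\mu \in \mathbb{Z}^3$, the quantity $\langle \mu, b\rangle$ is an integer, so it can assume at most $\lfloor |b| h\rfloor + 1 \leq 1 + |b| h$ distinct values.

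For each admissible integer value $n$, the lattice points $\mu \in \mathcal{E} \cap S$ satisfying $\langle \mu, b\rangle = n$ lie in the intersection of the sphere $R\mathcal{S}^2$ with the hyperplane $\{x \in \mathbb{R}^3 : \langle x, b\rangle = n\}$, and by Definition \ref{kappa} there are at most $\kappa(R)$ such lattice points. Multiplying the two contributions yields \eqref{slicing}. The corollary \eqref{slicingcor} then follows immediately by inserting Jarnik's bound $\kappa(R) \ll R^\epsilon$ from \eqref{kappa3bound} and absorbing the fixed factor $|b|$ into the implied constant (which is allowed to depend on $b$).

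There is no real obstacle here; the only point that deserves a moment of care is to keep the ``$+1$'' in the count of admissible values of $\langle \mu, b\rangle$, so that the estimate remains valid when $|b| h < 1$ and the segment could still contain lattice points on the boundary planes themselves.
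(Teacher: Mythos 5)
Your proof is correct and follows essentially the same slicing argument as the paper: both decompose $S$ into hyperplane slices $\langle b,x\rangle = n$, bound the lattice points per slice by $\kappa(R)$, and count the admissible integers $n$. The only cosmetic difference is that you count integers directly in the interval $[c, c+|b|h]$, whereas the paper arrives at the same bound $\nu(h,b)\leq 1+|b|h$ by first computing that adjacent planes carrying lattice points are spaced $\geq 1/|b|$ apart; your version is a touch more direct but amounts to the same observation.
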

\begin{proof}
Since $b\in\mathbb{Z}^3$, then for all lattice points $\mu$, we have $\langle b,\mu\rangle=n\in\mathbb{Z}$, hence each lattice point on $S$ belongs to a plane
\begin{equation}
\label{planed}
\langle(b_1,b_2,b_3),(x,y,z)\rangle=n
\end{equation}
intersecting $S$.
It follows that $\psi$ is bounded by the number $\nu(h,b)$ of planes \eqref{planed} intersecting $S$ times the number of lattice points lying on each plane. Therefore, recalling Definition \ref{kappa}, we have
\begin{equation}
\label{nu}
\psi\leq \nu(h,b)\cdot\kappa(R).
\end{equation}
It remains to bound $\nu(h,b)$. We claim that the minimal distance between two adjacent planes \eqref{planed} both containing at least one lattice point is $\frac{n'}{|(b_1,b_2,b_3)|}$, $n'$ being a positive integer. Indeed, consider two planes
\begin{equation*}
\langle(b_1,b_2,b_3),(x,y,z)\rangle=n \qquad\text{ and }\qquad \langle(b_1,b_2,b_3),(x,y,z)\rangle=n+n',
\end{equation*}
each containing at least one lattice point, with $n'$ positive and as small as possible. Fix any point $P$ on the former of these two planes, and a point $Q$ on the latter so that the line through $P,Q$ is orthogonal to the planes. The sought distance is thus $|Q-P|$. We have
\begin{numcases}{}
\label{systeq1}
b_1x_P+b_2y_P+b_3z_P=n
\\
\label{systeq2}
b_1x_Q+b_2y_Q+b_3z_Q=n+n'
\\
\label{systeq3}
Q=P+\lambda (b_1,b_2,b_3),
\end{numcases}
which yields $|Q-P|=|\lambda\cdot(b_1,b_2,b_3)|$, with $\lambda$ to be determined. By subtracting \eqref{systeq1} from \eqref{systeq2}:
\begin{equation*}
b_1(x_Q-x_P)+b_2(y_Q-y_P)+b_3(z_Q-z_P)=n'
\end{equation*}
i.e.,
\begin{equation}
\label{system}
\langle(b_1,b_2,b_3),Q-P\rangle=n'.
\end{equation}
Inserting \eqref{systeq3} into \eqref{system} yields 
\begin{gather*}
\langle(b_1,b_2,b_3),\lambda (b_1,b_2,b_3)\rangle=n'
\Rightarrow
\lambda\cdot|(b_1,b_2,b_3)|^2=n'
\Rightarrow
\lambda=\frac{n'}{|(b_1,b_2,b_3)|^2} 
\\
\Rightarrow
|Q-P|=|\lambda(b_1,b_2,b_3)|=\frac{n'}{|(b_1,b_2,b_3)|}.
\end{gather*}
As the height of the segment is $h$, we get
\begin{equation*}
\nu(h,b)\leq 1+\frac{h}{|Q-P|}=1+|(b_1,b_2,b_3)|\cdot \frac{h}{n'}.
\end{equation*}
Since $n'\geq 1$, it follows that 
\begin{equation*}
\nu(h,b)\leq 1+|(b_1,b_2,b_3)|\cdot h
\end{equation*}
which together with \eqref{nu} implies \eqref{slicing}. In particular, recalling \eqref{kappa3bound}, we get \eqref{slicingcor}.
\end{proof}
\noindent
The proof of the following lemma may be found in Section \ref{auxpfs}.
\begin{lemma}
\label{lemmaclaim}
Let $S\subset R\mathcal{S}^2$ be a spherical segment of opening angle $\theta$, radius of larger base $k$, and direction the unit vector $\beta$. For every non-zero $a=(a_1,a_2,a_3)\in\mathbb{Z}^3$, the maximal number of lattice points lying on $S$ satisfies
\begin{equation}
\label{claim}
\psi\ll \kappa(R)\cdot[1+R|a|(\theta+\varphi)]
\end{equation}
where $\varphi$ is the angle between $\beta$ and $a$, and the implied constant is absolute.
\end{lemma}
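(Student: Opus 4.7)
The plan is to enclose $S$ inside a larger spherical segment $S'$ whose direction is the rational vector $a/|a|$, so that Proposition \ref{ratsect} applies with $b=a$. The bound on $\psi(S)$ will then follow from the monotonicity $\psi(S)\leq \psi(S')$, once the height of $S'$ is controlled in terms of the opening angle $\theta$ of $S$ and the angle $\varphi$ between $\beta$ and $a$.

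First I would parametrise points of $R\mathcal{S}^2$ by their angular distance from $\beta$: the definition of the opening angle in Definition \ref{defseg2} yields that $S$ consists exactly of those points $P$ for which $\angle(\beta,\overrightarrow{OP})$ lies in some interval $[\alpha_A,\alpha_C]$ with $\alpha_C-\alpha_A=\theta/2$ (here assuming, as allowed by the reduction noted after Definition \ref{defseg}, that $S$ is contained in a single hemisphere). Next, the spherical triangle inequality applied to the unit vectors $\beta$, $a/|a|$ and $\overrightarrow{OP}/R$ gives
$$\bigl|\angle(a,\overrightarrow{OP})-\angle(\beta,\overrightarrow{OP})\bigr|\leq \angle(\beta,a)=\varphi,$$
so $\angle(a,\overrightarrow{OP})\in [\alpha_A-\varphi,\alpha_C+\varphi]$ for every $P\in S$. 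Consequently $S$ is contained in a spherical segment $S'\subset R\mathcal{S}^2$ of direction $a/|a|$ and opening angle at most $\theta+4\varphi$ (if the interval $[\alpha_A-\varphi,\alpha_C+\varphi]$ were to cross $0$, I would simply take $S'$ to be the enclosing spherical cap of direction $a/|a|$ and opening angle $\leq \theta+4\varphi$).

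Using the product-to-sum identity for the height of $S'$ yields
$$h'=R\cos(\alpha_A-\varphi)-R\cos(\alpha_C+\varphi)=2R\sin\!\left(\tfrac{\alpha_A+\alpha_C}{2}\right)\sin\!\left(\tfrac{\theta}{4}+\varphi\right)\ll R(\theta+\varphi).$$
Since $a\in\mathbb{Z}^3\setminus\{0\}$, Proposition \ref{ratsect} applied to $S'$ with $b=a$ gives
$$\psi(S)\leq \psi(S')\leq \kappa(R)\bigl(1+|a|\cdot h'\bigr)\ll \kappa(R)\bigl(1+R|a|(\theta+\varphi)\bigr),$$
which is the claimed bound \eqref{claim}, with an absolute implied constant.

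The main subtlety I anticipate is the spherical-geometric step of showing that tilting the axis from $\beta$ to $a/|a|$ enlarges the angular span of $S$ by at most $2\varphi$, together with the edge cases (the interval $[\alpha_A-\varphi,\alpha_C+\varphi]$ crossing the pole of $a/|a|$, so that $S'$ must be a cap rather than a segment; or $S$ itself failing to lie in a single hemisphere, handled by the splitting $S=S_1\cup S_2$ mentioned after Definition \ref{defseg}). Once the enclosure $S\subseteq S'$ is rigorously set up, the quantitative bound is a direct invocation of Proposition \ref{ratsect}.
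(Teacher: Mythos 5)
Your overall strategy coincides with the paper's: enclose $S$ in a segment or cap $S'$ whose direction is the rational vector $a/|a|$, then apply the slicing bound of Proposition~\ref{ratsect} after controlling the height $h'$. In fact, in its ``Case~1'' the paper constructs exactly your $S'$ (the two planes orthogonal to $a$ through $A$ and through $D$), and your identity $h'=2R\sin\big(\tfrac{\alpha_A+\alpha_C}{2}\big)\sin\big(\tfrac{\theta}{4}+\varphi\big)\ll R(\theta+\varphi)$ recovers in one line the paper's decomposition $h'=h_1+h_2$ with $h_1<R\theta/2$ and $h_2<2R\varphi$. Where you genuinely streamline is the enclosure step itself: the paper distinguishes three cases according to where $V=Ra/|a|$ falls relative to the arc $AC$ on the great circle through $U$ and $V$, appealing in Cases~2--3 to the Bourgain--Rudnick cap estimate (2.13) of \cite{brgafa}; you replace all of this with the single spherical triangle inequality $\big|\angle(a,\overrightarrow{OP})-\angle(\beta,\overrightarrow{OP})\big|\leq\varphi$, which handles every position of $V$ at once. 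That is a real simplification, and the use of \cite{brgafa}, (2.13) is avoidable because a cap may be treated as a degenerate segment in Proposition~\ref{ratsect}, giving the same bound $\ll\kappa(R)(1+|a|h_T)$ with $h_T\asymp R\theta_T^2$ by \eqref{shR}--\eqref{htheta}.

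One small repair is needed. The formula $h'=R\cos(\alpha_A-\varphi)-R\cos(\alpha_C+\varphi)$ only computes the height of $S'$ when $\alpha_A\geq\varphi$, i.e.\ when $S'$ is a genuine segment. When $\alpha_A<\varphi$ the enclosing region is the cap $\{P:\angle(a,\overrightarrow{OP})\leq\alpha_C+\varphi\}$, whose height is $R\big(1-\cos(\alpha_C+\varphi)\big)$; since $\cos(\alpha_A-\varphi)<1$ your displayed expression \emph{under}estimates this, so it is not a valid upper bound in that regime. The fix is immediate: $\alpha_A<\varphi$ forces $\alpha_C+\varphi<\theta/2+2\varphi$, hence $h'=2R\sin^2\big(\tfrac{\alpha_C+\varphi}{2}\big)\ll R(\theta+\varphi)^2\ll R(\theta+\varphi)$ because $\theta$ and $\varphi$ are bounded, and Proposition~\ref{ratsect} then gives the claim as before. (You should also say once that, replacing $a$ by $-a$ if necessary, one may assume $\varphi\leq\pi/2$, which keeps $\alpha_C+\varphi\leq\pi$ so the enclosing region really is a single cap or segment.) With those two remarks in place, the proof is complete.
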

Next, we state Dirichlet's theorem on simultaneous approximation (see \cite[proof of Lemma 2.5]{brgafa}; see also \cite[\S 11.12]{harwri}, or \cite[section II, Theorem 1A]{daschm}).
\begin{prop}[Dirichlet]
\label{dirichsim}
Given $\zeta_1,\zeta_2\in\mathbb{R}\setminus\mathbb{Q}$ and an integer $H\geq 1$, there exist $q,p_1,p_2\in\mathbb{Z}$ so that $1\leq q\leq H^2$ and
\begin{equation*}
\left|\zeta_1-\frac{p_1}{q}\right|,
\left|\zeta_2-\frac{p_2}{q}\right|
<
\frac{1}{q H}.
\end{equation*}
\end{prop}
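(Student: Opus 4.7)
The plan is to derive this via the two-dimensional Dirichlet pigeonhole argument applied to the fractional parts of multiples of $\zeta_1$ and $\zeta_2$. First I would form the collection of $H^2+1$ points
\[
P_n := (\{n\zeta_1\},\{n\zeta_2\}) \in [0,1)^2, \qquad n = 0,1,\dots,H^2,
\]
where $\{\cdot\}$ denotes the fractional part, and partition the half-open unit square into the $H^2$ half-open boxes $B_{i,j} := [i/H,(i+1)/H)\times[j/H,(j+1)/H)$ with $0\le i,j\le H-1$. Any two distinct points lying in a common box differ by strictly less than $1/H$ in each coordinate; this is the key geometric feature of the partition on which everything will rest.

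Next I would invoke the pigeonhole principle: since $H^2+1 > H^2$, some box $B_{i,j}$ must contain at least two of the points, say $P_{n_1}$ and $P_{n_2}$ with $0\le n_1 < n_2 \le H^2$. Setting $q := n_2 - n_1$ (so that $1\le q\le H^2$) and $p_i := \lfloor n_2\zeta_i\rfloor - \lfloor n_1\zeta_i\rfloor \in \mathbb{Z}$ for $i\in\{1,2\}$, a direct manipulation of the definition of fractional part gives the identity
\[
\{n_2\zeta_i\} - \{n_1\zeta_i\} = q\zeta_i - p_i,
\]
hence $|q\zeta_i - p_i| < 1/H$. Dividing through by $q>0$ yields the desired simultaneous inequalities $|\zeta_i - p_i/q| < 1/(qH)$ for $i=1,2$, and $q$ lies in the required range $1\le q\le H^2$.

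There is essentially no obstacle here: the strict inequality is automatic from the half-open partition (two distinct points in $[i/H,(i+1)/H)$ differ by strictly less than $1/H$), so the irrationality hypothesis $\zeta_1,\zeta_2\in\mathbb{R}\setminus\mathbb{Q}$ plays no role in the pigeonhole step itself. The irrationality is used implicitly when Proposition \ref{dirichsim} is subsequently invoked with $H\to\infty$ in the proof of Proposition \ref{lpseg}: it guarantees that the approximants $p_i/q$ do not collapse to a single rational for all large $H$, so that genuinely improving approximations exist. The entire argument is essentially the classical one of Dirichlet, and no auxiliary results beyond the pigeonhole principle are needed.
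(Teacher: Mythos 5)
Your argument is correct and complete: it is the classical two-dimensional Dirichlet pigeonhole proof, with the strict inequality correctly extracted from the half-open partition of $[0,1)^2$ into $H^2$ boxes and $H^2+1$ points. The paper itself gives no proof of Proposition \ref{dirichsim} — it only cites standard references (Bourgain--Rudnick, Hardy--Wright, Schmidt) — and your proof is precisely the argument found there, so there is nothing to reconcile; your side remark that irrationality plays no role in the pigeonhole step itself is also accurate.
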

\begin{lemma}
\label{trineq}
Let $v,w$ be two non-zero vectors of $\mathbb{R}^n$. Then
\begin{equation*}
\left|\frac{v}{|v|}-\frac{w}{|w|}\right|
\leq
2\frac{|v-w|}{|w|}.
\end{equation*}
\end{lemma}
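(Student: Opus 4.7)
The plan is to prove this as a short direct estimate by putting the difference over a common denominator and splitting the numerator via a telescoping trick. Specifically, I would write
\[
\frac{v}{|v|}-\frac{w}{|w|}=\frac{v\,|w|-w\,|v|}{|v|\,|w|},
\]
and then decompose the numerator as
\[
v\,|w|-w\,|v| \;=\; v\bigl(|w|-|v|\bigr) \;+\; |v|\,(v-w).
\]
This is the main idea: the first piece captures the difference of the norms, and the second captures the difference of the vectors themselves.

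Next, I would apply the triangle inequality to bound
\[
\bigl|v\,|w|-w\,|v|\bigr|\;\leq\;|v|\cdot\bigl||w|-|v|\bigr|\;+\;|v|\cdot|v-w|,
\]
so that after dividing by $|v|\,|w|$ we get
\[
\left|\frac{v}{|v|}-\frac{w}{|w|}\right|\;\leq\;\frac{\bigl||w|-|v|\bigr|+|v-w|}{|w|}.
\]
The last step is to invoke the reverse triangle inequality $\bigl||w|-|v|\bigr|\leq |v-w|$, which produces the factor of $2$ and yields the claimed bound $2|v-w|/|w|$.

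I do not anticipate any real obstacles here; the only ``choice'' is which vector to pull out of the telescoping (I chose $v$ in the first summand and $|v|$ in the second, so that the denominator $|v|$ cancels and we are left with $|w|$ alone, matching the statement). One could equally well telescope symmetrically, but the asymmetric choice is exactly what gives the asymmetric bound with $|w|$ in the denominator, as required for the application in the proof of Proposition~\ref{lpseg} where $w$ will play the role of the vector whose length one wants to keep as a lower bound.
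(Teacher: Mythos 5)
Your proposal is correct and, after clearing denominators, is the same decomposition the paper uses: dividing your identity $v\,|w|-w\,|v| = v(|w|-|v|)+|v|(v-w)$ by $|v|$ gives exactly the paper's telescoping $\frac{v}{|v|}|w|-w = \frac{v}{|v|}(|w|-|v|)+(v-w)$, and both proofs then combine the triangle inequality with the reverse triangle inequality $\bigl||w|-|v|\bigr|\leq|v-w|$. The only difference is cosmetic (common denominator versus pre-multiplying by $|w|$).
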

\noindent
The proof of Lemma \ref{trineq} is an application of the triangle inequality and is deferred to Section \ref{auxpfs}.

\begin{lemma}
\label{diophapproxlemma1}
For all vectors $\alpha\in\mathbb{R}^3$ with $\frac{\alpha_2}{\alpha_1},\frac{\alpha_3}{\alpha_1}\in\mathbb{R}\setminus\mathbb{Q}$ and $|\alpha|=1$, and for all integers $H\geq 1$, there exists $a\in\mathbb{Z}^3$ satisfying
\begin{numcases}{}
\label{syst2cond1}
|a|\leq 3H^2
\\
\label{syst2cond2}
\left|\alpha-\frac{a}{|a|}\right|
<
\frac{6\sqrt{2}}{|a|H}.
\end{numcases}
\end{lemma}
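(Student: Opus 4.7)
The plan is to construct $a$ as a Dirichlet-type simultaneous approximation to $\alpha$. First I would apply Proposition \ref{dirichsim} to $\zeta_1:=\alpha_2/\alpha_1$ and $\zeta_2:=\alpha_3/\alpha_1$, which are irrational by hypothesis; this produces integers $q,p_1,p_2$ with $1\leq q\leq H^2$ and $|q\zeta_i-p_i|<1/H$ for $i=1,2$. Take $a:=\mathrm{sgn}(\alpha_1)\cdot(q,p_1,p_2)\in\mathbb{Z}^3$ as the candidate.

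For condition \eqref{syst2cond2} I would introduce the auxiliary real vector $w:=(q/\alpha_1)\alpha=(q,q\zeta_1,q\zeta_2)$. Direct computation gives $|w|=q/|\alpha_1|$, $w/|w|=\mathrm{sgn}(\alpha_1)\alpha$, and
$$|w-\mathrm{sgn}(\alpha_1)a|^2=(q\zeta_1-p_1)^2+(q\zeta_2-p_2)^2<2/H^2.$$
Applying Lemma \ref{trineq} to the pair $w,\,\mathrm{sgn}(\alpha_1)a$ then yields $|\alpha-a/|a||\leq 2\sqrt{2}/(|a|H)$, well inside the claimed tolerance $6\sqrt{2}/(|a|H)$.

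For condition \eqref{syst2cond1} the triangle inequality gives
$$|a|\leq|w|+|w-\mathrm{sgn}(\alpha_1)a|\leq q/|\alpha_1|+\sqrt{2}/H\leq H^2/|\alpha_1|+\sqrt{2}.$$
Since $|\alpha|=1$, the largest coordinate satisfies $|\alpha_{i_0}|\geq 1/\sqrt{3}$. When $i_0=1$ this immediately gives $|a|\leq\sqrt{3}\,H^2+\sqrt{2}\leq 3H^2$ for every $H\geq 1$, and one is done.

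The main obstacle is the complementary case $|\alpha_1|<1/\sqrt{3}$: naive Dirichlet then produces $|a|\gg H^2$, so one must re-run the construction using $\alpha_{i_0}$ as the denominator. This is not free, because the required irrationality of $\alpha_j/\alpha_{i_0}$ for $j\neq i_0$ need not follow from the hypothesis $\zeta_1,\zeta_2\in\mathbb{R}\setminus\mathbb{Q}$ (for instance $\alpha_3/\alpha_2=\zeta_2/\zeta_1$ could be rational). In the problematic sub-cases one would fall back on the one-dimensional Dirichlet theorem applied to the surviving irrational ratio, combined with the rational linear relation between the remaining two coordinates; the constants $3$ and $6\sqrt{2}$ in the statement are engineered to absorb the resulting worst-case loss.
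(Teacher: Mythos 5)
Your construction mirrors the paper's: simultaneous Dirichlet approximation of $\alpha_2/\alpha_1$ and $\alpha_3/\alpha_1$, the auxiliary vector $w=(q/\alpha_1)\alpha$, and Lemma \ref{trineq}. Your application of Lemma \ref{trineq} with $w$ in the role of ``$v$'' and $\mathrm{sgn}(\alpha_1)a$ in the role of ``$w$'' is in fact slightly sharper than the paper's (which pairs $d=(\alpha_1/q)a$ with $\alpha$ and then converts $1/q$ into $3/|a|$, picking up a factor $3$); you obtain $2\sqrt{2}/(|a|H)$ directly, well inside the stated $6\sqrt{2}/(|a|H)$.

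Two remarks. First, your bound for \eqref{syst2cond1} via the triangle inequality reads $|a|\le q/|\alpha_1|+\sqrt{2}/H\le \sqrt{3}H^2+\sqrt{2}$, which exceeds $3H^2$ at $H=1$ (since $\sqrt{3}+\sqrt{2}\approx 3.15>3$); your ``for every $H\ge 1$'' is false. The paper avoids the loss by bounding coordinates: $|\alpha_i/\alpha_1|\le1$ gives $|p_i/q|\le1+1/(qH)\le2$, so $|p_i|\le2q$ and $|a|^2\le 9q^2$, hence $|a|\le 3q\le 3H^2$ exactly. Second, the ``complementary case'' worry you raise is genuine: after relabeling so that the largest coordinate is the denominator, one of the new ratios (e.g.\ $\alpha_3/\alpha_2=\zeta_2/\zeta_1$) may be rational, and Proposition \ref{dirichsim} as stated treats only irrational targets; the paper glosses over this with an unjustified ``assume $|\alpha_1|=\max$.'' But your proposed fallback --- one-dimensional Dirichlet on the lone irrational ratio plus the rational relation between the other two coordinates --- cannot rescue the lemma as stated: clearing the denominator of $\alpha_3/\alpha_2=r/s$ inflates $|a|$ by a factor depending on $r,s$, i.e.\ on $\alpha$, whereas the constants $3$ and $6\sqrt{2}$ in \eqref{syst2cond1}--\eqref{syst2cond2} are absolute, and Proposition \ref{lpseg} needs them to be. The correct repair is simpler: Dirichlet's simultaneous approximation holds for all real targets, not only irrational ones, so the two-dimensional step runs verbatim in the relabeled coordinates regardless of whether the relabeled ratios stay irrational.
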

\begin{proof}[Proof of Lemma \ref{diophapproxlemma1} assuming Lemma \ref{trineq}]
As in \cite[proof of Lemma 2.3]{brgafa}, assume $|\alpha_1|=\max (|\alpha_1|,|\alpha_2|,|\alpha_3|)$. Take $\zeta_1=\frac{\alpha_2}{\alpha_1}$, $\zeta_2=\frac{\alpha_3}{\alpha_1}$ and a large integer $H$: by Proposition \ref{dirichsim}, there exist integers $q,p_1,p_2$ so that $1\leq q\leq H^2$ and
\begin{equation*}
\left|\frac{\alpha_2}{\alpha_1}-\frac{p_1}{q}\right|,
\left|\frac{\alpha_3}{\alpha_1}-\frac{p_2}{q}\right|
<
\frac{1}{q H}.
\end{equation*}
We may assume $\alpha_1>0$ (in case $\alpha_1<0$, take $-\alpha$), and set $a=(a_1,a_2,a_3):=(q,p_1,p_2)\in\mathbb{Z}^3$. Then
\begin{gather*}
|\alpha_1|=\max (|\alpha_1|,|\alpha_2|,|\alpha_3|)
\Rightarrow
0\leq \left|\frac{\alpha_2}{\alpha_1}\right|,
\left|\frac{\alpha_3}{\alpha_1}\right|
\leq 1
\\
\Rightarrow
0\leq \left|\frac{p_1}{q}\right|,
\left|\frac{p_2}{q}\right|
\leq 1+\frac{1}{qH}\leq 2
\Rightarrow
|p_1|,|p_2|\leq 2q
\\
\Rightarrow
|a|^2=q^2+p_1^2+p_2^2\leq q^2+4q^2+4q^2=9q^2
\Rightarrow
|a|\leq 3q\leq 3H^2,
\end{gather*}
and \eqref{syst2cond1} is satisfied. We now turn to proving \eqref{syst2cond2}. We define the vector $d:=\frac{\alpha_1}{q}\cdot a\in\mathbb{R}^3$, hence (as $\alpha_1>0$)
\begin{equation*}
\frac{d}{|d|}=
\frac
{\frac{\alpha_1}{q}\cdot a}
{\frac{|\alpha_1|}{|q|}\cdot |a|}
=
\frac{a}{|a|}.
\end{equation*}
We apply Lemma \ref{trineq} with $w=\alpha$ and $v=d$, recalling that $|\alpha|=1$:
\begin{equation}
\label{fact1}
\left|\alpha-\frac{a}{|a|}\right|
=
\left|\frac{\alpha}{|\alpha|}-\frac{d}{|d|}\right|
\leq
2\frac{|\alpha-d|}{|\alpha|}
=
2|\alpha-d|.
\end{equation}
Moreover,
\begin{equation}
\begin{aligned}
\label{fact2}
|\alpha-d|
=
\left|
\alpha-\frac{\alpha_1}{q}\cdot (q,p_1,p_2)
\right|
=
\left|
\left(
\alpha_1-\frac{\alpha_1}{q}\cdot q,
\alpha_2-\frac{\alpha_1}{q}\cdot p_1,
\alpha_3-\frac{\alpha_1}{q}\cdot p_2
\right)
\right|
\\
=
|\alpha_1|
\cdot
\left|
\left(
0,
\frac{\alpha_2}{\alpha_1}-\frac{p_1}{q},
\frac{\alpha_3}{\alpha_1}-\frac{p_2}{q}
\right)
\right|
=
|\alpha_1|
\cdot
\bigg(
\left|
\frac{\alpha_2}{\alpha_1}-\frac{p_1}{q}
\right|^2
+
\left|
\frac{\alpha_3}{\alpha_1}-\frac{p_2}{q}
\right|^2
\bigg)^{1/2}
\\
<
|\alpha_1|
\cdot
\bigg(2\left(\frac{1}{q H}\right)^2\bigg)^{1/2}
=
\sqrt{2}\cdot|\alpha_1|\cdot\frac{1}{q H}
<
\sqrt{2}\cdot\frac{1}{q H}.
\end{aligned}
\end{equation}
Since $|a|\leq 3q$, we have
\begin{equation}
\label{fact3}
\frac{1}{q}\leq\frac{3}{|a|}.
\end{equation}
Combining \eqref{fact1}, \eqref{fact2} and \eqref{fact3},
\begin{equation*}
\left|\alpha-\frac{a}{|a|}\right|
\leq
2|\alpha-d|
<
2\sqrt{2}\cdot\frac{1}{q H}
\leq
6\sqrt{2}\cdot\frac{1}{|a|H}
\end{equation*}
and \eqref{syst2cond2} is satisfied.
\end{proof}


\begin{proof}[Proof of Proposition \ref{lpseg} assuming the preparatory results]
By Lemma \ref{lemmaclaim}, we have for every non-zero $a=(a_1,a_2,a_3)\in\mathbb{Z}^3$,
\begin{equation}
\label{applylemmaclaim}
\psi\ll\kappa(R)\cdot[1+R|a|(\theta+\varphi)]
\end{equation}
where $\varphi$ is the angle between $\beta$ and $a$. We are then looking for $a\in\mathbb{Z}^3$ which minimises the quantity
$
|a|(\theta+\varphi)
$.
We claim that
\begin{equation}
\label{phixclaim}
\varphi\sim\left|\beta-\frac{a}{|a|}\right| \quad\text{as }\theta\to 0
\end{equation}
(this will be shown at the end of the proof). By \eqref{applylemmaclaim} and \eqref{phixclaim},
\begin{equation}
\label{mezzo}
\psi\ll \kappa(R)\cdot\left[1+R\left(|a|\theta+|a|\left|\beta-\frac{a}{|a|}\right|\right)\right].
\end{equation}
It then suffices to bound
\begin{equation*}
|a|\theta+|a|\left|\beta-\frac{a}{|a|}\right|.
\end{equation*}
We want $a=(a_1,a_2,a_3)$ s.t. $|a|$ and $|\beta-\frac{a}{|a|}|$ are both small. We apply Lemma \ref{diophapproxlemma1} with $\alpha=\beta$: for all integers $H\geq 1$, there exists $a=(a_1,a_2,a_3)$ so that 
\begin{equation*}
|a|\theta+|a|\left|\beta-\frac{a}{|a|}\right|
<
3H^2\cdot\theta+|a|\frac{6\sqrt{2}}{|a|H}
=
3H^2\cdot\theta+\frac{6\sqrt{2}}{H}
.
\end{equation*}
The tradeoff gives us the choice $H=\big\lfloor\big(\frac{2\sqrt{2}}{\theta}\big)^{1/3}\big\rfloor=\Big\lfloor\frac{\sqrt{2}}{\theta^{1/3}}\Big\rfloor$, hence
\begin{equation*}
|a|\theta+|a|\left|\beta-\frac{a}{|a|}\right|
<
3
\left(
2\theta^{1/3}
+
2\sqrt{2}\frac{1}{\lfloor\sqrt{2}/\theta^{1/3}\rfloor}
\right)
\ll
\theta^{1/3}.
\end{equation*}
Inserting this bound into \eqref{mezzo} yields the statement of Proposition \ref{lpseg}:
\begin{equation*}
\psi
\ll
\kappa(R)\cdot[1+R\cdot\theta^{1/3}]
.
\end{equation*}
It remains to show \eqref{phixclaim}. Consider the triangle of sides $\beta,\frac{a}{|a|}$ and $\beta-\frac{a}{|a|}$, of lengths $|\beta|=1, |\frac{a}{|a|}|=1$, and $x:=|\beta-\frac{a}{|a|}|$ respectively. The angle opposite the side of length $x$ is $\varphi$, hence 
$x=2\sin(\frac{\varphi}{2})$. If we show that $\varphi\to 0$ as $\theta\to 0$, it will imply $x=2\sin(\frac{\varphi}{2})\sim 2\cdot\frac{\varphi}{2}=\varphi$; it will suffice to show $x\to 0$ as $\theta\to 0$. By Lemma \ref{diophapproxlemma1},
\begin{equation*}
x\ll\frac{1}{|a|H}.
\end{equation*}
Since $|a|\geq 1$ and we chose $H=\Big\lfloor\frac{\sqrt{2}}{\theta^{1/3}}\Big\rfloor$, it follows that $x\ll \theta^{1/3}\to 0$ as $\theta\to 0$.
\end{proof}

\section{
Proof of Theorem \ref{resultirrat}}
\label{irrational}
\subsection{Preparatory results}
The following is a three-dimensional analogue of \cite[Lemma 5.1]{maff2d}. Recall the Definitions \ref{defcap} of a spherical cap and \ref{defseg} of a spherical segment.
\begin{lemma}
\label{cone}
Let $c=c(R)>0$, with $c\to 0$ as $R\to\infty$. Fix a point $B\in R\mathcal{S}^2$, and let $\beta$ be a unit vector. Then all points $B'\in R\mathcal{S}^2$ satisfying $|\langle B-B',\beta\rangle|\leq c|B-B'|$ lie: either on the same spherical segment $S$, of opening angle $\theta=8c+O(c^3)$ and direction $\beta$; or on the same spherical cap, of radius $\ll cR$ and direction $\beta$, on $R\mathcal{S}^2$.
\end{lemma}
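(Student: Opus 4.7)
The approach is to set up coordinates so that $\beta = e_3$, and use the rotational symmetry around $\beta$ to write $B = R(\sin\phi_0, 0, \cos\phi_0)$ and $B' = R(\sin\phi'\cos\omega, \sin\phi'\sin\omega, \cos\phi')$ in spherical coordinates, with $\phi_0, \phi' \in [0,\pi]$ and $\omega \in [0,2\pi)$. Then $\langle B - B', \beta\rangle = R(\cos\phi_0 - \cos\phi')$ and $|B - B'|^2 = 2R^2\bigl(1 - \cos\phi_0\cos\phi' - \sin\phi_0\sin\phi'\cos\omega\bigr)$, so the hypothesis $|\langle B-B',\beta\rangle|^2 \le c^2|B-B'|^2$ rewrites as
\[
(\cos\phi_0 - \cos\phi')^2 \le 2c^2\bigl(1 - \cos\phi_0\cos\phi' - \sin\phi_0\sin\phi'\cos\omega\bigr).
\]

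Since $\sin\phi_0\sin\phi' \ge 0$, for each fixed $\phi'$ the right-hand side is maximised at $\omega = \pi$, giving $2c^2\bigl(1 - \cos(\phi_0+\phi')\bigr) = 4c^2\sin^2\tfrac{\phi_0+\phi'}{2}$. Combined with the identity $(\cos\phi_0 - \cos\phi')^2 = 4\sin^2\tfrac{\phi_0+\phi'}{2}\sin^2\tfrac{\phi'-\phi_0}{2}$, and cancelling $\sin^2\tfrac{\phi_0+\phi'}{2}$ (the degenerate zero case forces both $B$ and $B'$ to lie at the same $\beta$-pole and falls trivially into the cap conclusion below), the set of admissible latitudes $\phi'$ (those for which some $\omega$ satisfies the hypothesis) collapses to $\bigl|\sin\tfrac{\phi'-\phi_0}{2}\bigr| \le c$, i.e.\ $|\phi' - \phi_0| \le 2\arcsin(c) = 2c + O(c^3)$.

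To conclude, I would split by the position of $B$ relative to the two $\beta$-poles of $R\mathcal{S}^2$. If $2\arcsin(c) \le \phi_0 \le \pi - 2\arcsin(c)$, the admissible $\phi'$-range lies entirely in $[0,\pi]$ and $B'$ is confined to the spherical segment cut out by the two planes perpendicular to $\beta$ at polar angles $\phi_0 \pm 2\arcsin(c)$; by Definition \ref{defseg2}, the opening angle equals twice the polar-angle separation of the bases, namely $\theta = 2 \cdot 4\arcsin(c) = 8c + O(c^3)$, with direction $\beta$. Otherwise $\phi_0$ lies within $2\arcsin(c)$ of $0$ or $\pi$, so the admissible interval has length at most $4\arcsin(c)$ and contains that pole; then $B'$ lies in the spherical cap of direction $\beta$ centred at the pole of angular radius $\le 4\arcsin(c)$, whose chord radius is $2R\sin(2\arcsin(c)) \ll cR$. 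The only substantive technical step is the trigonometric collapse in the second paragraph; once that is in hand, the segment-versus-cap dichotomy and the extraction of the opening angle and chord radius via $\arcsin(c) = c + O(c^3)$ reduce to bookkeeping.
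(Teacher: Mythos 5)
Your proof is correct, but takes a genuinely different route from the paper's. You set up explicit spherical coordinates adapted to $\beta$, reduce the constraint to a one-variable trigonometric inequality in the polar angle $\phi'$ by noticing that maximizing the right-hand side over the azimuthal angle $\omega$ only tightens the containment, and then read off the admissible $\phi'$-interval as $|\phi' - \phi_0| \le 2\arcsin c$, from which the opening angle $8\arcsin c = 8c + O(c^3)$ and the pole-adjacent cap case follow directly. The paper instead argues synthetically: the constraint $|\cos\varphi_{B-B',\beta}|\le c$ confines $B'$ to the complement of a cone with vertex $B$ and half-angle $\arccos c$ about $\beta$; intersecting with the sphere, and working in the great-circle plane through $B$ and $\beta$ with the two generator lines $s', s''$, the region is bounded by two planes perpendicular to $\beta$, yielding the segment (resp.\ cap, when one generator leaves the sphere on the same side as $B$ or is tangent). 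Your coordinate computation is arguably cleaner and makes the $O(c^3)$ exact (it is $8\arcsin c$); the paper's construction is more in keeping with the synthetic flavor of the surrounding lemmas (e.g.\ Lemma \ref{lemmaclaim}, Lemma \ref{twoparpla}). Both correctly handle the dichotomy by the position of $B$ relative to the $\beta$-poles, and your treatment of the degenerate $\sin\frac{\phi_0+\phi'}{2}=0$ case is careful and sound.
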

\begin{proof}
The condition $|\langle B-B',\beta\rangle|\leq c|B-B'|$ means $B-B'$ and $\beta$ are close to being orthogonal, in the sense that $|\cos(\varphi_{B-B',\beta})|\leq c$, where $0\leq\varphi_{v,w}\leq\pi$ denotes the angle between two non-zero vectors $v,w\in\mathbb{R}^3$. Let $\{s_i\}_i$ be the set of straight lines through $B$ satisfying
\begin{equation*}
|\cos(\varphi_{s_i,\beta})|=c.
\end{equation*}
The lines $\{s_i\}_i$ are the generators of a cone with vertex $B$. Let $\mathcal{R}$ be the region of $\mathbb{R}^3$ delimited by this cone. We then have
\begin{equation*}
\{B'\in\mathbb{R}^3 : |\cos(\varphi_{B-B',\beta})|\leq c\}=\mathbb{R}^3\setminus\mathcal{R}.
\end{equation*}
It follows that
\begin{equation*}
\{B'\in R\mathcal{S}^2 : |\cos(\varphi_{B-B',\beta})|\leq c\}=(\mathbb{R}^3\setminus\mathcal{R})\cap R\mathcal{S}^2=:\mathcal{R}'.
\end{equation*}
We now show that $\mathcal{R}'$ is contained in either a spherical segment or cap. Let $\Pi$ be the plane containing $B$ and $\beta$ (and thus also the origin $O$). The two lines belonging to the set $\{s_i\}_i$ and lying on $\Pi$ will be denoted $s',s''$. Moreover, call $D$ the further intersection between $R\mathcal{S}^2$ and $s'$, meaning $R\mathcal{S}^2\cap s'=\{B,D\}$. Likewise, call $E$ the further intersection between $R\mathcal{S}^2$ and $s''$, meaning $R\mathcal{S}^2\cap s''=\{B,E\}$. Note that possibly one of the lines $s',s''$, say $s''$, is tangent to the sphere $R\mathcal{S}^2$, in which case $E=B$. Let $\Pi_1,\Pi_2$ be planes orthogonal to $\beta$ and through $D,E$ respectively, and denote $\mathcal{B}_1=R\mathcal{S}^2\cap\Pi_1$, $\mathcal{B}_2=R\mathcal{S}^2\cap\Pi_2$. 
By the expansion
\begin{equation*}
\arccos(c)=\frac{\pi}{2}-c+O(c^3) 
\end{equation*}
we have
\begin{equation*}
\varphi_{s',\beta}
=
\varphi_{s'',\beta}
=
\frac{\pi}{2}-c+O(c^3),
\qquad
\varphi_{s',s''}
=
\pi-\varphi_{s',\beta}-\varphi_{s'',\beta}=2c+O(c^3).
\end{equation*}
Let $D',D''$ be points on $s'$ on opposite sides of $B$, and $E',E''$ be points on $s''$ on opposite sides of $B$, so that: $\overline{BD'}=\overline{BD''}=\overline{BE'}=\overline{BE''}=3R$, $D$ lies on $s'$ between $B$ and $D'$, and $\widehat{D'BE'}=\varphi_{s',s''}=2c+O(c^3)$. There are two cases:
\begin{itemize}
\item
In case $E$ lies on $s''$ between $B$ and $E'$, we have $\mathcal{R}'\subset S$, where $S$ is the spherical segment of bases $\mathcal{B}_1, \mathcal{B}_2$. The opening angle of $S$ is
\begin{equation*}
\theta=2\cdot\widehat{DOE}=4\cdot\widehat{D'BE'}=8c+O(c^3).
\end{equation*}
\item
In case $E$ lies on $s''$ between $B$ and $E''$, or in case $E=B$, we have $\mathcal{R}'\subset T$, where $T$ is the spherical cap of direction $\beta$ and base either $\mathcal{B}_1$ or $\mathcal{B}_2$, whichever is the largest. Assume w.l.o.g. that the cap of base $\mathcal{B}_1\ni D$ is the largest. Denoting $H=R\beta\in R\mathcal{S}^2$, the radius of $T$ is
\begin{equation*}
\overline{HD}\leq\overline{BD}=\widehat{DOB}\cdot R\leq 2\cdot\widehat{D'BE'}\cdot R\ll cR.
\end{equation*}
\end{itemize}
\end{proof}

\begin{defin}
Given an integer $m$ which is the sum of three squares, define
\begin{equation*}
\widehat{\mathcal{E}}(m):=\frac{1}{\sqrt{m}}\mathcal{E}(m)\subset S^2
\end{equation*}
to be the projection of the set of lattice points on the unit sphere (cf. \cite[(1.5)]{bosaru} and \cite[(4.3)]{ruwiye}).
\end{defin}
\begin{defin}
For $\sigma>0$, define the \textbf{Riesz $\sigma$-energy} of $N$ (distinct) points $P_1,\dots,P_N$ on $S^2$ as
\begin{equation*}
E_\sigma(P_1,\dots,P_N):=\sum_{i\neq j}\frac{1}{|P_i-P_j|^\sigma}.
\end{equation*}
\end{defin}
\noindent
Bourgain, Sarnak and Rudnick computed the following precise asymptotics for the Riesz $\sigma$-energy of $\widehat{\mathcal{E}}(m)$.
\begin{prop}[Bourgain, Sarnak and Rudnick; see {\cite[Theorem 1.1]{bsr016}}, {\cite[Theorem 1.1]{bosaru}} and {\cite[Theorem 4.1]{ruwiye}}]
\label{asyriesz}
Fix $0<\sigma<2$. Suppose $m\to\infty$, $m\not\equiv 0,4,7 \pmod 8$. Then there is some $\delta>0$ so that
\begin{equation*}
E_\sigma(\widehat{\mathcal{E}}(m))=I(\sigma)\cdot N^2+O(N^{2-\delta})
\end{equation*}
where
\begin{equation*}
I(\sigma)=\frac{2^{1-\sigma}}{2-\sigma}.
\end{equation*}
\end{prop}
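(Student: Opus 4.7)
The plan is to realise $I(\sigma)N^2$ as the mean-field limit of the energy under equidistribution of $\widehat{\mathcal{E}}(m)$ on the unit sphere, and to control the singular contribution from near-by pairs by lattice point counts in spherical caps. First I would verify that the constant $I(\sigma)=2^{1-\sigma}/(2-\sigma)$ is precisely
\begin{equation*}
\int_{S^2}\int_{S^2}\frac{d\sigma_{S^2}(P)\,d\sigma_{S^2}(Q)}{|P-Q|^\sigma},
\end{equation*}
where $d\sigma_{S^2}$ denotes the normalised surface measure; passing to polar coordinates around $P$ and substituting $u=\sin(\theta/2)$, the inner integral collapses to $(2/2^\sigma)\int_0^1 u^{1-\sigma}du=2^{1-\sigma}/(2-\sigma)$, finite precisely because $\sigma<2$.

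The next step is to fix a small threshold $\eta=\eta(m)\to 0$ (to be optimised) and split the energy sum as $E_\sigma(\widehat{\mathcal{E}}(m))=\Sigma_{\mathrm{far}}+\Sigma_{\mathrm{near}}$, separating pairs with $|P_i-P_j|\geq\eta$ from those with $|P_i-P_j|<\eta$. For $\Sigma_{\mathrm{far}}$ the kernel is smooth on its domain with controlled Sobolev norms (depending polynomially on $\eta^{-1}$), so one may apply quantitative equidistribution of $\widehat{\mathcal{E}}(m)$ on $S^2$, due to Duke (and Iwaniec, Golubeva--Fomenko) via the theory of half-integral weight modular forms. The assumption $m\not\equiv 0,4,7\pmod 8$ is precisely what guarantees primitivity and hence a power-saving discrepancy; applied to the product kernel on $S^2\times S^2$, this should yield
\begin{equation*}
\Sigma_{\mathrm{far}}=I(\sigma)N^2+O\bigl(N^2\eta^{2-\sigma}\bigr)+O\bigl(N^{2-\delta_0}\eta^{-A}\bigr),
\end{equation*}
where the middle term is the deficit from removing the near-diagonal region in the reference integral, and the last is the raw equidistribution error with $A=A(\sigma)$ governed by the Sobolev order used.

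For $\Sigma_{\mathrm{near}}$ I would dyadically decompose the range $r\in[m^{-1/2},\eta]$ and, for each $P_i$, bound the number of $P_j$ with $|P_i-P_j|\asymp r$ by the spherical cap estimate of Lemma \ref{lemma2.1}, namely $\chi(\sqrt{m},r\sqrt{m})\ll m^\epsilon(1+r^2 m^{3/4})$. Summing the dyadic contributions $r^{-\sigma}\chi(\sqrt{m},r\sqrt{m})$ over all $P_i$ gives $\Sigma_{\mathrm{near}}\ll m^\epsilon N(m^{\sigma/2}+m^{3/4}\eta^{2-\sigma})$, which, combined with $N\asymp m^{1/2-\epsilon}$, is $\ll N^{2-\delta}$ for some $\delta=\delta(\sigma)>0$ once $\eta$ is chosen as a suitable small negative power of $m$ balancing this against the equidistribution error from $\Sigma_{\mathrm{far}}$.

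The main obstacle is the tension between the two regimes: the Jarnik-type cap bound of Lemma \ref{lemma2.1} is merely a power-saving substitute for the conjectural truth, and the equidistribution rate $\delta_0$ must still dominate the Sobolev blow-up $\eta^{-A}$ after the balance with $\Sigma_{\mathrm{near}}$. In the delicate regime $\sigma$ close to $2$, where the near-diagonal mass is barely integrable, the single-point equidistribution on $S^2$ is insufficient and one must exploit refined joint equidistribution of pairs on $S^2\times S^2$; this joint input, combined with the cap estimate, is the technical heart of the Bourgain--Sarnak--Rudnick argument and is where the condition $m\not\equiv 0,4,7\pmod 8$ and the deep half-integral weight theory become indispensable.
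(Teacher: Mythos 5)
The paper does not prove Proposition~\ref{asyriesz}; it is quoted as an external theorem of Bourgain, Sarnak and Rudnick (via \cite{bsr016}, \cite{bosaru} and \cite{ruwiye}), so there is no internal proof to compare against. Your identification of $I(\sigma)$ with the continuous Riesz double integral over $S^2$ is correct, and the near/far split with Duke equidistribution on one side and lattice-point counts on the other is the right skeleton. But the central quantitative step in your near-diagonal estimate is wrong. You claim the Jarnik cap bound of Lemma~\ref{lemma2.1} gives $\Sigma_{\mathrm{near}}\ll m^\epsilon N\bigl(m^{\sigma/2}+m^{3/4}\eta^{2-\sigma}\bigr)\ll N^{2-\delta}$. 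The first term arises from pairs at the minimum normalized distance $r\asymp m^{-1/2}$: you bound the number of neighbours \emph{per point} by $O(m^\epsilon)$, multiply by $N$, and weight by $r^{-\sigma}\asymp m^{\sigma/2}$. But $N\ll m^{1/2+\epsilon}$, so $Nm^{\sigma/2}\ll m^{1/2+\sigma/2+\epsilon}$, whereas $N^{2-\delta}$ can be as small as $m^{(1/2-\epsilon)(2-\delta)}$. The required inequality $1/2+\sigma/2<1-\delta/2$ forces $\sigma<1$, so your argument already fails on the range $1\leq\sigma<2$, exactly where the diagonal singularity is dangerous; the statement ``$N\asymp m^{1/2-\epsilon}$'' is also a misreading of \eqref{totnumlp}, which gives only the two one-sided bounds $m^{1/2-\epsilon}\ll N\ll m^{1/2+\epsilon}$.

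The repair is not the ``refinement'' you defer to your last paragraph but the crux: a per-point cap count throws away the global constraint that only $O(m^\epsilon)$ pairs share each small difference vector. Concretely, for fixed nonzero $v\in\mathbb{Z}^3$ the relation $\mu-\mu'=v$ with $\mu,\mu'\in\mathcal{E}(m)$ forces $\langle\mu,v\rangle=|v|^2/2$, confining $\mu$ to a single plane with at most $\kappa(\sqrt m)\ll m^\epsilon$ lattice points — the very slicing idea behind Lemma~\ref{ratlem1} and Proposition~\ref{ratsect}. Summing over the $O(d^3)$ lattice vectors of length $\leq d$, the \emph{total} number of pairs at unscaled distance $\leq d$ is $\ll d^3 m^\epsilon$, not $N$ per dyadic shell, and the near-diagonal sum becomes $\ll m^{3/2+\epsilon}\eta^{3-\sigma}$. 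Even with this sharper global input, the balance between $\eta$, the Duke power saving, and the Sobolev blow-up remains delicate as $\sigma\to2$, and managing that balance, along with the pair-correlation content you only gesture at, is what the Bourgain--Sarnak--Rudnick proof must actually supply. As written, your sketch both overclaims (presenting the raw cap bound as sufficient) and undersells (demoting the essential close-pair counting to a closing remark), so it does not constitute a valid argument.
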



\subsection{Proof of Theorem \ref{resultirrat}}
\label{sectpfirrat}
\begin{proof}[Proof of Theorem \ref{resultirrat}]
Apply Proposition \ref{mainprop}, yielding \eqref{firststep}. Let $\rho=\rho(R)$ be a parameter such that $\rho\to 0$ as $R\to\infty$. Let us split the summation on the RHS of \eqref{firststep}, applying \eqref{min}:
\begin{equation}
\label{splitsum}
\text{Var}\left(\frac{\mathcal{Z}}{\sqrt{m}}\right)
\ll
\frac{1}{N^2}
\left[
\sum_{|\langle\mu-\mu',\alpha\rangle|\leq \rho\cdot|\mu-\mu'|}
1
+
\sum_{|\langle\mu-\mu',\alpha\rangle|\geq \rho\cdot|\mu-\mu'|}
\frac{1}{\langle\mu-\mu',\alpha\rangle^2}
\right].
\end{equation}
To bound the first summation on the RHS of \eqref{splitsum}, 
we start by applying Lemma \ref{cone} with $c=\rho$, $B=\mu$ and $\beta=\alpha$: for fixed $\mu$, the condition 
\begin{equation*}
|\langle\mu-\mu',\alpha\rangle|\leq \rho\cdot|\mu-\mu'|
\end{equation*}
means the lattice point $\mu'$ must lie on a spherical segment $S_{\mu}$ of opening angle $
8\rho+O(\rho^3)$ and direction $\alpha$, or on a spherical cap $T_\mu$ of radius $\ll \rho R$ and direction $\alpha$, on $R\mathcal{S}^2$. It follows that
\begin{equation}
\begin{aligned}
\label{1sumbd}
\#\{(\mu,\mu'): |\langle\mu-\mu',\alpha\rangle|\leq \rho\cdot|\mu-\mu'|\}
\\=
\sum_{\mu}\#\{\mu' : \mu'\in T_\mu\}
+
\sum_{\mu}\#\{\mu' : \mu'\in S_\mu\}
\\
\leq 2\cdot\#\{(\mu,\mu'): \mu,\mu'\in T\}+
\sum_{\mu}
\#\{\mu': \mu'\in S_\mu\},
\end{aligned}
\end{equation}
where $T$ is the spherical cap of radius $j\rho R$ (for some large enough $j\in\mathbb{R}^+$) and direction $\alpha$. 
Recalling the notation \eqref{chi(R,s)}, 
we may write
\begin{equation*}
\#\{(\mu,\mu'): \mu,\mu'\in T\}=(\chi(R,j\rho R))^2.
\end{equation*}
If we assume $\rho=o(\frac{1}{R^{3/4}})$ (eventually we are going to choose $\rho=\frac{1}{R^{6/7}}$), we get 
\begin{equation}
\label{1sumbd1}
\#\{(\mu,\mu'): \mu,\mu'\in T\}\ll R^\epsilon
\end{equation}
by Lemma \ref{lemma2.1}.
\\
For each $\mu$, the number of lattice points inside $S_\mu$ is bounded by the maximal number of lattice points $\psi$ (recall \eqref{psi}) in a spherical segment of opening angle $8\rho+O(\rho^3)$. We apply Proposition \ref{lpseg} with $\theta=8\rho+O(\rho^3)$:
\begin{equation}
\label{1sumbd2}
\sum_{\mu}
\#\{\mu': \mu'\in S_\mu\}
\leq
\sum_{\mu}\psi
\ll
N\cdot\kappa(R)(1+R\cdot \rho^{1/3}).
\end{equation}
By substituting \eqref{1sumbd1} and \eqref{1sumbd2} into \eqref{1sumbd}, we get
the following bound for the first summation on the RHS of \eqref{splitsum}:
\begin{equation}
\begin{aligned}
\label{contrib1}
\#\{(\mu,\mu'): |\langle\mu-\mu',\alpha\rangle|\leq \rho\cdot|\mu-\mu'|\}
\ll
R^\epsilon
+
N\cdot\kappa(R)(1+R\cdot \rho^{1/3})
\\
\ll
R^\epsilon N(1+R\cdot \rho^{1/3}),
\end{aligned}
\end{equation}
where we also used \eqref{kappa3bound}.
\\
We now turn to the second summation on the RHS of \eqref{splitsum}. Let $\epsilon'>0$ and apply Proposition \ref{asyriesz} with $\sigma=2-\epsilon'$:
\begin{equation}
\begin{aligned}
\label{contrib2}
\sum_{|\langle\mu-\mu',\alpha\rangle|\geq \rho\cdot|\mu-\mu'|}
\frac{1}{\langle\mu-\mu',\alpha\rangle^2}
\leq
\sum_{|\langle\mu-\mu',\alpha\rangle|\geq \rho\cdot|\mu-\mu'|}
\frac{1}{\rho^2|\mu-\mu'|^2}
\\
\leq
\frac{1}{\rho^2}
\sum_{\mu\neq\mu'}
\frac{1}{|\mu-\mu'|^{2-\epsilon'}}
\sim
\frac{N^2}{\rho^2 R^{2-\epsilon'}}.
\end{aligned}
\end{equation}
Inserting \eqref{contrib1} and \eqref{contrib2} into \eqref{splitsum}, and recalling \eqref{totnumlp}, we deduce
\begin{equation}
\begin{aligned}
\label{deduce}
\text{Var}\left(\frac{\mathcal{Z}}{\sqrt{m}}\right)
\ll
\frac{1}{N^2}
R^\epsilon N\left(\left(1+R\cdot \rho^{1/3}\right)+\frac{N}{\rho^2 R^2}\right)
\\
\ll
\frac{1}{N^2}R^\epsilon N\left(R\cdot \rho^{1/3}+\frac{1}{\rho^2 R}\right).
\end{aligned}
\end{equation}
The optimal choice for the parameter is $\rho=\frac{1}{R^{6/7}}$, and it follows that
\begin{equation*}
\text{Var}\left(\frac{\mathcal{Z}}{\sqrt{m}}\right)
\ll
\frac{N\cdot R^{\frac{5}{7}+\epsilon}}{N^2}
\ll
\frac{1}{m^{\frac{1}{7}-\epsilon}}
.
\end{equation*}
\end{proof}
As mentioned in the Introduction, Theorem \ref{resultirrat} prescribes an unconditional bound for all energies $m$, whereas for the two-dimensional problem, an unconditional bound is only given for a {\em density one sequence} of energies (\cite[Theorem 1.5]{maff2d}), and a bound for all $m$ is given conditionally (\cite[Theorem 1.4]{maff2d}). The reason for this is the significant difference between the total number of lattice points on a sphere and on a circle (recall \eqref{totnumlp} and \eqref{lpcircles}).

\section{
Proof of Theorem \ref{resulthalfrat}}
\label{halfrational}
For lines satisfying $\frac{\alpha_2}{\alpha_1}\in\mathbb{Q}$ and $\frac{\alpha_3}{\alpha_1}\in\mathbb{R}\setminus\mathbb{Q}$, we may unconditionally improve our bound for the variance of nodal intersections (Theorem \ref{resultirrat}) by gaining on the bound for the number of lattice points in a spherical segment of direction $\alpha$ (compare Propositions \ref{lpseg} and \ref{lpsegbis}); this is because we approximate one irrational number instead of two simultaneously (compare Lemmas \ref{diophapproxlemma1} and \ref{diophapproxlemma2}).

\subsection{Diophantine approximation}
\begin{prop}[Dirichlet]
\label{dirich}
Given $\zeta\in\mathbb{R}\setminus\mathbb{Q}$ and an integer $H\geq 1$, there exist $p,q\in\mathbb{Z}$ so that $1\leq q\leq H$ and
\begin{equation*}
\left|\zeta-\frac{p}{q}\right|
<
\frac{1}{q H}.
\end{equation*}
\end{prop}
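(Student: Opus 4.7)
The plan is to apply the classical pigeonhole argument of Dirichlet. First I would consider the $H+1$ fractional parts
\[
\{0\cdot\zeta\},\ \{1\cdot\zeta\},\ \{2\cdot\zeta\},\ \ldots,\ \{H\cdot\zeta\}\in[0,1),
\]
where $\{x\}$ denotes the fractional part of $x$. Next I would partition the unit interval $[0,1)$ into $H$ half-open subintervals
\[
\left[\tfrac{j}{H},\,\tfrac{j+1}{H}\right), \qquad 0\le j\le H-1,
\]
each of length $1/H$. Since there are $H+1$ fractional parts distributed among $H$ subintervals, the pigeonhole principle yields indices $0\le k_1<k_2\le H$ such that $\{k_1\zeta\}$ and $\{k_2\zeta\}$ belong to the same subinterval.

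I would then set
\[
q:=k_2-k_1, \qquad p:=\lfloor k_2\zeta\rfloor-\lfloor k_1\zeta\rfloor,
\]
so that $1\le q\le H$, $p\in\mathbb{Z}$, and
\[
q\zeta-p=(k_2\zeta-\lfloor k_2\zeta\rfloor)-(k_1\zeta-\lfloor k_1\zeta\rfloor)=\{k_2\zeta\}-\{k_1\zeta\}.
\]
Because the two fractional parts lie in a common half-open subinterval of length $1/H$, their difference satisfies $|q\zeta-p|<\tfrac{1}{H}$. Dividing through by $q$ gives the desired estimate $|\zeta-p/q|<\tfrac{1}{qH}$.

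There is no real obstacle: the only mild point to verify is the strict inequality. This is ensured by working with half-open subintervals $[j/H,(j+1)/H)$, in which any two points differ by strictly less than $1/H$; the irrationality of $\zeta$ is used only to guarantee that no fractional part coincides with an endpoint of a subinterval in a degenerate way (and in particular the $H+1$ fractional parts are pairwise distinct, so $k_1\ne k_2$ gives $q\ge 1$). Note that the hypothesis $\zeta\in\mathbb{R}\setminus\mathbb{Q}$ is stated for convenience; the same argument works for all real $\zeta$ with a non-strict inequality, and strictness for irrational $\zeta$ follows immediately since $q\zeta-p$ cannot vanish.
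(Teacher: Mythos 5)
Your pigeonhole argument is the classical proof of Dirichlet's approximation theorem and it is correct. The paper does not supply its own proof of Proposition~\ref{dirich}: it is stated as a standard classical result, just as the simultaneous version (Proposition~\ref{dirichsim}) is stated with references but no proof. So there is no paper proof to compare against; your argument fills that gap in the canonical way and reaches the stated conclusion.

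One minor point about your closing remarks. The strict inequality $|q\zeta - p| < 1/H$ follows for \emph{every} real $\zeta$ directly from the half-open partition: two points of $\left[\tfrac{j}{H},\tfrac{j+1}{H}\right)$ differ by strictly less than $1/H$. Likewise the pigeonhole already gives two \emph{distinct} indices $0\le k_1 < k_2 \le H$, so $q\ge 1$ holds whether or not the corresponding fractional parts coincide. Hence irrationality of $\zeta$ is not actually used anywhere in the argument; it appears in the hypothesis simply because that is the regime relevant to the paper.
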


\begin{lemma}
\label{diophapproxlemma2}
Let $\alpha\in\mathbb{R}^3$ with $|\alpha|=1$ and satisfying $\frac{\alpha_2}{\alpha_1}\in\mathbb{Q}$ and $\frac{\alpha_3}{\alpha_1}\in\mathbb{R}\setminus\mathbb{Q}$. Write $\frac{\alpha_2}{\alpha_1}=\frac{u}{v}$ with $u,v\in\mathbb{Z}$ and $v>0$. Define
\begin{equation}
\label{deftau}
\tau=\tau_\alpha:=\max\left(|u|,v,\frac{1}{|\alpha_1|}\right)+1.
\end{equation}
Then for all integers $H\geq 1$, there exists $a\in\mathbb{Z}^3$ satisfying
\begin{numcases}{}
\label{syst3cond1}
|a|<\sqrt{3}\tau_\alpha^2H
\\
\label{syst3cond2}
\left|\alpha-\frac{a}{|a|}\right|
<2\sqrt{3}\cdot\frac{\tau_\alpha^2}{|a|H}.
\end{numcases}
\end{lemma}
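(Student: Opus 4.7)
The plan is to adapt the proof of Lemma \ref{diophapproxlemma1} to the present setting, exploiting the fact that only \emph{one} of the ratios is irrational so that the classical one-dimensional Dirichlet theorem (Proposition \ref{dirich}) suffices in place of the simultaneous version. Without loss of generality assume $\alpha_1 > 0$ (otherwise replace $(\alpha, a)$ by $(-\alpha, -a)$, which preserves both hypotheses and both conclusions). Apply Proposition \ref{dirich} to $\zeta = \alpha_3/\alpha_1 \in \mathbb{R}\setminus\mathbb{Q}$ with parameter $H$ to obtain integers $p, q$ with $1 \leq q \leq H$ and
\begin{equation*}
\left|\frac{\alpha_3}{\alpha_1} - \frac{p}{q}\right| < \frac{1}{qH}.
\end{equation*}

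The candidate vector is $a := (qv,\, qu,\, vp) \in \mathbb{Z}^3$, which is built so as to simultaneously encode the rational relation $\alpha_2/\alpha_1 = u/v$ and the rational approximation $p/q$ to $\alpha_3/\alpha_1$. Set the auxiliary real vector
\begin{equation*}
d := \frac{\alpha_1}{qv}\cdot a = \left(\alpha_1,\ \alpha_2,\ \frac{\alpha_1 p}{q}\right),
\end{equation*}
so that $d/|d| = a/|a|$ (the positivity of $\alpha_1/(qv)$ matters here). Then by construction
\begin{equation*}
|\alpha - d| = |\alpha_1|\cdot\left|\frac{\alpha_3}{\alpha_1} - \frac{p}{q}\right| < \frac{|\alpha_1|}{qH} \leq \frac{1}{qH},
\end{equation*}
and Lemma \ref{trineq} (with $v=d$, $w=\alpha$, $|\alpha|=1$) gives $|\alpha - a/|a|| \leq 2|\alpha - d| < 2/(qH)$.

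It remains to bound $|a|$ in terms of $\tau_\alpha$. Since $|\alpha|=1$ we have $|\alpha_3/\alpha_1| \leq 1/|\alpha_1| \leq \tau_\alpha - 1$, whence the Dirichlet bound gives $|p|/q < (\tau_\alpha - 1) + 1/(qH) \leq \tau_\alpha$, i.e.\ $|p| \leq q\tau_\alpha$. Combined with $|u|, v \leq \tau_\alpha - 1 < \tau_\alpha$, this yields
\begin{equation*}
|a|^2 = q^2(u^2+v^2) + v^2 p^2 \leq q^2\tau_\alpha^2(2 + \tau_\alpha^2) \leq 3 q^2 \tau_\alpha^4,
\end{equation*}
using $\tau_\alpha \geq 1/|\alpha_1| + 1 \geq 2$. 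Hence $|a| \leq \sqrt{3}\,q\,\tau_\alpha^2 \leq \sqrt{3}\,\tau_\alpha^2 H$, establishing \eqref{syst3cond1}; substituting $1/q \leq \sqrt{3}\tau_\alpha^2/|a|$ into the bound $|\alpha - a/|a|| < 2/(qH)$ yields \eqref{syst3cond2}.

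There is no genuine obstacle in this argument; it is a bookkeeping exercise following the template of Lemma \ref{diophapproxlemma1}. The only delicate point is the choice of the prefactor $v$ in the first two coordinates of $a$, which is what allows the components to be cleared to integers while keeping $d$ collinear with $a$ and close to $\alpha$. Once that choice is made, tracking the dependence on $\tau_\alpha$ (through the inequalities $|u|, v \leq \tau_\alpha - 1$ and $|\alpha_3/\alpha_1| \leq \tau_\alpha - 1$) is straightforward.
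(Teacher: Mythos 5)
Your proof is correct and follows essentially the same route as the paper's: the same choice $a=(qv,qu,pv)$, Dirichlet's one-dimensional theorem applied to $\alpha_3/\alpha_1$, the auxiliary vector $d=\frac{\alpha_1}{qv}a$ together with Lemma \ref{trineq}, and the same bookkeeping via $\tau_\alpha$. (One tiny slip: from $|p|/q<\tau_\alpha$ you should conclude $|p|<q\tau_\alpha$ strictly, which is what makes the strict inequality in \eqref{syst3cond1} come out; your $\leq$ is harmless but imprecise.)
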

\begin{proof}
Take $\zeta=\frac{\alpha_3}{\alpha_1}$ and a large integer $H$. By Proposition \ref{dirich}, there exist $p,q\in\mathbb{Z}$ so that $1\leq q\leq H$ and
\begin{equation*}
\left|\frac{\alpha_3}{\alpha_1}-\frac{p}{q}\right|
<
\frac{1}{q H}.
\end{equation*}
Assume $\alpha_1>0$ (in case $\alpha_1<0$, take $-\alpha$). Fix $a:=(qv,qu,pv)$ and let us show this vector satisfies both \eqref{syst3cond1} and \eqref{syst3cond2}. We have
\begin{gather*}
\left|\frac{p}{q}\right|
\leq\left|\frac{\alpha_3}{\alpha_1}\right|
+\frac{1}{qH}
<\frac{1}{\alpha_1}+1
\leq\tau
\Rightarrow
|p|<\tau q
\\
\Rightarrow
|a|^2=q^2v^2+q^2u^2+p^2v^2<\tau^2q^2+\tau^2q^2+\tau^4q^2<3\tau^4q^2
\end{gather*}
so that
\begin{equation}
\label{tau}
|a|<\sqrt{3}\tau^2q\leq\sqrt{3}\tau^2H
\end{equation}
and \eqref{syst3cond1} is verified. We now turn to proving \eqref{syst3cond2}. We define the vector $d:=\frac{\alpha_1}{qv}\cdot a$, hence (as $\alpha_1>0$)
\begin{equation*}
\frac{d}{|d|}=
\frac
{\frac{\alpha_1}{qv}\cdot a}
{\frac{|\alpha_1|}{|qv|}\cdot |a|}
=
\frac{a}{|a|}.
\end{equation*}
Apply Lemma \ref{trineq} with $w=\alpha$ and $v=d$, recalling $|\alpha|=1$:
\begin{equation}
\label{tau1}
\left|\alpha-\frac{a}{|a|}\right|
=
\left|\frac{\alpha}{|\alpha|}-\frac{d}{|d|}\right|
\leq
2\frac{|\alpha-d|}{|\alpha|}
=
2|\alpha-d|.
\end{equation}
Moreover,
\begin{equation}
\begin{aligned}
\label{tau2}
|\alpha-d|
=
\left|
\alpha-\frac{\alpha_1}{qv}\cdot a
\right|
=
\left|
\left(
\alpha_1-\frac{\alpha_1}{qv}\cdot qv,
\alpha_2-\frac{\alpha_1}{qv}\cdot qu,
\alpha_3-\frac{\alpha_1}{qv}\cdot pv
\right)
\right|
\\
=
|\alpha_1|
\cdot
\left|
\left(
0,
\frac{\alpha_2}{\alpha_1}-\frac{u}{v},
\frac{\alpha_3}{\alpha_1}-\frac{p}{q}
\right)
\right|
=
|\alpha_1|
\cdot
\left|
\left(
0,
0,
\frac{\alpha_3}{\alpha_1}-\frac{p}{q}
\right)
\right|
<
\frac{1}{q H}.
\end{aligned}
\end{equation}
By \eqref{tau},
\begin{equation}
\label{tau3}
\frac{1}{q}<\frac{\sqrt{3}\tau^2}{|a|}.
\end{equation}
Combining \eqref{tau1}, \eqref{tau2} and \eqref{tau3},
\begin{equation*}
\left|\alpha-\frac{a}{|a|}\right|
\leq
2|\alpha-d|
<
2\cdot\frac{1}{q H}
<
2\sqrt{3}\cdot\frac{\tau^2}{|a|H}
\end{equation*}
and \eqref{syst3cond2} is verified.
\end{proof}
\noindent
Recall the notation \eqref{psi} for $\psi$, the maximal number of lattice points in a spherical segment.
\begin{prop}
\label{lpsegbis}
Let $S\subset R\mathcal{S}^2$ be a spherical segment of opening angle $\theta$, radius of larger base $k$, and direction $\beta$, with $\frac{\beta_2}{\beta_1}\in\mathbb{Q}$ and $\frac{\beta_3}{\beta_1}\in\mathbb{R}\setminus\mathbb{Q}$. Then the maximal number of lattice points lying on $S$ satisfies
\begin{equation*}
\psi\ll_{\beta} \kappa(R)(1+R\cdot\theta^{1/2})
\end{equation*}
for $\theta\to 0$.
\end{prop}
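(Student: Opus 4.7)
The strategy is to imitate the proof of Proposition \ref{lpseg} from Section \ref{segmentsbis}, replacing the simultaneous Dirichlet approximation (Lemma \ref{diophapproxlemma1}) by the single-variable version (Lemma \ref{diophapproxlemma2}), which is available because only one of the ratios $\beta_i/\beta_1$ is irrational. The arithmetic input from Dirichlet improves from $|a|\leq 3H^2$ to $|a|<\sqrt{3}\tau_\beta^2 H$, and this is precisely what converts the exponent $1/3$ into $1/2$.

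In detail, I would first invoke Lemma \ref{lemmaclaim} to obtain, for every non-zero $a=(a_1,a_2,a_3)\in\mathbb{Z}^3$,
\begin{equation*}
\psi\ll \kappa(R)\bigl[1+R|a|(\theta+\varphi)\bigr],
\end{equation*}
where $\varphi$ is the angle between $\beta$ and $a$. Exactly as in the proof of Proposition \ref{lpseg}, the triangle of sides $\beta$, $a/|a|$ and $\beta-a/|a|$ yields $|\beta-a/|a||=2\sin(\varphi/2)$; provided we can ensure $\varphi\to 0$ as $\theta\to 0$, this gives $\varphi\sim |\beta-a/|a||$, and hence
\begin{equation*}
\psi\ll \kappa(R)\left[1+R\left(|a|\theta+|a|\left|\beta-\tfrac{a}{|a|}\right|\right)\right].
\end{equation*}

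Next I would minimise $|a|\theta+|a||\beta-a/|a||$ by applying Lemma \ref{diophapproxlemma2} with $\alpha=\beta$. For any integer $H\geq 1$, it produces $a\in\mathbb{Z}^3$ with $|a|<\sqrt{3}\tau_\beta^2 H$ and $|\beta-a/|a||<2\sqrt{3}\tau_\beta^2/(|a|H)$, so that
\begin{equation*}
|a|\theta+|a|\left|\beta-\tfrac{a}{|a|}\right|
<\sqrt{3}\tau_\beta^2 H\theta+2\sqrt{3}\frac{\tau_\beta^2}{H}.
\end{equation*}
The balanced choice is $H=\lfloor \theta^{-1/2}\rfloor$, which yields the bound $\ll_\beta \theta^{1/2}$, absorbing $\tau_\beta$ into the implied constant (hence the subscript $\beta$).

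Inserting this into the display for $\psi$ gives $\psi\ll_\beta \kappa(R)(1+R\theta^{1/2})$, as required. The harmless consistency check $\varphi\to 0$ as $\theta\to 0$ follows exactly as in the proof of Proposition \ref{lpseg}: Lemma \ref{diophapproxlemma2} gives $|\beta-a/|a||\ll_\beta 1/(|a|H)\leq 1/H\ll \theta^{1/2}\to 0$. There is no genuine obstacle here—the whole point is that the single-variable Dirichlet approximation loses only a factor $H$ rather than $H^2$ in the size of $|a|$, which is exactly the source of the improved exponent $1/2$ over the $1/3$ of Proposition \ref{lpseg}.
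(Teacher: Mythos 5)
Your proposal is correct and follows essentially the same route as the paper: the paper simply recalls the inequality \eqref{mezzo} (which you re-derive from Lemma \ref{lemmaclaim} together with the $\varphi\sim|\beta-a/|a||$ asymptotic), then substitutes Lemma \ref{diophapproxlemma2} in place of Lemma \ref{diophapproxlemma1} and balances with $H=\lfloor\theta^{-1/2}\rfloor$. Your explicit re-verification that $\varphi\to 0$ under the new approximation lemma is a small but worthwhile point of care that the paper glosses over by citing \eqref{mezzo} directly.
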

\begin{proof}
Recall \eqref{mezzo}:
\begin{equation*}
\psi\ll \kappa(R)\cdot\left[1+R\left(|a|\theta+|a|\left|\beta-\frac{a}{|a|}\right|\right)\right].
\end{equation*}
It then suffices to bound
\begin{equation*}
|a|\theta+|a|\left|\beta-\frac{a}{|a|}\right|.
\end{equation*}
We apply Lemma \ref{diophapproxlemma2} with $\alpha=\beta$: for all integers $H\geq 1$, there exists $a=(a_1,a_2,a_3)$ so that
\begin{equation*}
|a|\theta+|a|\left|\beta-\frac{a}{|a|}\right|
<\sqrt{3}\tau_\beta^2H\cdot\theta+2\sqrt{3}\cdot\frac{\tau_\beta^2}{H}
\ll_{\beta}
H\cdot\theta+\frac{1}{H},
\end{equation*}
with $\tau_\beta$ as in \eqref{deftau}. The tradeoff gives us the choice $H=\left\lfloor\frac{1}{\theta^{1/2}}\right\rfloor$, hence
\begin{equation}
\label{mezzo2}
|a|\theta+\left|\beta-\frac{a}{|a|}\right|\ll_{\beta} \theta^{1/2}.
\end{equation}
The statement of the present proposition follows on inserting \eqref{mezzo2} into \eqref{mezzo}.
\end{proof}

\subsection{Proof of Theorem \ref{resulthalfrat}}
\label{sectpfhalfrat}
\begin{proof}[Proof of Theorem \ref{resulthalfrat}]
We will follow the proof of Theorem \ref{resultirrat}, except the maximal number of lattice points in spherical segments of opening angle $\theta$ will be bounded via Proposition \ref{lpsegbis}. Let $\rho=\rho(R)$ be a parameter such that $\rho\to 0$ as $R\to\infty$. We need to bound the two summations on the RHS of \eqref{splitsum}. For the former, we use \eqref{1sumbd} and \eqref{1sumbd1}; we gain on the estimate \eqref{1sumbd2} by invoking Proposition \ref{lpsegbis} with $\theta=8\rho+O(\rho^3)$:
\begin{equation}
\label{1sumbd2bis}
\sum_{\mu}
\#\{\mu': \mu'\in S_\mu\}
\leq
\sum_{\mu}\psi
\ll
N\cdot\kappa(R)(1+R\cdot \rho^{1/2}).
\end{equation}
By substituting \eqref{1sumbd1} and \eqref{1sumbd2bis} into \eqref{1sumbd}, we get the following bound for the first summation on the RHS of \eqref{splitsum}:
\begin{equation}
\begin{aligned}
\label{contrib1bis}
\#\{(\mu,\mu'): |\langle\mu-\mu',\alpha\rangle|\leq \rho\cdot|\mu-\mu'|\}
\ll
R^\epsilon
+
N\cdot\kappa(R)(1+R\cdot \rho^{1/2})
\\
\ll
R^\epsilon N(1+R\cdot \rho^{1/2}).
\end{aligned}
\end{equation}
For the second summation on the RHS of \eqref{splitsum}, we have the bound \eqref{contrib2}. Inserting \eqref{contrib1bis} and \eqref{contrib2} into \eqref{splitsum}, and recalling \eqref{totnumlp}, we deduce
\begin{equation*}
\begin{aligned}
\text{Var}\left(\frac{\mathcal{Z}}{\sqrt{m}}\right)
\ll
\frac{1}{N^2}
R^\epsilon N\left(\left(1+R\cdot \rho^{1/2}\right)+\frac{N}{\rho^2 R^2}\right)
\\
\ll
\frac{1}{N^2}R^\epsilon N\left(R\cdot \rho^{1/2}+\frac{1}{\rho^2 R}\right).
\end{aligned}
\end{equation*}
The optimal choice for the parameter is $\rho=\frac{1}{R^{4/5}}$, thus
\begin{equation*}
\text{Var}\left(\frac{\mathcal{Z}}{\sqrt{m}}\right)
\ll
\frac{N\cdot R^{\frac{3}{5}+\epsilon}}{N^2}
\ll
\frac{1}{m^{\frac{1}{5}-\epsilon}}
.
\end{equation*}
\end{proof}

\section{Conditional result: proof of Theorem \ref{resultcond}}
\label{conditional}
Recall the Definitions \ref{defcap} of a spherical cap and \ref{defseg} of a spherical segment.
\begin{lemma}
\label{twoparpla}
Given $0<c<R$, fix a point $B\in R\mathcal{S}^2$, and let $\beta$ be a unit vector. Then all points $B'\in R\mathcal{S}^2$ satisfying $|\langle B-B',\beta\rangle|\leq c$ lie either on the same spherical segment, of height $2c$ and direction $\beta$, or on the same spherical cap, of height at most $2c$ and direction $\beta$, on $R\mathcal{S}^2$.
\end{lemma}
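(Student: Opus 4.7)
The plan is to rewrite the defining inequality as a slab condition on $B'$ and then intersect that slab with $R\mathcal{S}^2$. Setting $t_0 := \langle B,\beta\rangle$, the hypothesis $|\langle B-B',\beta\rangle|\leq c$ becomes $t_0 - c \leq \langle B', \beta\rangle \leq t_0 + c$; equivalently, $B'$ lies between the two parallel planes $\Pi_\pm : \langle x, \beta\rangle = t_0 \pm c$, which are both perpendicular to $\beta$ and at distance exactly $2c$ from one another. The set of admissible $B'$ is then precisely the intersection of this slab with the sphere.

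The bulk of the argument is a short case distinction based on which of $\Pi_+, \Pi_-$ actually meet $R\mathcal{S}^2$, i.e.\ on whether $|t_0 \pm c| \leq R$. In the generic case where both planes intersect the sphere, the slab carves out of $R\mathcal{S}^2$ a region which, by Definition \ref{defseg}, is a spherical segment with bases $R\mathcal{S}^2 \cap \Pi_\pm$ and height equal to the distance $2c$ between the planes; its direction is $\beta$ in the sense of Definition \ref{defseg2}.

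If instead one of the planes, say $\Pi_+$, fails to meet the sphere, then $t_0 + c > R$, and hence $t_0 > R - c$. The intersection of the slab with $R\mathcal{S}^2$ is then $\{x \in R\mathcal{S}^2 : \langle x, \beta\rangle \geq t_0 - c\}$, a spherical cap with tip $R\beta$, base $R\mathcal{S}^2 \cap \Pi_-$, direction $\beta$, and height $R - (t_0 - c) = R - t_0 + c < 2c$. The mirror argument, swapping the roles of $\Pi_+$ and $\Pi_-$, handles the case $t_0 - c < -R$ in which the cap is centred at $-R\beta$.

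I do not foresee any real obstacle here: the lemma is essentially a geometric translation of the algebraic condition. The only moving parts worth checking are the height bound in the edge cases, which follows immediately from the bound on $t_0$, and the minor notational subtlety that in the second edge case the cap has direction $-\beta$ rather than $\beta$ in the strict sense of Definition \ref{defcap}; this sign plays no role in the applications of the lemma, since only the cap's shape enters lattice-point bounds such as Lemma \ref{lemma2.1} and Conjecture \ref{brgafaconj}.
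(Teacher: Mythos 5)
Your proof is correct and takes essentially the same route as the paper: both rewrite $|\langle B-B',\beta\rangle|\leq c$ as a slab condition between two parallel planes at distance $2c$, then split into cases according to whether both planes meet $R\mathcal{S}^2$ (spherical segment of height $2c$) or only one does (spherical cap of height at most $2c$). Your extra remark about the cap being centred at $-R\beta$ when $\langle\beta,B\rangle$ is near $-R$ is a reasonable clarification the paper glosses over by taking $|\langle\beta,B\rangle|$, but it does not change the argument.
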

\begin{proof}
For a real number $\xi$, define the plane
\begin{equation*}
\Pi_{\xi}:\langle\beta,(x,y,z)\rangle=\xi,
\end{equation*}
orthogonal to $\beta$. For $-c\leq c'\leq c$, the condition
\begin{equation*}
\langle B-B',\beta\rangle=c'
\Leftrightarrow
\langle\beta,B'\rangle=\langle\beta,B\rangle-c'
\end{equation*}
means $B'$ lies on the plane $\Pi_{\langle\beta,B\rangle-c'}$. Therefore, all $B'$ satisfying $|\langle B-B',\beta\rangle|\leq c$ belong to a region $\mathcal{R}$ of $\mathbb{R}^3$ delimited by two parallel planes, namely $\Pi_{\langle\beta,B\rangle-c}$ and $\Pi_{\langle\beta,B\rangle+c}$. The distance between these two planes is $2c$. Denote
\begin{equation*}
\mathcal{R'}=\mathcal{R}\cap R\mathcal{S}^2.
\end{equation*}
In case $|\langle\beta,B\rangle|<R-c$, both $\Pi_{\langle\beta,B\rangle-c}$ and $\Pi_{\langle\beta,B\rangle+c}$ intersect $R\mathcal{S}^2$ in a circle. By Definition \ref{defseg}, $\mathcal{R'}$ is then a spherical segment, of height $2c$ and direction $\beta$.
In case $R-c\leq|\langle\beta,B \rangle|\leq R$, one of the intersections $\Pi_{\langle\beta,B\rangle-c}\cap R\mathcal{S}^2$ and $\Pi_{\langle\beta,B\rangle+c}\cap R\mathcal{S}^2$ is either empty or a single point. By Definition \ref{defcap}, $\mathcal{R'}$ is then a spherical cap, of height at most $2c$ and direction $\beta$.
\end{proof}
\begin{proof}[Proof of Theorem \ref{resultcond}]
Apply Proposition \ref{mainprop}, yielding \eqref{firststep}. Let $0<\rho<R$ be  a parameter and split the summation on the RHS of \eqref{firststep}, applying \eqref{min}:
\begin{equation}
\label{splitsum2}
\text{Var}\left(\frac{\mathcal{Z}}{\sqrt{m}}\right)\ll
\frac{1}{N^2}\cdot
\left(
\sum_{|\langle\mu-\mu',\alpha\rangle|\leq \rho}
1
+
\sum_{|\langle\mu-\mu',\alpha\rangle|\geq \rho}
\frac{1}{\langle\mu-\mu',\alpha\rangle^2}
\right).
\end{equation}
For the second summation on the RHS of \eqref{splitsum2}, we write
\begin{equation}
\label{1stsum}
\sum_{|\langle\mu-\mu',\alpha\rangle|\geq \rho}
\frac{1}{\langle\mu-\mu',\alpha\rangle^2}
\leq
\frac{1}{\rho^2}\sum_{|\langle\mu-\mu',\alpha\rangle|\geq \rho}1
\leq
\frac{N^2}{\rho^2}.
\end{equation}
For the remaining summation in \eqref{splitsum2}, we show that there are few pairs $(\mu,\mu')$ satisfying $|\langle\mu-\mu',\alpha\rangle|\leq \rho$. Fix a lattice point $\mu$ and apply Lemma \ref{twoparpla} with $\beta=\alpha$ and $c=\rho$: then $\mu'$ verifies $|\langle\mu-\mu',\alpha\rangle|\leq \rho$ if and only if it lies on a spherical segment $S_\mu$ of height $2\rho$ and direction $\alpha$, or on a spherical cap $T_\mu$ of height at most $2\rho$ and direction $\alpha$. That is to say,
\begin{equation}
\begin{aligned}
\label{2ndsum}
\#\{(\mu,\mu'): |\langle\mu-\mu',\alpha\rangle|\leq \rho\}
\leq
\sum_{\mu}\#\{\mu' : \mu'\in T_\mu\}
+
\sum_{\mu}\#\{\mu' : \mu'\in S_\mu\}
\\
\leq
2\cdot\#\{(\mu,\mu'): \mu,\mu'\in T\}+
\sum_{\mu}
\#\{\mu': \mu'\in S_\mu\},
\end{aligned}
\end{equation}
where $T$ is the spherical cap of height $2\rho$ and direction $\alpha$.
By Conjecture \ref{brgafaconj}, the maximal number of lattice points in a cap of radius $s$ of the sphere $R\mathcal{S}^2$ satisfies
$\chi(R,s)
\ll
R^\epsilon(1+\frac{s^2}{R})$.
Therefore, recalling \eqref{shR},
\begin{equation*}
\#\{\mu: \mu\in T\}
\ll
R^\epsilon\left(1+\frac{s^2}{R}\right)
\ll
R^\epsilon(1+\rho),
\end{equation*}
and it follows that
\begin{equation}
\label{2ndsumcaps}
\#\{(\mu,\mu'): \mu,\mu'\in T\}
\ll
R^\epsilon(1+\rho^2).
\end{equation}
To bound the number of lattice points in the spherical segment $S_\mu$, we may apply Corollary \ref{covercapscor2} with $h=2\rho$. We then get
\begin{equation}
\label{2ndsumsegments}
\sum_{\mu}
\#\{\mu': \mu'\in S_\mu\}
\ll
N\cdot R^\epsilon\cdot (R^{1/2}+\rho).
\end{equation}
Inserting the estimates \eqref{2ndsumcaps} and \eqref{2ndsumsegments} into \eqref{2ndsum}, and then the inequalities \eqref{2ndsum} and \eqref{1stsum} into \eqref{splitsum2} gives us
\begin{gather*}
\text{Var}\left(\frac{\mathcal{Z}}{\sqrt{m}}\right)
\ll
\frac{1}{N^2}\cdot\left(
\frac{N^2}{\rho^2}
+
R^\epsilon
\left(1+\rho^2+NR^{1/2}+N\rho\right)\right).
\end{gather*}
Taking any $\rho$ in the range $R^{\frac{1}{4}}\leq \rho\leq R^{\frac{1}{2}}$ and recalling \eqref{totnumlp}, we obtain
\begin{equation*}
\text{Var}\left(\frac{\mathcal{Z}}{\sqrt{m}}\right)
\ll
\frac{N\cdot R^{\frac{1}{2}+\epsilon}}{N^2}
\ll
\frac{1}{m^{\frac{1}{4}-\epsilon}}
.
\end{equation*}
\end{proof}

\section{Proof of auxiliary results}
\label{auxpfs}
We begin by proving Lemma \ref{rsq}.
Recall the notation for the lattice point set $\mathcal{E}$ and number $N$, and the definition \eqref{2ndmom} of $\mathcal{R}_2(m)$.
\begin{proof}[Proof of Lemma \ref{rsq}]
Recall that $r=r(t_1,t_2)$ is the covariance function \eqref{r} restricted to $\mathcal{C}$, and the notation
\begin{equation*}
r_1=\frac{\partial r(t_1,t_2)}{\partial t_1},
\qquad
r_2=\frac{\partial r(t_1,t_2)}{\partial t_2}
\qquad
\text{and}
\quad
r_{12}=\frac{\partial^2 r(t_1,t_2)}{\partial t_1\partial t_2}.
\end{equation*}
We will show
\begin{equation}
\label{rbound}
\int_0^L \int_0^L 
r^2(t_1,t_2)
dt_1dt_2
=\frac{1}{N^2}
\sum_{(\mu,\mu')\in\mathcal{E}^2}
\left|\int_0^L e^{2\pi it\langle\mu-\mu',\alpha\rangle}dt\right|^2,
\end{equation}
\begin{equation}
\label{r1bound}
\int_0^L \int_0^L 
\left(\frac{r_i(t_1,t_2)}{\sqrt{m}}\right)^2
dt_1dt_2
\leq \frac{1}{N^2}
\sum_{(\mu,\mu')\in\mathcal{E}^2}
\left|\int_0^L e^{2\pi it\langle\mu-\mu',\alpha\rangle}dt\right|^2,
\end{equation}
for $i=1,2$, and
\begin{equation}
\label{r12bound}
\int_0^L \int_0^L 
\left(\frac{r_{12}(t_1,t_2)}{m}\right)^2
dt_1dt_2
\leq \frac{1}{N^2}
\sum_{(\mu,\mu')\in\mathcal{E}^2}
\left|\int_0^L e^{2\pi it\langle\mu-\mu',\alpha\rangle}dt\right|^2.
\end{equation}
By squaring the covariance function we find
\begin{gather*}
\int_0^L \int_0^L |r(t_1,t_2)|^2dt_1dt_2
=
\\
\int_0^L \int_0^L
\frac{1}{N^2}
\sum_{(\mu,\mu')\in\mathcal{E}^2}
e^{2\pi i(t_1-t_2)\langle\mu-\mu',\alpha\rangle}
dt_1dt_2
=
\\
\frac{1}{N^2}
\sum_{(\mu,\mu')\in\mathcal{E}^2}
\left|\int_0^L e^{2\pi it\langle\mu-\mu',\alpha\rangle}dt\right|^2,
\end{gather*}
proving \eqref{rbound}. Next,
\begin{equation*}
r_1=\frac{\partial r(t_1,t_2)}{\partial t_1}
=
\frac{1}{N}\sum_{\mu\in\mathcal{E}}2\pi i\langle\mu,\alpha\rangle e^{2\pi i(t_1-t_2)\langle\mu,\alpha\rangle}
\end{equation*}
and by Cauchy-Schwartz,
\begin{gather*}
\int_0^L \int_0^L \left|\frac{r_1}{2\pi\sqrt{ m}}\right|^2dt_1dt_2
\\
=
\int_0^L \int_0^L \frac{1}{N^2}
\sum_{(\mu,\mu')\in\mathcal{E}^2}
\left\langle\frac{\mu}{|\mu|},\alpha\right\rangle \left\langle\frac{\mu'}{|\mu'|},\alpha\right\rangle 
e^{2\pi i(t_1-t_2)\langle\mu,\alpha\rangle}
e^{2\pi i(t_1-t_2)\langle\mu',\alpha\rangle}
dt_1dt_2
\\
\leq
\int_0^L \int_0^L \frac{1}{N^2}
\sum_{(\mu,\mu')\in\mathcal{E}^2} 
e^{2\pi i(t_1-t_2)\langle\mu,\alpha\rangle}
e^{2\pi i(t_1-t_2)\langle\mu',\alpha\rangle}
dt_1dt_2
\\
=
\frac{1}{N^2}
\sum_{(\mu,\mu')\in\mathcal{E}^2} \left|\int_0^L e^{2\pi it\langle\mu-\mu',\alpha\rangle}dt\right|^2,
\end{gather*}
yielding \eqref{r1bound}. For the second mixed derivative:
\begin{equation*}
r_{12}=\frac{\partial^2 r(t_1,t_2)}{\partial t_1\partial t_2}
=
\frac{1}{N}\sum_{\mu\in\mathcal{E}}(2\pi i)^2\langle\mu,\alpha\rangle^2 e^{2\pi i(t_1-t_2)\langle\mu,\alpha\rangle}
\end{equation*}
thus, again by Cauchy-Schwartz,
\begin{gather*}
\int_0^L \int_0^L \left|\frac{r_{12}}{4\pi^2 m}\right|^2dt_1dt_2
\\
=
\int_0^L \int_0^L \frac{1}{N^2}
\sum_{(\mu,\mu')\in\mathcal{E}^2}
\left\langle\frac{\mu}{|\mu|},\alpha\right\rangle^2 \left\langle\frac{\mu'}{|\mu'|},\alpha\right\rangle^2 
e^{2\pi i(t_1-t_2)\langle\mu,\alpha\rangle}
e^{2\pi i(t_1-t_2)\langle\mu',\alpha\rangle}
dt_1dt_2
\\
\leq
\frac{1}{N^2}
\sum_{(\mu,\mu')\in\mathcal{E}^2}
\left|\int_0^L e^{2\pi it\langle\mu-\mu',\alpha\rangle}dt\right|^2,
\end{gather*}
and \eqref{r12bound} follows.
\end{proof}
\noindent
Next, we prove Lemma \ref{kthetah}.
\begin{proof}[Proof of Lemma \ref{kthetah}]
We write
\begin{equation*}
S=T_2\setminus T_1
\end{equation*}
where $T_1$ and $T_2$ are spherical caps of
heights $h_1,h_2$, radii of bases $k_1,k_2$, and opening angles $\theta_1,\theta_2$ respectively; note that $h=h_2-h_1$ and $k_2=k$. Inserting $h_2\ll R$ into the relation \eqref{shR} for the cap $T_2$ yields
\begin{equation}
\label{kRh}
k_2\asymp\sqrt{R}\sqrt{h_2}.
\end{equation}
In case $h\asymp h_2$, we immediately have, by \eqref{kRh} and \eqref{htheta},
\begin{equation*}
k\theta\leq k_2\theta_2\ll\sqrt{R}\sqrt{h_2}\arcsin\left(\sqrt{h_2/2R}\right)\ll h_2\asymp h,
\end{equation*}
which proves the lemma in this case.

The remaining case is $h=o(h_2)$: here we may write $h_2=a+b$, $h_1=a-b$, and $h=2b$, with $0\leq b<a\leq R$ and $b(R)=o(a(R))$ as $R\to\infty$. By \eqref{htheta},
\begin{equation*}
\theta=\theta_2-\theta_1=4\left[\arcsin\left(\sqrt{h_2/2R}\right)-\arcsin\left(\sqrt{h_1/2R}\right)\right].
\end{equation*}
By the expansion of $\arcsin$ around $0$,
\begin{multline*}
\theta=4\sum_{n=0}^{\infty}\frac{\binom{2n}{n}}{4^n(2n+1)}\cdot\frac{(\sqrt{h_2})^{2n+1}-(\sqrt{h_1})^{2n+1}}{(\sqrt{2R})^{2n+1}}
\\=2\sqrt{2}\sum_{n=0}^{\infty}\frac{\binom{2n}{n}}{8^n(2n+1)}\cdot\frac{(\sqrt{a+b})^{2n+1}-(\sqrt{a-b})^{2n+1}}{(\sqrt{R})^{2n+1}}.
\end{multline*}
Multiplying and dividing by the quantity
\begin{equation*}
(\sqrt{a+b})^{2n+1}+(\sqrt{a-b})^{2n+1}
\end{equation*}
we obtain
\begin{multline}
\label{thetabound}
\theta\ll \sum_{n=0}^{\infty}\frac{\binom{2n}{n}}{8^n(2n+1)}\cdot\frac{(a+b)^{2n+1}-(a-b)^{2n+1}}{(aR)^{n+\frac{1}{2}}}
\\\ll
\sum_{n=0}^{\infty}\frac{\binom{2n}{n}}{8^n(2n+1)}\cdot\frac{2\binom{2n+1}{1}a^{2n}b}{(aR)^{n+\frac{1}{2}}}
=
\frac{h}{\sqrt{R}\sqrt{a}}\sum_{n=0}^{\infty}\frac{\binom{2n}{n}}{8^n}\cdot\left(\frac{a}{R}\right)^n
.
\end{multline}
By \eqref{kRh} and \eqref{thetabound}, we have
\begin{equation*}
k\theta
\ll h\sum_{n=0}^{\infty}\frac{\binom{2n}{n}}{8^n}\cdot\left(\frac{a}{R}\right)^n
\leq h\sum_{n=0}^{\infty}\frac{\binom{2n}{n}}{8^n}=h\sqrt{2}.
\end{equation*}
\end{proof}
\noindent
We finish the section with the proofs of Lemmas \ref{trineq} and \ref{lemmaclaim}.
\begin{proof}[Proof of Lemma \ref{trineq}]
Let $v,w\in\mathbb{R}^n$ be non-zero. By the triangle inequality:
\begin{equation}
\label{trian1}
\left|\frac{v}{|v|}|w|-w\right|=
\left|\frac{v}{|v|}|w|-v+v-w\right|\leq
\left|\frac{v}{|v|}|w|-v\right|+|v-w|.
\end{equation}
Applying the triangle inequality again,
\begin{equation}
\begin{aligned}
\label{trian2}
\left|\frac{v}{|v|}|w|-v\right|=
\left|\frac{v}{|v|}|w|-\frac{v}{|v|}|v|\right|=
\left|\frac{v}{|v|}\right|\cdot\left||w|-|v|\right|
\\=
\left||w|-|v|\right|\leq|v-w|.
\end{aligned}
\end{equation}
Substituting \eqref{trian2} into \eqref{trian1} we get
\begin{equation*}
\left|\frac{v}{|v|}|w|-w\right|
\leq
2\cdot|v-w|
\Rightarrow
\left|\frac{v}{|v|}-\frac{w}{|w|}\right|
\leq
2\frac{|v-w|}{|w|}.
\end{equation*}
\end{proof}

\begin{proof}[Proof of Lemma \ref{lemmaclaim}]
Fix a vector $a=(a_1,a_2,a_3)\in\mathbb{Z}^3$, and let $\varphi$ be the angle between $\beta$ and $a$ (with $a_1,a_2,a_3$ parameters).
Let $\mathcal{B}_1,\mathcal{B}_2$ be the bases of $S$ (the latter being the larger), lying on the planes $\Pi_1,\Pi_2$ respectively.
Denote $O$ the origin, 
$U=R\beta\in R\mathcal{S}^2$ 
and $V=R\frac{a}{|a|}\in R\mathcal{S}^2$. 
\\
With the same notation as Definition \ref{defseg2}, 
call $\Gamma$ the great circle through $U$ and $V$. All arcs mentioned in this proof lie on the great circle $\Gamma$. Let $\{A,B\}:=\mathcal{B}_1\cap\Gamma$, $\{C,D\}:=\mathcal{B}_2\cap\Gamma$ (so that $\overline{AV}<\overline{BV}$ and $\overline{CV}<\overline{DV}$). 
As the opening angle of the spherical segment $S$ is 
\begin{equation*}
\theta=\widehat{AOC}+\widehat{BOD}=2\cdot\widehat{AOC},
\end{equation*}
and the radius of the circle $\Gamma$ is $R$, we have $\overset{ \frown}{AC}=R\cdot\widehat{AOC}=\frac{R\theta}{2}$.
There are three cases:
\begin{itemize}
\item
\underline{Case 1: 
$\overset{\frown}{UV}<\overset{\frown}{UA}$}.
\\
We shall consider a new spherical segment $S'$, of direction $\frac{a}{|a|}$, and containing $S$; let $S'$ be delimited by the following two planes: $\Pi_1'$ is defined to be orthogonal to $a$, and $A\in\Pi_1'$, while $\Pi_2'$ is defined to be orthogonal to $a$, and $D\in\Pi_2'$.
Denote $\psi$ and $\psi'$ the number of lattice points in $S$ and in $S'$ respectively. Then we have
\begin{equation}
\label{step1}
\psi\leq \psi'.
\end{equation}
Since the direction of $S'$ is the rational vector $\frac{a}{|a|}$, we may use Proposition \ref{ratsect}:
\begin{equation}
\label{step2}
\psi'\leq \kappa(R)\cdot(1+|a|\cdot h'),
\end{equation}
with $\kappa(R)$ as in Definition \ref{kappa} and $h'$ the height of $S'$. To estimate $h'$, we start by considering $S'$ as the disjoint union of two spherical segments $S_1,S_2$ as follows. The plane $\Pi_3'$ is defined to be orthogonal to $a$, with $C\in\Pi_3'$. Let $S_1$ be the segment delimited by $\Pi_1',\Pi_3'$; let $S_2$ be the segment delimited by $\Pi_3',\Pi_2'$. If we denote $h_1$ and $h_2$ the heights of $S_1,S_2$ respectively, then $h'=h_1+h_2$. We have $h_1<\overset{\frown}{AC}=\frac{R\theta}{2}$, and we will now show $h_2<2R\varphi$, hence
\begin{equation*}
h'=h_1+h_2\ll R(\theta+\varphi)
\end{equation*}
which together with \eqref{step1} and \eqref{step2} yield \eqref{claim}. It remains to prove $h_2<2R\varphi$: denote $W$ the point satisfying
\begin{equation*}
\{D,W\}=\Pi_2'\cap\Gamma.
\end{equation*}
Then $\overset{\frown}{CW}$ is an arc on $\Gamma$. We have $\widehat{CDW}=\widehat{UOV}=\varphi$, since $\overline{CD}\perp \overline{OU}$ and $\overline{DW}\perp \overline{OV}$. The height $h_2$ of $S_2$ is less than
\begin{equation*}
\overset{\displaystyle{ \frown}}{CW}=R\cdot\widehat{COW}=2R\cdot\widehat{CDW}=2R\varphi.
\end{equation*}

\item
\underline{Case 2:
$\overset{\frown}{UA}\leq\overset{\frown}{UV}\leq\overset{\frown}{UC}$}.
\\
Denote $\Pi'$ the plane orthogonal to $a$ and containing $D$. The spherical cap $T$ delimited by $\Pi'$ has direction $\frac{a}{|a|}$ and contains $S$. Therefore, the number $\psi$ of lattice points in $S$ cannot exceed the number in $T$, which we will denote $\chi$:
\begin{equation}
\label{step1bis}
\psi\leq \chi.
\end{equation}
Since the direction of $T$ is the rational vector  $\frac{a}{|a|}$, we may use \cite[(2.13)]{brgafa}: as the opening angle of $T$ is $\widehat{VOD}$, we have
\begin{equation}
\label{step2bis}
\chi
\ll \kappa(R)\cdot\left[1+R|a|(\widehat{VOD})^2\right].
\end{equation}
We need to estimate $\widehat{VOD}$.
\begin{gather*}
\widehat{VOD}
=
\widehat{VOU}+\widehat{UOD}
=
\widehat{VOU}+\widehat{UOC}
=
\widehat{VOU}+\widehat{VOU}+\widehat{VOC}
\\
\leq
2\widehat{VOU}+\widehat{AOC}
=
2\varphi+\theta/2.
\end{gather*}
The latter inequality holds because we are assuming $\overset{\frown}{UA}\leq\overset{\frown}{UV}\leq\overset{\frown}{UC}$. By \eqref{step1bis} and \eqref{step2bis}, we find
\begin{gather*}
\psi
\ll \kappa(R)\cdot\left[1+R|a|(\widehat{VOD})^2\right]
\leq \kappa(R)\cdot\left[1+R|a|(2\varphi+\theta/2)^2\right]
\\
\ll
\kappa(R)\cdot\left[1+R|a|(\theta+\varphi)\right],
\end{gather*}
hence \eqref{claim}.

\item
\underline{Case 3:
$\overset{\frown}{UC}<\overset{\frown}{UV}$}.
\\
Consider the cap $T$ of Case 2, of direction $\frac{a}{|a|}$ and containing $S$. We have \eqref{step1bis}, \eqref{step2bis} and, since $\overset{\frown}{UC}<\overset{\frown}{UV}$,
\begin{equation}
\label{step3}
\widehat{VOD}
=
\widehat{VOU}+\widehat{UOD}
=
\widehat{VOU}+\widehat{UOC}
<
\widehat{VOU}+\widehat{VOU}
=
2\varphi.
\end{equation}
By \eqref{step1bis}, \eqref{step2bis} and \eqref{step3},
\begin{gather*}
\psi
\ll \kappa(R)\cdot\left[1+R|a|(\widehat{VOD})^2\right]
< \kappa(R)\cdot\left[1+R|a|(2\varphi)^2\right]
\\
\ll
\kappa(R)\cdot\left[1+R|a|(\theta+\varphi)\right],
\end{gather*}
implying \eqref{claim}.
\end{itemize}
\end{proof}

\section*{Acknowledgements}
This work was carried out as part of the author's PhD thesis at King's College London, under the supervision of Igor Wigman. The author's PhD is funded by a Graduate Teaching Scholarship, Department of Mathematics. The author wishes to thank Igor Wigman for his invaluable guidance, remarks and corrections, and for his availability. The author wishes to thank Ze{\'e}v Rudnick for suggesting this very interesting problem, and for helpful communications and corrections. The author would like to thank an anonymous referee for helpful remarks and suggestions.

\addcontentsline{toc}{section}{References}
\bibliographystyle{plain}
\bibliography{bibfile}

\end{document}